\def\gam{\textbf{\textrm{SG}}^{\tau, \infty}_{\mu,\nu, s}(\R^{2n})}
\def\pig{\Pi_{\mu,\nu,s}^{\tau,\infty}(\R^{3n})}
\def\Im{\mathop{\rm Im}\nolimits}
\def\Re{\mathop{\rm Re}\nolimits}
\def\Im{\mathop{\rm Im}\nolimits}
\def\Re{\mathop{\rm Re}\nolimits}
\def\R{\mathbb R}
\def\N{\mathbb N}
\def\Z{\mathbb Z}
\def\ds{\displaystyle}
\newcommand\dslash{d\llap {\raisebox{.9ex}{$\scriptstyle-\!$}}}
\newcommand{\beqsn}{\arraycolsep1.5pt\begin{eqnarray*}}
\newcommand{\eeqsn}{\end{eqnarray*}\arraycolsep5pt}
\newcommand{\beqs}{\arraycolsep1.5pt\begin{eqnarray}}
\newcommand{\eeqs}{\end{eqnarray}\arraycolsep5pt}
\newtheorem{Th}{Theorem}[section]
\newtheorem{Rem}[Th]{Remark}
\newtheorem{Ex}[Th]{Example}
\newtheorem{Lemma}[Th]{Lemma}
\newtheorem{Def}[Th]{Definition}
\newtheorem{Prop}[Th]{Proposition}
\def\pxi{\langle \xi \rangle}
\def\px{\langle x \rangle}
\def\fas{e^{i \langle x-y, \xi \rangle}}
\def\pd{\langle D \rangle}
\def\der{\partial_{\xi}^{\alpha}\partial_{x}^{\beta}}
\renewcommand{\section}%
   {\setcounter{equation}{0}\@startsection {section}{1}{\z@}{-3.5ex plus -1ex
  minus -.2ex}{2.3ex plus .2ex}{\Large\bf}}
\title{Schr\"odinger-type equations in Gelfand-Shilov spaces}
\author[Ascanelli]{Alessia Ascanelli}
\address{Alessia Ascanelli\\
Dipartimento di Matematica ed Informatica\\Universit\`a di Ferrara\\
Via Machiavelli 30\\
44121 Ferrara\\
Italy}
\email{alessia.ascanelli@unife.it}
\author[Cappiello]{Marco Cappiello}
\address{Marco Cappiello\\
Dipartimento di Matematica ``G. Peano" \\Universit\`a di Torino\\
Via Carlo Alberto 10\\
10123 Torino\\
Italy}
\email{marco.cappiello@unito.it}
\begin{document}

\def\thefootnote{}
\footnote{ \textit{2010 Mathematics Subject Classification}: 35Q41, 35S05, 46F05 \\
\textit{Keywords and phrases}: Schr\"odinger type equations, Gelfand-Shilov spaces, infinite order pseudodifferential operators}

\begin{abstract}
We study the initial value problem for Schr\"odinger-type equations with initial data presenting a certain Gevrey regularity and an exponential behavior at infinity. We assume the lower order terms of the Schr\"odinger operator depending on $(t,x)\in[0,T]\times\R^n$ and complex valued. Under a suitable decay condition as $|x|\to\infty$ on the imaginary part of the first order term and an algebraic growth assumption on the real part, we derive global energy estimates in suitable Sobolev spaces of infinite order and prove a well posedness result in Gelfand-Shilov type spaces. We also discuss by examples the sharpness of the result.
\end{abstract}

\maketitle

\markboth{\sc Schr\"odinger equation in Gelfand-Shilov classes}{\sc A.~Ascanelli, M. Cappiello}

\noindent
R\'ESUM\'E.
Nous \'etudions le probl\`eme de Cauchy pour des \'equations de Schr\"odinger avec des donn\'ees initiales pr\'esentent une r\'egularit\'e de type Gevrey et un comportement exponentiel a l'infini. Nous supposons les coefficients des terms d'ordre inferieur d\'ependant de $(t,x)\in[0,T]\times\R^n$ et \`a valeurs complexes. Sous une condition de d\'ecroissance convenable pour $|x| \to \infty$ pour la partie imaginaire  du terme du premier ordre et une hypot\`ese de croissance alg\'ebrique sur la partie r\'eelle, nous obtenons des estimations de l'\'energie globales dans des espaces de Sobolev d'ordre infini et prouvons que le probl\`eme de Cauchy est bien pos\'e dans des espaces de type Gelfand-Shilov. Nous discutons \'egalement par des exemples de l'optimalit\'e du r\'esultat.

\section{Introduction}
We consider for $(t,x)\in[0,T]\times\R^n$ the Cauchy problem 
\beqs
\label{CP}
\begin{cases}
P(t,x,\partial_t,\partial_x)u(t,x)=f(t,x)\cr
u(0,x)=g(x) 
\end{cases}
\eeqs
where
\begin{eqnarray}\label{P}
P(t,x,\partial_t,\partial_x)&=&\partial_t-i\triangle_x +\sum_{j=1}^{n}a_j(t,x)\partial_{x_j}+b(t,x)
\\
\nonumber
&=&\partial_t-i\triangle_x +A(t,x,\partial_x)
\end{eqnarray}
is a partial differential operator with complex valued coefficients $a_j$ and $b$, and $\triangle_x=\sum_{j=1}^n\partial_{x_j}^2$, as usual. The equation $Pu=f$ with $P$ in \eqref{P} is known in literature as a ``Schr\"odinger-type equation''.


 It is well-known that when the coefficients $a_j, b$ are real valued, smooth and uniformly bounded the Cauchy problem \eqref{CP} is $L^2$-well-posed, while if $a_j$ are complex valued suitable decay conditions for $|x|\to\infty$ are needed on the imaginary part of the coefficients in order to obtain either $L^2$ or $H^{\infty}$-well posedness (see \cite{I1} and a recent generalization \cite{ABZ3} to evolution equations with evolution degree $p\geq 2$). In  \cite{D, I2, M2, KB} convenient assumptions on the decay of the imaginary parts of the coefficients $a_j$ are given in order to obtain well-posedness of \eqref{CP} in $L^2(\R^n)$ and in $H^\infty(\R^n)$ as well as in the uniform Gevrey classes $\gamma^s(\R^n)$ of all functions $ f \in C^\infty(\R^n)$ such that
\begin{equation}\label{Gevrey}
\sup_{x \in \R^n} \sup_{\alpha \in \N^n}C^{-|\alpha|}(\alpha!)^{-s}|\partial_x^\alpha f(x)|<\infty
\end{equation}
for some $C>0$, see \cite{KB}. According to the decay in $x$ assumed for $\Im a_j(t,x),$ we may register or not a loss of derivatives of the solution with respect to the initial data. Also the more general situation where the term $\Delta_x$ in \eqref{P} is replaced by a term of the form $a(t)\Delta_x$ for a possibly vanishing real valued coefficient $a(t)$ has been recently considered, see \cite{ACic, ACC, CRe1, CReisraele} and the references therein.

A wide literature extending these kind of results to the so-called $p$-\textit{evolution equations} (i.e. anisotropic evolution equations with evolution degree $p\geq 2$) with real characteristics  has been recently developed, see for instance 
\cite{AB, ABbis, ABZ,AB2, CC} and the references therein.
Despite the precise requirements on the decay at infinity of $\textrm{Im}\,  a_j(t,x)$, all the above mentioned results do not give any information on the behavior of the solution for $|x| \to \infty$. 
This is mainly due to the fact that, apart from the precise assumption on the decay of $
\textrm{Im}\, a_j(t,x)$ for $|x| \to \infty$, the lower order terms in \eqref{P} are assumed to belong to the standard H\"ormander classes. Namely, $\textrm{Re}\, a_j$ and the derivatives of $a_j$ are simply assumed to be uniformly bounded in $(t,x)$.

 
In the recent paper \cite{scncpp4} we have considered in the case $n=1$ a class of $p$-evolution operators including \eqref{P} as a particular case, and proved that the Cauchy problem \eqref{CP}, \eqref{P} is well-posed in the Schwartz spaces $\mathscr{S}(\R^n)$, $\mathscr{S}'(\R^n)$ by deriving an energy estimate for the solution in the weighted Sobolev spaces
\begin{equation}\label{weightedSobolev}
H^{m}(\R^n)=\left\{u \in \mathscr{S}'(\R^n) \vert\ \|u \|_{H^m}=\|\px^{m_{2}}\pd^{m_{1 }}u\| <\infty\right\}, \quad m=(m_1,m_2),
\end{equation}
where $\|\cdot\|$ stands for the $L^2$-norm and we denote by $\pd^{m_1}$ the Fourier multiplier with symbol $\pxi^{m_1}:=(1+|\xi|^2)^{m_1/2}$. Notice that for $m_{2}=0$ we recapture the standard Sobolev spaces and that the following identities hold:
\begin{equation}\label{S}
\bigcap_{m \in \R^2}H^{m}(\R^n)= \mathscr{S}(\R^n), \qquad \qquad \bigcup_{m \in \R^2 }H^{m}(\R^n)= \mathscr{S}'(\R^n).
\end{equation}
In particular, for a Schr\"odinger operator of the form
$$P(t,x,\partial_t,\partial_x)=  \partial_t-i\,\partial^2_x + a(t,x)\partial_{x}+b(t,x),$$
assuming that
\begin{equation}\label{decayaj}
|\partial_x^\beta a(t,x)| \leq C_\beta \px^{-1-|\beta|}, \qquad \beta \in \N, \, x \in \R
\end{equation}
and 
$$
|\partial_x^\beta b(t,x)| \leq C_\beta \px^{-|\beta|} \qquad \beta \in \N, \, x \in \R 
$$
for some positive constant $C_\beta$, we proved that there exists $\delta>0$ such that for all $m=(m_{1},m_{2}) \in \R^2$,
$f\in C([0,T];H^{m}(\R))$ and $g\in H^{m}(\R)$ there is a unique solution
$u\in C([0,T];H^{(m_1,m_2-\delta)}(\R))$ which satisfies the following energy
estimate:
\beqs
\label{E}
\|u(t,\cdot)\|^2_{H^{(m_1,m_2-\delta)}}\leq C\left(\|g\|^2_{H^m}+
\int_0^t\|f(\tau,\cdot)\|^2_{H^m} \,d\tau\right)\qquad
\forall t\in[0,T],
\eeqs
for some $C=C(m)>0$.
\\
The main novelty in the result above is the existence of a unique solution with the same regularity as the Cauchy data but with a different behavior at infinity (either a loss of decay or an increase in growth depending on the sign of $m_2$). The result  has been proved using pseudodifferential operators in the \textbf{SG} classes, which are defined as follows. Given $m=(m_1,m_2)\in\R^2$ we define
 $$\textbf{\textrm{SG}}^{m}(\R^{2n})=\{ p(x,\xi) \in C^{\infty}(\R^{2n})\ \vert\sup_{(x,\xi) \in \R^{2n}}\pxi^{-m_{1}+|\alpha|}
\px^{-m_{2}+|\beta|}|\der p(x,\xi)| <\infty,\ \forall \alpha, \beta \in \N^n \}.$$
These classes have been defined in \cite{Co, Pa} and employed in a large number of papers involving differential operators with polynomially growing coefficients, see for instance \cite{AsCa1, AsCa2, scncpp4, CPP2, CPP3, Co}.


In the present paper we want to adapt the techniques used in \cite{scncpp4} to study the problem \eqref{CP} in arbitrary dimension $n \geq 1$ and assuming a weaker condition on the behavior at infinity of the imaginary parts of the coefficients of the lower order terms, namely

\beqsn
|\partial_x^\beta ( \Im a_j)(t,x)|\leq \ds C_\beta\px^{-\sigma-|\beta|} \qquad (t,x)\in[0,T]\times\R^n, \beta \in \N^n, \quad 1\leq j\leq n
\eeqsn
and 
\beqsn
|\partial_x^\beta (\Re a_j)(t,x)|+|\partial_x^\beta b(t,x)|\leq \ds C_\beta \px^{1-\sigma-|\beta|} , \qquad (t,x)\in[0,T]\times\R^n, \beta \in \N^n, \quad 1\leq j\leq n,
\eeqsn
for some $\sigma \in (0,1)$, with $C_\beta = C^{|\beta|+1} \beta!^{s_o}$ for some $s_o >1$ and some $C>0$ independent of $\beta$. In fact, the choice of $\sigma \in (0,1)$ leads in general to study the problem \eqref{CP} in Gevrey-type spaces, cf. \cite{CRe1, KB}, hence it is natural to assume Gevrey regularity of the $a_j$ and $b$, that is $C_\beta$ as above. We are going to prove that the Cauchy problem \eqref{CP} \eqref{P} admits a unique solution in suitable Gelfand-Shilov classes, see Theorem \ref{main} here below.
\\
More to the point, we recall that, fixed $s >1, \theta>1$ the Gelfand-Shilov space $\mathcal S_{s}^\theta(\R^n)$ is defined as the space of all functions $f \in C^{\infty}(\R^n)$ satisfying the condition
\begin{equation}\label{GSexp}
\sup_{x \in \R^n} \sup_{\alpha, \beta \in \N^n}A^{-|\alpha|}B^{-|\beta|}(\alpha!)^{-\theta}(\beta!)^{-s}|x^\beta \partial_x^\alpha f(x)|<\infty 
\end{equation}
for some positive constants $A,B$ independent of $\alpha, \beta$, or the equivalent condition
\begin{equation}\label{GSexp2}
\sup_{x \in \R^n} \sup_{\alpha \in \N^n}C^{-|\alpha|}(\alpha!)^{-\theta}e^{\epsilon |x|^{\frac1{s}}}| \partial_x^\alpha f(x)|<\infty 
\end{equation}
for some positive $C, \epsilon$. This space has been introduced in the book \cite{GS}. Later on, a projective version, denoted by $\Sigma_s^\theta(\R^n),$ has been defined by assuming \eqref{GSexp2} to hold for every $C, \epsilon >0$, cf. \cite{Pi}. Functional properties and different characterizations of these spaces and of their dual spaces were then studied in \cite{CCK, Mit, Pi, PPV}. Comparing \eqref{GSexp} with \eqref{Gevrey} we notice that $\mathcal{S}_s^\theta(\R^n)$ is a subset of $\gamma^\theta(\R^n)$ with an additional condition on the behavior at infinity. These spaces possess convenient properties for what concerns the action of Fourier transform and this makes them a suitable functional setting for pseudodifferential operators of infinite order, namely with symbols $a(x,\xi)$ admitting an exponential growth at infinity. Thanks to these properties,  Gelfand-Shilov spaces have been employed in the study of the Cauchy problem for hyperbolic equations, cf. \cite{AsCa2, CPP2, CPP3}. More recently, some papers treating Schr\"odinger equations with real valued coefficients in these spaces appeared, see \cite{CN, CW}. In the case of lower order terms with complex coefficients, the choice of Cauchy data in Gelfand-Shilov spaces intersects another field of investigation, namely the study of the smoothing effect produced by exponential decay of the data on the Gevrey regularity of the solution to the Schr\"odinger equation, see \cite{K,KT, Mizu}. The aim and the method used in the latter works is considerably different from ours since the authors approach the problem \eqref{CP} from a microlocal point of view by use of wave front sets and their results are formulated as pointwise statements with respect to $t>0$. No global energy estimates on $[0,T]$ appear in the above mentioned works. 
\\
\indent
The main goal in this paper is to derive such estimates and to identify a functional setting in which the Cauchy problem \eqref{CP} is well posed. With this purpose, we need to consider other Gelfand-Shilov type spaces, namely considering functions which are Gevrey regular but may grow exponentially at infinity, that is admitting \eqref{GSexp2} to hold for negative $\epsilon$. In fact, pointwise estimates of $u(t,x)$ given in \cite{K} for $t>0$ show that the solution may present an exponential growth as $|x|\to\infty$. In order to introduce these new spaces and to derive energy estimates in this new functional setting, it is convenient to introduce a suitable scale of related weighted Sobolev spaces. Namely, fixed $s >1, \theta >1$, $m = (m_1, m_2) \in \R^2$, $\rho= (\rho_1, \rho_2)\in \R^2 ,$  we set
$$
H^m_{\rho,s,\theta}(\R^n)=\left\{u \in \mathscr{S}'(\R^n) \vert\ \left\| u\right\|_{H^m_{\rho,s,\theta}}:=\left\| \Pi_{m,\rho,s,\theta}u \right\| <\infty\right\},
$$
where $\Pi_{m,\rho,s,\theta}$ denotes the operator 
\begin{equation}\label{pigr}
\Pi_{m,\rho,s,\theta}=\langle \cdot \rangle^{m_2} \langle D\rangle^{m_1} \exp( \rho_2 \langle \cdot \rangle^{1/s}) \exp( \rho_1 \langle D\rangle^{1/\theta}).
\end{equation}
The spaces $H^m_{\rho,s,\theta}(\R^n)$ are Hilbert spaces endowed with the inner product $(\cdot, \cdot)_{H^m_{\rho,s,\theta}}$. Notice that for $\rho=(0,0)$ they reduce to the Sobolev spaces $H^m(\R^n)$ defined by \eqref{weightedSobolev}.
Moreover, we have that $H^m_{\rho, s,\theta}(\R^n) \subset H^{m'}_{\rho', s',\theta'}(\R^n)$ if $s \leq s', \theta \leq \theta', m_j \geq m'_j, \rho_j \geq \rho'_j, j=1,2$. By Riesz theorem the dual space $((H^m_{\rho, s,\theta}(\R^n))' $ coincides with $H^{-m}_{-\rho, s,\theta}(\R^n)$ for every $m,\rho \in \R^2, s>1, \theta >1.$ 
Notice that we have
$$\mathcal{S}^\theta_s (\R^n) =  \bigcup_{\stackrel{\rho \in \R^2}{\rho_j>0,j =1,2}} H^m_{\rho,s,\theta}(\R^n) \qquad \textit{and} \qquad
\Sigma_s^\theta (\R^n) = \bigcap_{\rho_j>0,j =1,2}H^m_{\rho,s,\theta}(\R^n)$$
for all $m \in \R^2$.

Now we can also introduce the spaces
$$\widetilde{\mathcal{S}}_s^\theta(\R^n) = \bigcup_{\rho_1>0, \rho_2 \in \R}H^m_{\rho,s,\theta}(\R^n)$$ and
$$\widetilde{\Sigma}_s^\theta(\R^n) = \bigcap_{\rho_1>0, \rho_2 \in \R}H^m_{\rho,s,\theta}(\R^n).$$

Given these preliminaries, the main result of the paper reads as follows.

\begin{Th}\label{main}
Let $s_0>1, \sigma \in (0,1)$ such that $s_0 < 1/(1-\sigma)$ and let $P(t,x,\partial_t,\partial_{x})$ be an operator of the form \eqref{P} with $a_j$ and $b$ continuous with respect to $t$ and satisfying for all $(t,x)\in[0,T]\times\R^n$, $\beta \in \N^n$ and $1\leq j\leq n$ the following conditions:
\beqs
\label{imm}
|\partial_x^\beta ( \Im a_j)(t,x)|&\leq& \ds C^{|\beta|+1}\beta!^{s_0}\px^{-\sigma-|\beta|}, 
\\
\label{re}
|\partial_x^\beta ( \Re a_j)(t,x)|&\leq& \ds C^{|\beta|+1}\beta!^{s_0}\px^{1-\sigma-|\beta|}
\\
\label{reb}
|\partial_x^\beta  b(t,x)|&\leq& \ds C^{|\beta|+1}\beta!^{s_0} \px^{1-\sigma-|\beta|}, 
\eeqs
for some positive constant $C$ independent of $\beta$.
Let moreover $f\in C([0,T]; H^m_{\rho, s,\theta}(\R^n))$ and $g\in H^m_{\rho, s,\theta}(\R^n)$ for some $s \in (s_0, 1/(1-\sigma)), \theta >s_0$ and $\rho=(\rho_1,\rho_2),m=(m_1,m_2) \in \R^2$. Then there exists $\bar \delta=\bar \delta (s, \rho_2)>0$ such that the Cauchy problem \eqref{CP} admits a unique solution $u\in C([0,T];H^m_{(\rho_1,\rho_2-\bar \delta), s,\theta}(\R^n))$
which satisfies:
\begin{equation}\label{enest}
\|u(t)\|^2_{H^m_{(\rho_1, \rho_2-\bar \delta),s,\theta}}\leq C_s\left(\|g\|_{H^m_{\rho,s,\theta}}^2+\int_0^t \|f(\tau)\|_{H^m_{\rho,s,\theta}}^2 d\tau \right),
\end{equation}
for  $t\in [0,T]$ and for some $C_s>0$. In particular, the Cauchy problem \eqref{CP} is well posed in $\widetilde{\mathcal{S}}^\theta_s(\R^n)$ and in $\widetilde{\Sigma}^\theta_s(\R^n)$.
\end{Th}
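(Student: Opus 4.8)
The plan is to run the standard energy method for Schrödinger-type equations, but performed inside the $\mathbf{SG}$-pseudodifferential calculus of infinite order adapted to the scale $H^m_{\rho,s,\theta}$. First I would conjugate the operator $P$ by a suitable elliptic infinite-order pseudodifferential operator. Because the troublesome term is the first-order part $A(t,x,\partial_x)$ with $\Im a_j$ decaying like $\px^{-\sigma}$, the conjugating factor should be of the form $e^{\Lambda(t,x,D)}$ where $\Lambda$ has symbol comparable to $\Lambda(t,x,\xi)\sim M(t)\,\omega(x)\,\px^{\text{(small positive order)}}$ with $\omega$ essentially a primitive of $\px^{-\sigma}$, so that $\omega(x)\sim \px^{1-\sigma}$; the requirement $s_0<1/(1-\sigma)$, i.e. $1-\sigma<1/s_0$, is exactly what guarantees that such a symbol of order $1-\sigma$ stays within the Gevrey-$s_0$ symbol class and generates a bounded operator between the spaces $H^m_{\rho,s,\theta}$ with $s\in(s_0,1/(1-\sigma))$. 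The commutator $[-i\Delta_x,\Lambda]$ then produces a first-order term whose real part, by the sharp Gårding inequality (in its $\mathbf{SG}$/weighted form), dominates $2\,\Re\big(A(t,x,D)\big)$ up to terms bounded on $L^2$, provided $M(t)$ is chosen large enough; this is the usual Kumano-go–Taniguchi / Ichinose trick. The loss $\bar\delta=\bar\delta(s,\rho_2)$ in the exponential decay weight is precisely $\sup_t \Lambda$'s contribution, i.e. the amount of $e^{-\bar\delta\px^{1/s}}$ absorbed by the conjugation.

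The key steps, in order. (1) Develop (or cite from the preliminary sections) the calculus: composition, adjoints, and $L^2$- and $H^m_{\rho,s,\theta}$-continuity of the relevant $\mathbf{SG}$-operators of infinite order, together with the sharp Gårding inequality in this class. (2) Construct $\Lambda$ and show that $e^{\Lambda}$ is invertible with inverse $e^{-\Lambda}$ modulo a regularizing ($\mathbf{SG}^{-\infty}$) remainder, and that conjugation maps $C([0,T];H^m_{\rho,s,\theta})$ data to $C([0,T];H^m_{(\rho_1,\rho_2-\bar\delta),s,\theta})$ data. (3) Set $v=e^{\Lambda}u$, write the conjugated Cauchy problem $(\partial_t - i\Delta + \widetilde A(t,x,D) + \widetilde b(t,x,D))v = \widetilde f$, and compute $\frac{d}{dt}\|v(t)\|_{H^m_{\rho',s,\theta}}^2$, where $\rho'=(\rho_1,\rho_2-\bar\delta)$. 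Using that $-i\Delta$ is skew-adjoint, that $\Delta_x$'s $H^m_{\rho',s,\theta}$-realization commutes with $-i\Delta$, and the Gårding bound, obtain $\frac{d}{dt}\|v\|^2 \le C(\|v\|^2 + \|\widetilde f\|^2)$. (4) Grönwall gives the energy estimate for $v$; transferring back through $e^{-\Lambda}$ gives \eqref{enest}. (5) Existence: solve the conjugated problem by the standard Hilbert-space energy argument (approximation by regularized operators, a priori bound, weak compactness, passage to the limit); uniqueness follows from the same energy estimate applied to the difference of two solutions with $f=g=0$. (6) Finally, since $\widetilde{\mathcal S}^\theta_s=\bigcup_{\rho_1>0,\rho_2\in\R}H^m_{\rho,s,\theta}$ and $\widetilde\Sigma^\theta_s=\bigcap_{\rho_1>0,\rho_2\in\R}H^m_{\rho,s,\theta}$, and the loss $\bar\delta$ affects only the $\rho_2$-component, well posedness in these two spaces is immediate from \eqref{enest}.

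The main obstacle I expect is Step (2)–(3): making the conjugation precise in the infinite-order calculus. One must verify that $e^{\Lambda(t,x,D)}$ is a genuine pseudodifferential operator with controlled symbol (this is delicate because $\Lambda$ has positive order $1-\sigma$, so $e^{\Lambda}$ has exponential symbol growth and only the scale $H^m_{\rho,s,\theta}$ — not ordinary Sobolev spaces — makes sense as its mapping target), and that the symbol of the conjugated first-order operator $\widetilde A = e^{\Lambda}Ae^{-\Lambda}$ plus the commutator term $e^{\Lambda}[-i\Delta,e^{-\Lambda}]$ has the right sign structure up to $L^2$-bounded errors. The sharp constants here — the size of $M(t)$ needed, and hence $\bar\delta$ — come out of balancing $\Re\widetilde A$ against the Gårding remainder, and one must check the Gevrey orders close up: the condition $s_0<1/(1-\sigma)$ together with $s_0<s<1/(1-\sigma)$ and $\theta>s_0$ is exactly the compatibility needed for the symbolic calculus of Gevrey-$s_0$ symbols of order $1-\sigma$ to act on $H^m_{\rho,s,\theta}$. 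A secondary technical point is handling the $x$-growth of $\Re a_j$ and $b$ (of order $\px^{1-\sigma}$), which is absorbed because $1-\sigma$ is strictly below the "elliptic" order $2$ of $\Delta_x$, i.e. these are genuinely lower-order perturbations in the $\mathbf{SG}$ bookkeeping; this is what allows them to be swept into the $C\|v\|^2$ term of Grönwall without extra decay hypotheses.
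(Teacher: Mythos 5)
Your high-level plan --- conjugate by an exponential operator $e^{\Lambda(t,x,D)}$, run the $L^2$ energy method with the sharp G{\aa}rding inequality, lift through $\Pi_{m,\rho,s,\theta}$, and transfer back --- matches the paper's structure, but the $\Lambda$ you describe cannot work at the crucial point. You propose $\Lambda(t,x,\xi)\sim M(t)\,\omega(x)$ with $\omega$ a primitive of $\px^{-\sigma}$, so $\Lambda\sim M(t)\px^{1-\sigma}$ depending on $x$ only. The conjugation of $-i\Delta$ then produces a first-order term with symbol $-2\sum_j(\partial_{x_j}\Lambda)\xi_j\approx -2M(t)(1-\sigma)\px^{-\sigma-1}(x\cdot\xi)$, which is sign-indefinite: it changes sign with $x\cdot\xi$. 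It therefore cannot dominate the equally sign-indefinite $-\sum_j \Im a_j(t,x)\xi_j$; the sharp G{\aa}rding inequality never applies, and your Step (3) does not close.

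The missing ingredient is the paper's Lemma \ref{lemmastar}: a $\xi$-dependent (through the direction $\omega=\xi/|\xi|$) symbol $\lambda(x,\xi)$ of $\textbf{SG}$-order $(0,1/s)$ built so that the sign is exact. Concretely, $\lambda_1(x,\xi)=\int_0^{x\cdot\omega}g_1(x-\tau\omega)\,d\tau$ with $g_1(x)=M\px^{-1+1/s}$ satisfies $\sum_j(\partial_{x_j}\lambda_1)\xi_j=|\xi|g_1(x)$ identically; glueing $-\lambda_1$ with a complementary piece $-\lambda_2$ via a cut-off in $x\cdot\omega/\px$ gives $\sum_j(\partial_{x_j}\lambda)\xi_j\leq -M\px^{1/s-1}|\xi|$, which dominates $\sum_j\Im a_j\xi_j$ precisely when $1/s-1\geq -\sigma$, i.e.\ $s\leq 1/(1-\sigma)$: this, and not a symbol-class constraint, is where the threshold actually originates. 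The paper's full $\Lambda=k(t)\px_h^{1-\sigma}+\lambda(x,\xi)$ then has the $k(t)\px_h^{1-\sigma}$ piece serving only to absorb the lower-order remainders coming out of the conjugation calculus (through $-k'(t)\px_h^{1-\sigma}\geq 0$); its own contribution to $\sum(\partial_{x_j}\Lambda)\xi_j$ is itself sign-indefinite but small for $h$ large and is swallowed by the $\lambda$-term. Moreover, $\lambda$ being bounded in $\xi$ is exactly what guarantees the loss lands on $\rho_2$ only and leaves $\rho_1$ untouched, as claimed in \eqref{enest}. Without this direction-dependent construction and the sign identity of Lemma \ref{lemmastar}, your argument has a genuine gap.
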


\begin{Rem}
We notice that the loss $\bar \delta$ depends in general on $s$ and on $\rho_2$ whereas it is independent of $\theta$ and $\rho_1.$ This means that we obtain a solution with the same (Gevrey) regularity as the initial data but with a \emph{worse} behavior at infinity. In particular, as it will be clear from the proof in Section 4, in general $\rho_2 -\bar \delta$ may be negative, which means that the solution may admit an exponential growth for $|x| \to \infty$ even if the data decay to $0$ exponentially. This phenomenon had been already observed in \cite{K,KT}. 
We also notice that, thanks to the exponential decay of the data, we do not have any loss of Gevrey regularity as in \cite{CRe1, CReisraele, KB}. Moreover, we stress the fact that the coefficients of the lower order terms in our paper are not uniformly bounded as in \cite{CRe1, CReisraele, KB} but they can admit an algebraic growth in $x$; on the other hand, in order to use the $\textrm{\textbf{SG}}$ calculus, we need to assume also some conditions on the behavior of the derivatives of $a_j$ and $b$, which however are quite natural. 
\end{Rem}

\begin{Rem} By the technique used in the present paper we can show that the result of Theorem \ref{main} still holds in the critical case $s=1/(1-\sigma)$, but only locally in time, that is the solution $u(t,x)$ satisfiying \eqref{enest} exists in general only on a small enough subinterval $[0,T^\ast]\subseteq [0,T]$, see Remark \ref{critical} below for further details.
\end{Rem}

\begin{Rem}
The modification of the behavior at infinity of the solution stated in Theorem \ref{main} is not only a consequence of the method used; in Section 5 of the present paper we give some examples of solutions to \eqref{CP} which really present a loss of decay or an increase of growth with respect to the data $f, g.$ The same examples show that the critical threshold $s=1/(1-\sigma)$ in Theorem \ref{main} is sharp.
\end{Rem}
Summing up, Theorem \ref{main} evidentiates a new phenomenon occurring in the Cauchy problem for Schr\"odinger-type equations which is alternative to the loss of regularity found in the previous literature, cf. \cite{CRe1, CReisraele, KB}, and which appears when we fix precise assumptions on the behavior at infinity of the Cauchy data. It would be interesting to inquire if  it is possible to obtain intermediate results between Theorem \ref{main} and the result in  \cite[Theorem 1.1]{KB}, that is a solution which presents a loss in the spaces $H^m_{(\rho_1, \rho_2), s,\theta}(\R^n)$ both with respect to $\rho_1$ and $\rho_2$. At present this issue remains a challenging open problem for the authors.

The paper is organized as follows. In Section  \ref{sec2} we present some aspects of the theory of Gelfand-Shilov spaces and of pseudodifferential operators of infinite order. We stress the fact that the specific calculus for these operators is new and interesting per se. Nevertheless, in order to address the reader as soon as possible to the proof of Theorem \ref{main}, we decided to postpone and detail the calculus in an Appendix at the end of the paper. In Section \ref{sec3} we introduce the change of variable needed to reduce the Cauchy problem \eqref{CP} to a new problem for which we can derive an energy estimate in Section \ref{sec4}. From this we can go back to our original problem and prove Theorem \ref{main}. The paper is completed by a discussion of the critical case $s=1/(1-\sigma)$ in Remark \ref{critical} and by Section 5, where we give explicit examples showing the phenomena stated in Theorem \ref{main} and we discuss the optimality of the results.

\section{Function spaces and pseudodifferential operators of infinite order} \label{sec2}
In the next sections we will need some continuity properties and composition theorems for pseudodifferential operators of infinite order. In this section we only state the crucial properties and the theorems that we need in Section 4. The proofs of these technical statements are reported in the Appendix at the end of the paper, where we detail the complete calculus.\\
 In the following, given a Hilbert space $H$, we shall denote by $( \cdot, \cdot )_H$ and by $\| \cdot \|_H$ the inner product and the corresponding norm on $H$. 
 We will occasionally use the notation $e_1(1,0)$, $e_2=(0,1)$ for the vectors of the canonical basis of $\R^2$.

\subsection{Function spaces}
Before introducing pseudodifferential operators of infinite order, we recall some basic properties of Gelfand-Shilov spaces  and of the Sobolev spaces $H^m_{\rho,s,\theta}(\R^n)$ defined in the Introduction. First of all we observe that, fixed $s >0, \theta >0, A >0, B>0$ we denote by $\mathcal{S}_{s,B}^{\theta, A}(\R^n)$ the Banach space of all functions $f \in C^\infty(\R^n)$ such that 
\begin{equation}\label{GSnorm}\| f \|_{s,\theta,A, B}: = \sup_{\alpha, \beta \in \N^n} \sup_{x \in \R^n} \frac{|x^\beta \partial^\alpha f(x)|}{A^{|\alpha|}B^{|\beta|}(\alpha!)^\theta(\beta!)^s} < \infty 
\end{equation} endowed with the norm $\| \cdot \|_{s,\theta,A, B}$. We have
the following relations
$$\mathcal{S}_{s}^{\theta}(\R^n) = \bigcup_{A>0,B>0 } \mathcal{S}_{s,B}^{\theta, A}(\R^n),\qquad\Sigma^{\theta}_{s}(\R^n)= \bigcap_{A>0, B>0}\mathcal{S}_{s,B}^{\theta, A}(\R^n).$$
It is immediate to observe that the following inclusions hold:
$$\Sigma_s^\theta(\R^n) \hookrightarrow \mathcal{S}^\theta_s(\R^n) \hookrightarrow \Sigma_{s+\varepsilon}^{\theta+\varepsilon}(\R^n) $$
 $$(\Sigma_{s+\varepsilon}^{\theta+\varepsilon})'(\R^n)  \hookrightarrow (\mathcal{S}_s^\theta)'(\R^n) \hookrightarrow (\Sigma_s^\theta)'(\R^n)$$
for every $\varepsilon >0$.
In the sequel of the paper we shall need to consider two other Gelfand-Shilov spaces whose elements satisfy intermediate estimates between those of $\Sigma^\theta_s(\R^n)$ and those of $\mathcal{S}^\theta_s(\R^n)$. These spaces do not appear in the main Theorem \ref{main} but are instrumental to set up the calculus for our pseudodifferential operators.
Namely, we set
$$\mathscr{S}_{s}^{\theta}(\R^n)= \bigcup_{A>0}\bigcap_{B>0} \mathcal{S}_{s,B}^{\theta, A}(\R^n)$$
and 
$$\widetilde{\mathscr{S}}_{s}^{\theta}(\R^n)= \bigcap_{A>0}\bigcup_{B>0} \mathcal{S}_{s,B}^{\theta, A}(\R^n).$$
In terms of exponential estimates \eqref{GSexp2}, the elements of $\mathscr{S}_{s}^{\theta}(\R^n)$ (resp. $\widetilde{\mathscr{S}}_{s}^{\theta}(\R^n)$) satisfy \eqref{GSexp2} for some $C>0$ and for every $\epsilon>0$ (resp. for some $\epsilon>0$ and for every $C>0$). We also observe that 
$$ \Sigma^{\theta}_{s}(\R^n) \subset \mathscr{S}_{s}^{\theta}(\R^n) \subset \mathcal{S}_{s}^{\theta}(\R^n)$$ and 
$$ \Sigma^{\theta}_{s}(\R^n) \subset \widetilde{\mathscr{S}}_{s}^{\theta}(\R^n) \subset \mathcal{S}_{s}^{\theta}(\R^n).$$
According to their definition the spaces $\widetilde{\mathscr{S}}_{s}^{\theta}(\R^n)$ and $\mathscr{S}_{s}^{\theta}(\R^n)$ can be equipped with a natural topology starting from the Banach spaces $\mathcal{S}_{s,B}^{\theta, A}(\R^n)$ and considering inductive limit with respect to one of the constants $A$ or $B$ and the projective limit with respect to the other or viceversa.
In the sequel we shall assume $s >1, \theta>1$ since all our results hold only under this condition. 
In the following we shall denote by $(\mathcal{S}^\theta_s)'(\R^n)$, $(\Sigma^\theta_s)'(\R^n),$ $(\mathscr{S}_{s}^{\theta})'(\R^n)$, $(\widetilde{\mathscr{S}}_{s}^{\theta})'(\R^n)$ the dual spaces of $\mathcal{S}^\theta_s(\R^n)$, $\Sigma^\theta_s(\R^n)$, $\mathscr{S}_{s}^{\theta}(\R^n)$, $\widetilde{\mathscr{S}}_{s}^{\theta}(\R^n)$.  

The spaces $\mathscr{S}_{s}^{\theta}(\R^n)$ and $\widetilde{\mathscr{S}}_{s}^{\theta}(\R^n)$ can also be expressed in terms of the Sobolev spaces $H^m_{\rho,s,\theta}(\R^n)$. Namely, denoting $\rho=(\rho_1, \rho_2)$ we have:
$$\mathscr{S}_{s}^{\theta}(\R^n) = \bigcup_{\rho_1 >0}\bigcap_{\rho_2 >0}H^m_{\rho,s,\theta}(\R^n),$$
$$\widetilde{\mathscr{S}}_{s}^{\theta}(\R^n) = \bigcap_{\rho_1 >0}\bigcup_{\rho_2 >0}H^m_{\rho,s,\theta}(\R^n)$$
for every $m \in \R^2.$
Concerning the action of Fourier transform $\mathscr{F}$ we have the following isomorphisms:
$$\mathscr{F}: \mathcal{S}_s^\theta(\R^n) \rightarrow \mathcal{S}_\theta^s(\R^n),$$
$$\mathscr{F}: \Sigma_s^\theta(\R^n) \rightarrow \Sigma_\theta^s(\R^n),$$
$$\mathscr{F}: \mathscr{S}_s^\theta(\R^n) \rightarrow \widetilde{\mathscr{S}}_\theta^s(\R^n),$$
$$\mathscr{F}: \widetilde{\mathscr{S}}_s^\theta(\R^n) \rightarrow \mathscr{S}_\theta^s(\R^n)$$
and analogous mapping properties on the dual spaces. Moreover $\mathscr{F}$ maps continuously $H^m_{\rho, s,\theta}(\R^n)$ into $H^{m'}_{\rho', \theta,s}(\R^n)$ where $m'=(m_2,m_1)$ and $\rho' =(\rho_2,\rho_1).$

\subsection{Pseudodifferential operators of infinite order}

\begin{Def}
Fixed $C >0, c >0$, let $\mu, \nu, s, \tau \in \R$ such that $1< \mu \leq s$ and $\nu >1$. We shall denote by $\textbf{\textrm{SG}}_{\mu,\nu,s}^{\tau,\infty}(\R^{2n};C, c)$ the Banach space of all functions $a(x,\xi) \in C^\infty(\R^{2n})$ satisfying the following estimates:
\begin{equation}\label{doubleinfinite}
\sup_{\alpha, \beta \in \N^n} \sup_{(x,\xi) \in \R^{2n}}
C^{-|\alpha|-|\beta|} (\alpha !)^{-\mu}(\beta!)^{-\nu} \pxi^{-\tau+|\alpha|} \px^{|\beta|}  \exp\left[-c(|x|^{\frac{1}{s}}) \right] \left| \der a(x,\xi) \right| < +\infty.
\end{equation}
We set $\textbf{\textrm{SG}}_{\mu,\nu, s}^{\tau,\infty}(\R^{2n}) = \lim\limits_{\stackrel{\longrightarrow}{C,c \to \infty}}\textbf{\textrm{SG}}_{\mu,\nu, s}^{\tau,\infty}(\R^{2n};C, c)$ endowed with the inductive limit topology.
\end{Def}

In the following we shall also consider symbols with finite orders.

\begin{Def}
Fixed $C >0, m=(m_1, m_2) \in \R^2$, we shall denote by $\textbf{\textrm{SG}}_{\mu,\nu}^m(\R^{2n};C)$ the Banach space of all functions $a(x,\xi) \in C^\infty(\R^{2n})$ satisfying the following estimates:
\begin{equation}\label{doublefinite}
\sup_{\alpha, \beta \in \N^n} \sup_{(x,\xi) \in \R^{2n}}
C^{-|\alpha|-|\beta|} (\alpha !)^{-\mu}(\beta!)^{-\nu} \pxi^{-m_1+|\alpha|} \px^{-m_2+|\beta|}  \left| \der a(x,\xi) \right| < +\infty
\end{equation}
and set $\textbf{\textrm{SG}}_{\mu,\nu}^{m}(\R^{2n}) = \lim\limits_{\stackrel{\longrightarrow}{C \to \infty}}\textbf{\textrm{SG}}_{\mu,\nu}^{m}(\R^{2n};C)$.
\end{Def}
Obviously we have $\textbf{\textrm{SG}}_{\mu,\nu}^{m}(\R^{2n}) \subset \textbf{\textrm{SG}}_{\mu,\nu,s}^{m_1, \infty}(\R^{2n})$ for every $s >1.$ In the case $\mu=\nu,$ we shall denote by $\textbf{\textrm{SG}}_{\mu, s}^{\tau,\infty}(\R^{2n})$ and $\textbf{\textrm{SG}}_{\mu}^{m}(\R^{2n})$ the classes $\textbf{\textrm{SG}}_{\mu,\mu, s}^{\tau,\infty}(\R^{2n})$ and $\textbf{\textrm{SG}}_{\mu,\mu, s}^{\tau,\infty}(\R^{2n})$ respectively.
\\

For a symbol $a \in \textbf{\textrm{SG}}_{\mu, \nu, s}^{\tau,\infty}(\R^{2n})$ we can consider the pseudodifferential operator 
\begin{equation}\label{pseudo}
a(x,D)u(x)= \int_{\R^n} e^{i \langle x,\xi \rangle} a(x,\xi) \hat {u}(\xi) \, \dslash \xi, \qquad u \in \mathscr{S}(\R^n),
\end{equation}
where we denote $\dslash \xi = (2\pi)^{-n} d\xi$ and where $\langle\ ,\ \rangle$ stands for the scalar product in $\R^n.$
\\

The first step is to analyse the continuity properties of pseudodifferential operators on the infinite order Sobolev spaces defined above.
First of all, we recall that any operator of the form \eqref{pseudo} with symbol in $\textbf{\textrm{SG}}_{\mu, \nu}^{m'}(\R^{2n})$  is continuous on $\mathscr{S}(\R^n)$ and it extends to a continuous map on $\mathscr{S}'(\R^n)$ and from $H^{m}(\R^n)$ into $H^{m-m'}(\R^n)$ for every $m \in \R^2$, cf. \cite{Co, Pa}.
Moreover, if $a \in  \textbf{\textrm{SG}}_{\mu, \nu}^{m'}(\R^{2n})$, then $a(x,D)$ is continuous from $H^m_{\rho, s,\theta}(\R^n)$ to $H^{m-m'}_{\rho, s,\theta}(\R^n)$ for every $m,\rho \in \R^2$ and if $\min\{ s,\theta\} >\mu+\nu-1$, cf. Theorem \ref{doubleconj}.
Concerning operators of infinite order, we have the following result, see the Appendix for the proof.

\begin{Prop}\label{gencontinforder}
Let $\mu, \nu, \tau, s \in \R$ with $1< \mu < s, \nu >1$ and let $a \in \textbf{\textrm{SG}}_{\mu,\nu,s}^{\tau, \infty}(\R^{2n})$. Then $a(x,D)$ is linear and continuous on  $\mathscr{S}_{s}^\theta(\R^n)$ and it extends to a continuous map on $(\mathscr{S}_{s}^\theta)'(\R^n)$ for every $\theta \geq \nu$.
\end{Prop}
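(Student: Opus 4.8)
The plan is to prove continuity of $a(x,D)$ on $\mathscr{S}_s^\theta(\R^n)$ by establishing quantitative seminorm estimates, using the characterization $\mathscr{S}_s^\theta(\R^n) = \bigcup_{A>0}\bigcap_{B>0}\mathcal{S}_{s,B}^{\theta,A}(\R^n)$. So fix $u \in \mathscr{S}_s^\theta(\R^n)$: there is $A_0>0$ such that $u \in \mathcal{S}_{s,B}^{\theta,A_0}(\R^n)$ for every $B>0$. We must show $a(x,D)u$ lies in the same space, i.e. produce $A_1>0$ (depending on $A_0$ and on the symbol constants $C,c$) so that $a(x,D)u \in \mathcal{S}_{s,B'}^{\theta,A_1}(\R^n)$ for every $B'>0$. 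The first step is to pass, via Fourier transform, to a cleaner formulation: since $\mathscr{F}:\mathscr{S}_s^\theta(\R^n)\to\widetilde{\mathscr{S}}_\theta^s(\R^n)$ is an isomorphism, it is equivalent (and technically convenient, given that $\hat u$ enters the oscillatory integral) to estimate the derivatives of $a(x,D)u$ directly in the form \eqref{pseudo}.

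Next I would differentiate under the integral sign and split the work into the two families of estimates defining the Gelfand-Shilov seminorm \eqref{GSnorm}: controlling $\partial_x^\gamma$ of $a(x,D)u$ and controlling multiplication by $x^\beta$. For the $x$-derivatives, Leibniz's rule distributes $\partial_x^\gamma$ between the factor $e^{i\langle x,\xi\rangle}$ (producing a polynomial $\xi^{\gamma'}$), the symbol $\partial_x^{\gamma-\gamma'}a(x,\xi)$, and $\hat u(\xi)$. The symbol derivatives are bounded by $C^{|\gamma-\gamma'|+1}(\gamma-\gamma')!^\mu \pxi^{\tau+|\gamma-\gamma'|}\exp[c\,|x|^{1/s}]$ from \eqref{doubleinfinite} (here the $\px^{|\beta|}$ in the denominator of \eqref{doubleinfinite} only helps); the key point is that the $x$-derivatives of the symbol cost factorials of order $\mu < s \le \theta$, hence are absorbable into $(\gamma!)^\theta$. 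For the multiplication by $x^\beta$, the standard device is to write $x^\beta e^{i\langle x,\xi\rangle} = D_\xi^\beta e^{i\langle x,\xi\rangle}$ and integrate by parts, transferring $\partial_\xi^\beta$ onto $a(x,\xi)\hat u(\xi)$; by Leibniz this produces $\partial_\xi^{\beta'}a \cdot \partial_\xi^{\beta-\beta'}\hat u$. The $\xi$-derivatives of the symbol cost $(\beta')!^\nu\pxi^{\tau-|\beta'|}$, which is harmless since $\nu>1$ can be absorbed into the (arbitrarily large) constant $B'$ we are allowed, while $\partial_\xi^{\beta-\beta'}\hat u(\xi)$ decays rapidly because $\hat u \in \widetilde{\mathscr{S}}_\theta^s$. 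The exponential factor $\exp[c\,|x|^{1/s}]$ coming from the symbol is exactly matched by the exponential decay $\exp[-\epsilon|x|^{1/s}]$ available in the $\mathcal{S}_{s,B}^{\theta,A_0}$-estimate of $u$ for every $\epsilon$ — this is precisely why the space $\mathscr{S}_s^\theta$ (projective in $B$, i.e. arbitrarily good $\epsilon$) is the right domain, matching the finite, fixed constant $c$ in the symbol class.

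The main technical obstacle is the bookkeeping of the multiple factorial factors: one must check that after combining Leibniz expansions in both $x$ and $\xi$, the sums $\sum_{\gamma'\le\gamma}\binom{\gamma}{\gamma'}$ and $\sum_{\beta'\le\beta}\binom{\beta}{\beta'}$ converge (contributing only exponential-in-$|\gamma|,|\beta|$ constants) and that the products $(\gamma-\gamma')!^\mu (\gamma')!^{\,?}$ and analogous $\xi$-side products stay below $(\gamma!)^\theta B'^{|\beta|}(\beta!)^s$ up to the new constants $A_1, B'$. This uses the elementary inequalities $\gamma'!(\gamma-\gamma')! \le \gamma!$ and $(\gamma!)^\mu \le (\gamma!)^\theta$ (valid since $\mu < s \le \theta$), together with $\pxi^{N}$-type factors being swallowed by the rapid decay of $\hat u$ and its derivatives. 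One also needs to integrate the $\xi$-variable: the remaining $\pxi^{\tau + (\text{finite})}$ times the Gelfand-Shilov decay of $\hat u$ is integrable, producing a finite constant independent of $x$. I would carry this out by first treating the purely multiplicative ($\beta$-only) and purely differential ($\gamma$-only) cases to fix the constants, then handling the mixed terms by the same estimates. Once the seminorm bound is established, continuity of $a(x,D)$ as a map $\mathscr{S}_s^\theta\to\mathscr{S}_s^\theta$ follows, and its extension to $(\mathscr{S}_s^\theta)'(\R^n)$ is obtained by transposition — one checks that the (formal) transpose $\,{}^t a(x,D)$ has a symbol in the same class $\textbf{\textrm{SG}}_{\mu,\nu,s}^{\tau,\infty}(\R^{2n})$ (an amplitude-reduction argument, or invocation of the composition/transpose results of the calculus in the Appendix), hence is itself continuous on $\mathscr{S}_s^\theta(\R^n)$, and defines the desired extension by duality. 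The restriction $\theta \ge \nu$ enters exactly at the point where the $(\beta')!^\nu$ from $\xi$-differentiation of the symbol must be controlled, now in combination with the $\theta$-index governing $x$-derivatives in the mixed terms.
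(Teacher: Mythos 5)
Your proposal contains a genuine gap at the central technical point: how to defeat the factor $e^{c|x|^{1/s}}$ that the symbol class $\textbf{\textrm{SG}}_{\mu,\nu,s}^{\tau,\infty}(\R^{2n})$ injects into every estimate. You claim that this exponential growth ``is exactly matched by the exponential decay $\exp[-\epsilon|x|^{1/s}]$ available in the $\mathcal{S}_{s,B}^{\theta,A_0}$-estimate of $u$,'' but that decay is nowhere present in the oscillatory-integral representation: in $a(x,D)u(x)=\int e^{i\langle x,\xi\rangle}a(x,\xi)\hat u(\xi)\,\dslash\xi$ the function $u$ enters only through $\hat u(\xi)$, whose rapid decay is in $\xi$, not in $x$. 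Your device of writing $x^\beta e^{i\langle x,\xi\rangle}=D_\xi^\beta e^{i\langle x,\xi\rangle}$ and integrating by parts a finite number $|\beta|$ of times converts the outer $x^\beta$ into $\xi$-derivatives of $a\,\hat u$, which after the symbol estimate leaves you with
$$|x^\beta\partial_x^\gamma a(x,D)u(x)|\ \lesssim\ e^{c|x|^{1/s}}\cdot K_{\beta,\gamma}$$
for a constant $K_{\beta,\gamma}$ with \emph{no} remaining $x$-decay. The supremum over $x$ of the left-hand side is therefore infinite, and no choice of seminorm constants can fix that. Finite integration by parts can only buy you polynomial decay $\px^{-N}$, which is dominated by $e^{c|x|^{1/s}}$; to beat a sub-exponential factor one must perform integration by parts to all orders in a summable series. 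This is exactly what the paper does: it inserts the identity
$$\frac{1}{m_{s,\zeta}(x)}\sum_{j\ge0}\frac{\zeta^j}{(j!)^{2s}}(1-\Delta_\xi)^j e^{i\langle x,\xi\rangle}=e^{i\langle x,\xi\rangle},$$
integrates by parts within the infinite sum, and invokes Lemma~\ref{paola} to see that the prefactor $1/m_{s,\zeta}(x)$ supplies the needed decay $e^{-c'|x|^{1/s}}$ with $c'$ tunable via $\zeta$; the hypothesis $\mu<s$ (strict) enters precisely to make the $j$-series converge, and the leftover $x^\alpha$ is absorbed by $e^{-\epsilon|x|^{1/s}}|x^\alpha|\le A_\epsilon^{|\alpha|+1}(\alpha!)^s$. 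Without some version of this infinite-order integration by parts, the estimate cannot close.

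A secondary, lesser issue: you have swapped the factorial exponents of the symbol class. In \eqref{doubleinfinite} the $\xi$-derivatives cost $(\alpha!)^\mu$ and the $x$-derivatives cost $(\beta!)^\nu$, so in your Leibniz expansion it is the $x$-derivatives of $a$ that cost $\nu$-factorials (absorbed because $\theta\ge\nu$) and the $\xi$-derivatives that cost $\mu$-factorials (absorbed because $\mu<s$). Your text attributes $\mu$ to the $x$-side and $\nu$ to the $\xi$-side; the conclusions you draw happen to remain valid under the paper's hypotheses, but as written the bookkeeping is internally inconsistent and should be corrected.
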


Apart from these general results, in the following we shall focus on operators with symbol $e^{\lambda(x,\xi)}$, that is
\begin{equation} \label{explambda}
e^{\lambda}(x,D)u(x) = \int_{\R^n} e^{i \langle x, \xi \rangle} e^{  \lambda(x,\xi)} \hat{u}(\xi) \dslash \xi
\end{equation}
for some real valued symbol $\lambda \in \textbf{\textrm{SG}}^{(0,1/s)}_{\mu}(\R^{2n}).$  By Proposition \ref{appaelambda} we have 
$e^{\lambda(x,\xi)} \in  \textbf{\textrm{SG}}^{0, \infty}_{\mu}(\R^{2n}).$ We have the following result, see the Appendix for the proof.

\begin{Prop}\label{continforder}
Let $\mu,s,\theta \in \R$ such that $\mu >1, \min\{ s,\theta\} >2\mu-1,$ and let $\lambda \in \textbf{\textrm{SG}}^{(0,1/s)}_{\mu}(\R^{2n}).$ Then, for every $\rho,m \in \R^2,$  the operator $e^{\lambda}(x,D)$ is continuous from $H^m_{\rho,s,\theta}(\R^n)$ into $H^m_{\rho-\delta e_2,s,\theta}(\R^n)$ for every $\delta> C(\lambda):= \sup\limits_{(x,\xi) \in \R^{2n}} \frac{\lambda(x,\xi)}{\px^{1/s}}$.
\end{Prop}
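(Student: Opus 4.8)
The plan is to reduce the mapping property to an $L^2$-boundedness statement by conjugation with the weight operators $\Pi_{m,\rho,s,\theta}$, and then to realize the conjugated operator as a pseudodifferential operator of \emph{finite} order in the classes $\textbf{\textrm{SG}}_{\mu,\nu}^{m'}(\R^{2n})$, for which continuity is already available from \cite{Co, Pa} and Theorem \ref{doubleconj}. Concretely, by definition $e^{\lambda}(x,D)\colon H^m_{\rho,s,\theta}\to H^m_{\rho-\delta e_2,s,\theta}$ is continuous if and only if
$$
Q:= \Pi_{m,\rho-\delta e_2,s,\theta}\, e^{\lambda}(x,D)\, \Pi_{m,\rho,s,\theta}^{-1}
$$
is bounded on $L^2(\R^n)$. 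Since $\langle D\rangle^{m_1}$ and $\exp(\rho_1\langle D\rangle^{1/\theta})$ commute with each other and only the $\rho_2$-component of the weight changes, the factors $\langle D\rangle^{m_1}\exp(\rho_1\langle D\rangle^{1/\theta})$ on the two sides can be moved together and reduce, modulo operators in $\textbf{\textrm{SG}}_{\mu,\nu}^{0}$, to a bounded perturbation; the essential part of $Q$ is therefore
$$
\langle x\rangle^{m_2}\exp\big((\rho_2-\delta)\langle x\rangle^{1/s}\big)\; e^{\lambda}(x,D)\; \exp\big(-\rho_2\langle x\rangle^{1/s}\big)\langle x\rangle^{-m_2}.
$$

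The key step is to compute the symbol of this conjugation. Writing $w(x)=\exp(\rho_2\langle x\rangle^{1/s})\langle x\rangle^{m_2}$, the operator $w(x)^{-1}e^{\lambda}(x,D)w(x)$ has, by the usual asymptotic formula for conjugating a pseudodifferential operator by a multiplication operator (equivalently, by composing the three symbols with the $\textbf{\textrm{SG}}$-calculus of Theorem \ref{doubleconj}), a symbol of the form $e^{\lambda(x,\xi)}\cdot r(x,\xi)$ where $r$ collects the Taylor-type corrections coming from $\partial_\xi^\gamma e^{\lambda}\cdot \partial_x^\gamma(\text{weight ratio})$. One then checks that the surviving exponential factor is exactly $\exp\big(\lambda(x,\xi)-(\rho_2-\delta)\langle x\rangle^{1/s}\big)$ times a symbol in $\textbf{\textrm{SG}}_{\mu,\nu}^{(0,0)}$: indeed the weight difference produces the gain $\exp\big(-\delta\langle x\rangle^{1/s}\big)$, and by the hypothesis $\delta> C(\lambda)=\sup \lambda(x,\xi)/\langle x\rangle^{1/s}$ we get
$$
\lambda(x,\xi)-\delta\langle x\rangle^{1/s}\le \big(C(\lambda)-\delta\big)\langle x\rangle^{1/s}\le 0,
$$
so $\exp\big(\lambda(x,\xi)-\delta\langle x\rangle^{1/s}\big)$ is bounded, and moreover all its $\der$-derivatives are controlled with the right factorial growth because $\lambda\in\textbf{\textrm{SG}}^{(0,1/s)}_{\mu}$ forces $\partial_\xi^\alpha\partial_x^\beta\lambda$ to be bounded by $C^{|\alpha|+|\beta|+1}(\alpha!\beta!)^\mu\langle\xi\rangle^{-|\alpha|}\langle x\rangle^{1/s-|\beta|}$, and differentiating $e^{\lambda}$ only reproduces products of such terms (here the condition $\mu>1$ guarantees that we stay in a Gevrey class). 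Hence the essential part of $Q$ has a symbol in $\textbf{\textrm{SG}}_{\mu,\nu}^{(0,0)}(\R^{2n})$ with $\nu=\mu$, and the full operator $Q$ is, modulo $L^2$-bounded terms, such an operator; by Theorem \ref{doubleconj}, applicable since $\min\{s,\theta\}>2\mu-1=\mu+\nu-1$, it is $L^2$-bounded. This yields the claim.

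The main obstacle is the bookkeeping in the symbolic computation of the conjugation: one must verify that the composition of $e^{\lambda}(x,D)$ with the two unbounded weights $w(x)$, $w(x)^{-1}$ genuinely lands in a finite-order class and does not spawn an uncontrolled exponential, i.e. that the only exponential factor produced is the "diagonal" one $\exp\big(\lambda(x,\xi)-(\rho_2-\delta)\langle x\rangle^{1/s}\big)$ while every correction term carries \emph{algebraic} decay in $\langle\xi\rangle$ and at worst $\langle x\rangle^{m_2}$-type growth in $x$. This is where the strict inequality $\delta>C(\lambda)$ and the order $(0,1/s)$ of $\lambda$ (rather than, say, $(0,1)$) are used in an essential way, and where one must be careful that the derivatives of $\exp(\pm\rho_2\langle x\rangle^{1/s})$, which lie in $\textbf{\textrm{SG}}^{(0,1/s)}_{s}$-type classes, combine with the $\langle\xi\rangle^{-|\gamma|}$ from $\partial_\xi^\gamma e^{\lambda}$ to give genuine smoothing in $\xi$. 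Once this is established, the reduction to Theorem \ref{doubleconj} and standard $L^2$-boundedness of finite-order $\textbf{\textrm{SG}}$ operators closes the argument; the treatment of the $\langle D\rangle$-side weights is entirely analogous and in fact simpler since those factors commute exactly.
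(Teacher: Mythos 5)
The paper's proof is shorter and sidesteps the obstacle your argument runs into. Since $e^{\delta\px^{1/s}}$ depends only on $x$, composing it with a pseudodifferential operator on the \emph{left} is exact (no asymptotic expansion needed), so one simply writes
$$
e^{\lambda}(x,D) = e^{\delta\px^{1/s}}\cdot a(x,D), \qquad a(x,\xi)=e^{\lambda(x,\xi)-\delta\px^{1/s}},
$$
as operators. The hypothesis $\delta>C(\lambda)$ makes the exponent of $a$ nonpositive, and the $\textbf{\textrm{SG}}^{(0,1/s)}_{\mu}$ estimates on $\lambda$ then give $a\in\textbf{\textrm{SG}}^{(0,0)}_{\mu}(\R^{2n})$, a \emph{finite-order} symbol. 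Theorem \ref{doubleconj} bounds $a(x,D)$ on $H^m_{\rho,s,\theta}(\R^n)$, the left multiplication shifts $\rho_2$ down by $\delta$, and one is done.

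Your route, conjugating $e^{\lambda}(x,D)$ by the full weight $\Pi_{m,\rho,s,\theta}$, has a concrete gap in the handling of the $\xi$-side factors. You assert that $\langle D\rangle^{m_1}$ and $\exp(\rho_1\langle D\rangle^{1/\theta})$ on the two sides "can be moved together and reduce, modulo operators in $\textbf{\textrm{SG}}_{\mu,\nu}^{0}$, to a bounded perturbation," and later that "those factors commute exactly." Neither claim is correct: $\exp(\rho_1\langle D\rangle^{1/\theta})$ does \emph{not} commute with $e^{\lambda}(x,D)$, since $\lambda$ depends on $x$, and the correction terms arising from $e^{\rho_1\langle D\rangle^{1/\theta}}e^{\lambda}(x,D)e^{-\rho_1\langle D\rangle^{1/\theta}}$ still carry the exponential growth $e^{c\px^{1/s}}$ of the symbol $e^{\lambda}$, so they do \emph{not} lie in $\textbf{\textrm{SG}}^{(0,0)}_{\mu,\nu}$ and cannot be discarded before the $x$-side weight has compensated. (The finite-order conjugation result, Proposition \ref{conjxi}, is also only stated for $p\in\textbf{\textrm{SG}}^{m}_{\mu,\nu}$ and does not directly apply to the infinite-order symbol $e^{\lambda}$.) To repair your argument you would have to do the $x$-side reduction first so as to land in a finite-order class, which is precisely the paper's exact factorization above; at that point the $\xi$-side weights are already subsumed by Theorem \ref{doubleconj} rather than needing a separate commutation argument.
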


Under slightly stronger assumptions on $\lambda$ we shall prove in the sequel that $e^{\lambda}(x,D)$ is invertible and express the inverse in terms of operators of the form
\begin{equation}
\label{reverseop}
^R \hskip-1pt e^{ -\lambda}u(x)= \iint \fas e^{ -\lambda(y,\xi)} u(y)\, dy \dslash \xi,
\end{equation}
usually called \textit{reverse} operators, cf. \cite{KB, KN}) and defined as oscillatory integrals. It is then convenient to introduce a class of operators including both \eqref{explambda} and \eqref{reverseop}. Details on this class are again postponed to the Appendix at the end of the paper.

\begin{Def}
Let $\mu, \nu,s, \tau  \in \R$ such that $1<\mu \leq s, \nu >1$. We denote by $\Pi_{\mu,\nu,s}^{\tau,\infty}(\R^{3n})$ the space of all functions $a(x,y,\xi) \in C^\infty(\R^{3n})$ such that
$$\left| \der \partial_y^\gamma a(x,y,\xi)\right|  \leq C^{|\alpha+\beta+\gamma|+1} (\alpha!)^{-\mu}(\beta! \gamma!)^{-\nu} \pxi^{\tau-|\alpha|} \langle (x,y) \rangle^{-|\beta+\gamma|}\langle x-y \rangle^{|\beta+\gamma|} e^{c(|x|^{\frac{1}{s}}+|y|^{\frac{1}{s}})}
$$
for every $\alpha, \beta, \gamma \in \N^n, (x,y,\xi) \in \R^{3n}$ and for some positive constants $C, c$ independent of $\alpha, \beta, \gamma$, where $\langle (x,y)\rangle=\left(1+\sum_{j=1}^n(x_j^2+y_j^2)\right)^{1/2}$. We write $\Pi_{\nu,s}^{\tau,\infty}(\R^{3n})$ for $\Pi_{\nu,\nu,s}^{\tau,\infty}(\R^{3n})$.
\end{Def}

It is easy to show that if $a \in \Pi_{\mu,\nu,s}^{\tau,\infty}(\R^{3n})$, then the function $p(x,\xi)= a(x,x,\xi) \in \textbf{\textrm{SG}}_{\mu,\nu,s}^{\tau, \infty}(\R^{2n}).$ On the other hand, if $p \in \textbf{\textrm{SG}}_{\mu,\nu,s}^{\tau,\infty}(\R^{2n})$, then for every $t \in [0,1]$ the function 
$a(x,y,\xi) = p((1-t)x+t y, \xi)$ belongs to $\Pi_{\mu,\nu,s}^{\tau,\infty}(\R^{3n})$.
\\
To every $a \in \Pi_{\mu,\nu,s}^{\tau,\infty}(\R^{3n})$ it is associated an operator of the form
$$Au(x) = \iint e^{i \langle x-y,\xi \rangle} a(x,y,\xi) u(y) \, dy \dslash \xi, \qquad u \in \mathscr{S}_s^\theta(\R^n),$$
defined as standard as an oscillatory integral, namely:
\begin{equation}
\label{opampl}
Au(x) = \lim_{\delta  \to 0}\iint e^{i \langle x-y,\xi \rangle}\chi(\delta y, \delta \xi) a(x,y,\xi) u(y) \, dy \dslash \xi, \qquad u \in \mathscr{S}_s^\theta(\R^n),
\end{equation}
for some $\chi \in \mathcal{S}_\kappa^\kappa(\R^{2n}),$ with $\chi(0,0)=1, \kappa =\min\{\mu,\nu\}.$

By Theorem \ref{reduction} and Proposition \ref{continforder} $A$ acts continuously on $\mathscr{S}^\theta_s(\R^n)$ and on $(\mathscr{S}^\theta_s)'(\R^n)$ for $\mu+\nu-1 <\min\{s,\theta\}$. This fact applies in particular to the operator $^R \hskip-1pt e^{ \lambda}$ if $\lambda \in \textbf{\textrm{SG}}^{(0,1/s)}_{\mu}(\R^{2n})$. As a matter of fact, in this case, by Proposition \ref{appaelambda}, the amplitude $e^{ \lambda(y,\xi)}$ is an element of $\Pi_{\mu,s}^{0,\infty}(\R^{3n})$. Moreover, $^R \hskip-1pt e^{\lambda}$ is in fact the $L^2$-adjoint of $e^{ \lambda}(x,D).$ Hence it maps continuously $H^m_{\rho,s,\theta}(\R^n)$ into $H^m_{\rho-\delta e_2,s,\theta}(\R^n)$ if $\min\{s,\theta\} >2\mu-1$ and $\delta > C(\lambda).$ 

We observe that the classes $\textbf{\textrm{SG}}^{\tau, \infty}_{\mu,\nu,s}(\R^{2n}), \textbf{\textrm{SG}}^{m}_{\mu,\nu}(\R^{2n}), \Pi^{\tau, \infty}_{\mu,\nu,s}(\R^{3n}) $ can be equivalently defined by replacing the weight functions $\pxi$ by $\pxi_h = (h^2+|\xi|^2)^{1/2}$ for some fixed $h \geq 1$. All the previous results can be reformulated in the new notation since it does not modify the classes.
However, under this small modification, we can prove the following composition result, which in turn, choosing $h$ large enough, implies the invertibility of the operators $e^{ \lambda}(x,D)$ as a by-product. 

\begin{Prop} \label{compwithrev}
Let $\lambda \in C^\infty(\R^{2n})$ satisfy the condition 
\begin{equation}\label{questa}
|\der \lambda(x,\xi)| \leq C^{|\alpha+\beta|+1}(\alpha!\beta!)^\mu \pxi_h^{-|\alpha|}\px^{\frac{1}{s}-|\beta|}
\end{equation}
with $\mu>1, s >2\mu-1$.  Then we have 
$$e^{\lambda}(x,D) \circ  ^R \hskip-1pt e^{-\lambda} =I + r_1(x,D),$$
$$^R \hskip-1pt e^{- \lambda}  \circ e^{\lambda}(x,D)  =I + r_2(x,D),$$
where the symbols $r_1, r_2$ satisfy for every $\gamma,\delta \in \N^n, (x,\xi) \in \R^{2n}$
\begin{equation} \label{stimainvertibilita}
|\partial_\xi^\gamma \partial_x^\delta r_{k}(x,\xi) | \leq C_{\gamma \delta} \pxi_h^{-1-|\gamma|} \px^{-1+\frac{1}{s} -|\delta|}, \qquad k=1,2.
\end{equation}
In particular, $r_1,r_2 \in \textbf{\textrm{SG}}^{(0,0)}(\R^{2n})$.

\end{Prop}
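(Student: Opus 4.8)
The plan is to compute the composition $e^{\lambda}(x,D) \circ {}^R\hskip-1pt e^{-\lambda}$ as an oscillatory integral and to extract the identity plus a remainder of order $(-1,-1+1/s)$ in the \textbf{SG} scale, then argue symmetrically for the other composition. First I would write both operators in amplitude form: $e^{\lambda}(x,D)v(x) = \iint e^{i\langle x-z,\eta\rangle} e^{\lambda(x,\eta)} v(z)\, dz\, \dslash\eta$ and ${}^R\hskip-1pt e^{-\lambda}u(z) = \iint e^{i\langle z-y,\xi\rangle} e^{-\lambda(y,\xi)} u(y)\, dy\, \dslash\xi$, so that the composition has amplitude, after the $z$-integration produces $\delta(\eta-\xi)$ formally (rigorously, after a change of variables and a stationary-phase-type reduction), of the form $\iint e^{i\langle x-y,\xi\rangle} e^{\lambda(x,\xi)-\lambda(y,\xi)} u(y)\, dy\, \dslash\xi$. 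Thus $e^{\lambda}(x,D)\circ{}^R\hskip-1pt e^{-\lambda}$ is an operator with amplitude $b(x,y,\xi) = e^{\lambda(x,\xi)-\lambda(y,\xi)}$, and the task reduces to showing that the corresponding operator equals $I$ plus an operator whose (left, i.e. $y=x$) symbol satisfies \eqref{stimainvertibilita}.

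Next I would reduce this amplitude operator to a left-symbol operator by the standard reduction (Theorem \ref{reduction}), writing the symbol as an asymptotic expansion $\sum_\alpha \frac{1}{\alpha!}\partial_\xi^\alpha D_y^\alpha b(x,y,\xi)\big|_{y=x}$. The term $\alpha=0$ gives $b(x,x,\xi)=1$, which produces the identity $I$. Every term with $|\alpha|\geq 1$ carries at least one derivative $D_y$ hitting the exponential $e^{\lambda(x,\xi)-\lambda(y,\xi)}$ at $y=x$: since $\partial_{y_j}\big(e^{\lambda(x,\xi)-\lambda(y,\xi)}\big)\big|_{y=x} = -\partial_{y_j}\lambda(x,\xi)$, and by \eqref{questa} each such derivative contributes a factor bounded by $C\,\pxi_h^{0}\,\px^{1/s-1}$, while the accompanying $\partial_\xi$ contributes $\pxi_h^{-1}$ (the derivative $\partial_\xi\lambda$ is $O(\pxi_h^{-1}\px^{1/s})$, but the leading $\pxi_h^{-1}$ from differentiating the plain symbol part is what matters, and products of several $\lambda$-factors only improve the $\px$ decay since $1/s<1$). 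One checks that the full expansion, and in particular its remainder term from the finite reduction, is controlled by $\pxi_h^{-1}\px^{-1+1/s}$ together with the Gevrey-type factorial growth, using that $s>2\mu-1$ guarantees convergence of the symbolic calculus (this is exactly the hypothesis under which Theorem \ref{reduction} and the composition theorems in the Appendix apply).

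For the reversed composition ${}^R\hskip-1pt e^{-\lambda}\circ e^{\lambda}(x,D)$ I would proceed identically, or alternatively invoke that ${}^R\hskip-1pt e^{-\lambda}$ is the $L^2$-adjoint of $e^{-\bar\lambda}(x,D)$ (here $\lambda$ real, so of $e^{-\lambda}(x,D)$) and dualize the first computation; the amplitude becomes $e^{-\lambda(y,\xi)+\lambda(x,\xi)}$ evaluated with the roles of the two factors swapped, giving again $1$ on the diagonal and a remainder of the same order. Finally, since $1/s>0$ we have $-1+1/s\leq 0$ for $s\geq 1$, hence $r_k \in \textbf{\textrm{SG}}^{(0,0)}(\R^{2n})$, and because these are bounded operators on $H^m_{\rho,s,\theta}(\R^n)$ of order $(0,0)$ whose norm on $H^m_{\rho,s,\theta}$ can be made small by choosing the parameter $h$ large (the remainder symbols carry genuine decay $\pxi_h^{-1}$ in $\xi$ uniformly, so their operator norms tend to $0$ as $h\to\infty$), a Neumann series argument yields invertibility of $e^{\lambda}(x,D)$. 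The main obstacle I anticipate is making the reduction from the amplitude $e^{\lambda(x,\xi)-\lambda(y,\xi)}$ to a left symbol fully rigorous in the infinite-order setting: one must track the exponential weights $e^{c|x|^{1/s}}$ through the oscillatory-integral manipulations and verify that differences like $\lambda(x,\xi)-\lambda(y,\xi)$, together with their Taylor remainders in $y$ around $x$, stay within the class $\Pi_{\mu,s}^{\tau,\infty}$ with the claimed improved orders; this is where the precise form of \eqref{questa}, especially the gain $\px^{1/s-|\beta|}$ in the $x$-derivatives, is essential and where the bulk of the technical work lies.
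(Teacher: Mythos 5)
Your proposal is correct and follows essentially the same route as the paper: write the first composition as an operator with amplitude $e^{\lambda(x,\xi)-\lambda(y,\xi)}$, reduce it via the amplitude-to-left-symbol expansion (the $\alpha=0$ term gives $I$), and use Fa\`a di Bruno together with \eqref{questa} to show the $j$-th term carries a gain $\pxi_h^{-j}\px^{-(1-1/s)j}$, so that the resummed remainder has order $(-1,-1+1/s)$. One small caveat: for the reversed composition you cannot literally ``proceed identically,'' since ${}^R\hskip-1pt e^{-\lambda}\circ e^{\lambda}(x,D)$ does not directly display an amplitude of that form — the duality argument you offer as an alternative (or first reducing ${}^R\hskip-1pt e^{-\lambda}$ to a left symbol and then composing, as the paper does) is the way to make the second case rigorous.
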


From \eqref{stimainvertibilita}, we deduce in particular that
$$ |\partial_\xi^\gamma \partial_x^\delta r_{k}(x,\xi) | \leq C_{\gamma \delta}  h^{-1}\pxi_h^{-|\gamma|} \px^{ -|\delta|}, \qquad k=1,2.$$
Hence $r_k(x,D), k=1,2$ are bounded operators on $L^2(\R^n)$ with norm $\|r_k (\cdot, D) \|$ which can be taken as small as we want by enlarging $h$.
Then it turns out that if $h \geq h_o$ with $h_o$ sufficiently large, the operator
$e^{\lambda}(x,D)$ is invertible with inverse given in terms of a suitable Neumann series.

The previous result can be generalized to the following conjugation theorem which will play a crucial role in the sequel.

\begin{Th} \label{conjugationthm}
Let $p \in \textbf{\textrm{SG}}_{\mu,\nu}^m (\R^{2n})$ and $\lambda$ satisfying the condition \eqref{questa} with $s >\mu+\nu-1$. Then, there exists $h_o\geq 1$ such that if $h \geq h_o$, we have
\begin{equation}
\label{conjugformula}
e^{\lambda}(x,D) p(x,D) (e^{\lambda}(x,D))^{-1} = p(x,D) +q(x,D) +r(x,D)+r_0(x,D) ,
\end{equation}
where $r \in \textbf{\textrm{SG}}_{\mu,\nu}^{(m_1 -2, m_2- 2(1-1/s) )}(\R^{2n}), r_0 \in \mathcal{S}_{\mu+\nu-1}(\R^{2n})$ and
\begin{equation} \label{qexpression}
q(x,\xi)= \sum_{|\alpha|=1} \partial_\xi^\alpha p(x,\xi)  (i\partial_x)^\alpha \Lambda(x,\xi) + \sum_{|\beta|=1}  D_x^\beta p(x,\xi) \partial_{\xi}^\beta \Lambda(x,\xi)
\end{equation}
belongs to $\textbf{\textrm{SG}}_{\mu,\nu}^{\left(m_1 -1, m_2- 1+1/s\right) }(\R^{2n}).$ 
\end{Th}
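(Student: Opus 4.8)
The plan is to establish \eqref{conjugformula} by a standard symbolic-calculus argument, tracking the orders carefully through the two compositions. First I would write
$$e^{\lambda}(x,D)\,p(x,D)\,(e^{\lambda}(x,D))^{-1} = \bigl(e^{\lambda}(x,D)\,p(x,D)\bigr)\circ (e^{\lambda}(x,D))^{-1},$$
and use Proposition \ref{compwithrev} to replace $(e^{\lambda}(x,D))^{-1}$ by $\,^R\hskip-1pt e^{-\lambda}\circ(I+r_1(x,D))^{-1}$, which for $h\geq h_o$ is a convergent Neumann series $^R\hskip-1pt e^{-\lambda}\circ\bigl(I+\tilde r(x,D)\bigr)$ with $\tilde r\in\textbf{\textrm{SG}}^{(0,0)}(\R^{2n})$ of arbitrarily small $L^2$-norm, in fact with the improved decay $\pxi_h^{-1-|\gamma|}\px^{-1+1/s-|\delta|}$ inherited from \eqref{stimainvertibilita}. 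Hence it suffices to analyse $e^{\lambda}(x,D)\circ p(x,D)\circ\,^R\hskip-1pt e^{-\lambda}$, which by the composition/reduction results in the Appendix (Theorem \ref{reduction}, and the calculus underlying Proposition \ref{compwithrev}) is a pseudodifferential operator whose symbol admits an asymptotic expansion. The remainder coming from $\tilde r$ will be absorbed into $r$ and $r_0$, since composing a symbol of order $\leq(m_1,m_2)$ with one of order $(-1,-1+1/s)$ lands well inside $\textbf{\textrm{SG}}_{\mu,\nu}^{(m_1-2,m_2-2(1-1/s))}(\R^{2n})$ once we note $-1+1/s \le -1 + (1-1/s)\cdot\text{(something)}$... more carefully, $-1+1/s \le -(1-1/s)$ is false in general, so I would instead observe $-1 \le -2(1-1/s)$ fails too; the correct bookkeeping is that $\tilde r$ has order $(-1,-1+1/s)$ which is $\le (m_1-2,m_2-2(1-1/s))$ only after composition with $p$, producing order $(m_1-1, m_2-1+1/s)$ — exactly the order of $q$ — so in fact this contribution must be tracked at the $q$-level too; I would therefore organize the expansion so that the $\tilde r$-term is expanded one further step, its leading part cancelling or combining with the stationary-phase terms, and the genuine remainder dropping to the claimed order.

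The heart of the matter is the symbol of $e^{\lambda}(x,D)\circ p(x,D)\circ\,^R\hskip-1pt e^{-\lambda}$. The plan is to first compute $\,p(x,D)\circ\,^R\hskip-1pt e^{-\lambda}$: since $^R\hskip-1pt e^{-\lambda}$ has amplitude $e^{-\lambda(y,\xi)}\in\Pi_{\mu,s}^{0,\infty}(\R^{3n})$, the composition produces an operator with amplitude whose asymptotic expansion (by the oscillatory-integral reduction of the Appendix) has leading term $p(x,\xi)e^{-\lambda(x,\xi)}$ and correction terms involving $\partial_\xi^\alpha p\cdot\partial_x^\alpha(e^{-\lambda})$, $|\alpha|\ge 1$. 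Then composing on the left with $e^{\lambda}(x,D)$, whose symbol is $e^{\lambda(x,\xi)}$, the leading terms multiply to give $p(x,\xi)$, and the subleading terms assemble — using $\partial_x^\alpha e^{-\lambda} = -(\partial_x^\alpha\lambda)e^{-\lambda}+\dots$ for $|\alpha|=1$ and the analogous expansion for the left composition — into precisely
$$\sum_{|\alpha|=1}\partial_\xi^\alpha p\,(i\partial_x)^\alpha\lambda + \sum_{|\beta|=1}D_x^\beta p\,\partial_\xi^\beta\lambda$$
after matching $\Lambda$ with $\lambda$ (here I am reading $\Lambda$ as the symbol $\lambda$ appearing in \eqref{questa}); this is $q(x,\xi)$. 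Everything of order two steps down from $(m_1,m_2)$, i.e. with a gain of $\pxi^{-2}$ in $\xi$ and $\px^{-2(1-1/s)}$ in $x$ (each derivative in $x$ of $\lambda$ costs one power of $\px^{1/s-1}$ via \eqref{questa}, and the second-order Taylor terms carry two such factors), forms $r$, and the tail of the asymptotic series — being of infinitely negative order in $\xi$ — is the $\mathcal{S}_{\mu+\nu-1}(\R^{2n})$-regularizing term $r_0$. Checking that $q\in\textbf{\textrm{SG}}_{\mu,\nu}^{(m_1-1,m_2-1+1/s)}$ is then immediate from the product rule: $\partial_\xi p$ gains $\pxi^{-1}$, while $\partial_x\lambda$ contributes $\px^{1/s-1}$, so $\partial_\xi p\cdot\partial_x\lambda$ has order $(m_1-1,m_2-1+1/s)$, and symmetrically for the other sum.

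The main obstacle I expect is the rigorous control of the asymptotic expansions in the infinite-order setting: unlike the classical SG-calculus, here the amplitudes grow exponentially in $x$ (through the $e^{c|x|^{1/s}}$ factors in the $\Pi$-classes), so convergence of the oscillatory integrals and the error bounds for truncated expansions must be justified via the Gelfand-Shilov machinery and the class definitions $\textbf{\textrm{SG}}_{\mu,\nu,s}^{\tau,\infty}$, $\Pi_{\mu,\nu,s}^{\tau,\infty}$ — this is exactly what the Appendix supplies (Theorems \ref{reduction}, \ref{doubleconj} and the results leading to Proposition \ref{compwithrev}), and the condition $s>\mu+\nu-1$ is what makes the Gevrey/exponential indices compatible so that the remainder terms genuinely improve and the factorially-divergent coefficients $(\alpha!)^{\mu}$ in \eqref{questa} can be summed against the $\pxi^{-|\alpha|}$ gains. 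A secondary technical point is the precise bookkeeping of the two contributions to $q$ (one from each composition) and verifying that no cross term of order $(m_1-1,m_2-1+1/s)$ is omitted; I would handle this by doing both compositions simultaneously through a single two-variable amplitude $e^{\lambda(x,\xi)-\lambda(y,\xi)}$ after merging, Taylor-expanding the exponent around the diagonal $x=y$, so that the first-order term reproduces $q$ and the second-order term is manifestly of order $(m_1-2,m_2-2(1-1/s))$, i.e. contributes to $r$.
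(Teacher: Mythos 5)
Your overall strategy coincides with the paper's: invert $e^{\lambda}(x,D)$ via Proposition \ref{compwithrev}, writing $(e^{\lambda}(x,D))^{-1} = {}^R\hskip-1pt e^{-\lambda} \circ \sum_{j\geq 0}(-r_1(x,D))^j$, analyze the triple composition $e^{\lambda}(x,D)\, p(x,D)\, {}^R\hskip-1pt e^{-\lambda}$ by the amplitude-reduction and composition results of the Appendix (Theorems \ref{reduction}, \ref{composition}), and then account for the Neumann-series correction. You correctly flag the essential subtlety --- that the first-order Neumann term $-r_{1,1}(x,D)$, once composed with $p$, contributes at order $(m_1-1,m_2-1+1/s)$, the same order as $q$, so it cannot be absorbed wholesale into $r$ --- and you propose expanding one more step so that this piece ``cancels or combines'' with the first-order term of the triple composition. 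That is precisely the mechanism the paper uses.

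However, your middle paragraph then contradicts this plan: you assert that the first-order term of the triple composition alone assembles ``into precisely'' $q$. This is false. Working out the $j=1$ term of the asymptotic expansion (as in the proof of Proposition \ref{compwithrev}) gives
\begin{equation*}
a_1(x,\xi) = \sum_{|\alpha|=1}\partial_\xi^\alpha p\,(i\partial_x)^\alpha\lambda + \sum_{|\beta|=1}D_x^\beta p\,\partial_\xi^\beta\lambda \;-\; p(x,\xi)\sum_{\ell=1}^n(\partial_{\xi_\ell}D_{x_\ell}\lambda)(x,\xi)
\;=\; q(x,\xi) - p(x,\xi)\sum_{\ell=1}^n(\partial_{\xi_\ell}D_{x_\ell}\lambda)(x,\xi).
\end{equation*}
The extra cross-term, of the same order as $q$, comes from the direct $e^{\lambda}\!\leftrightarrow\! e^{-\lambda}$ interaction (no derivative falling on $p$) and cannot be discarded. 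It is restored exactly by the Neumann correction: by \eqref{svilid+rem} one has $r_{1,1}(x,\xi)=-\sum_\ell\partial_{\xi_\ell}D_{x_\ell}\lambda$, so composing the triple product with $-r_{1,1}(x,D)$ produces $\textrm{op}\bigl(p\sum_\ell\partial_{\xi_\ell}D_{x_\ell}\lambda\bigr)$ modulo a remainder of order $(m_1-2, m_2-2(1-1/s))$, which cancels the unwanted term. Your closing suggestion to merge everything into a single amplitude $e^{\lambda(x,\xi)-\lambda(y,\xi)}$ does not avoid this: that amplitude reproduces the same $a_1$ above, and the Neumann correction remains indispensable because $(e^{\lambda}(x,D))^{-1}\neq {}^R\hskip-1pt e^{-\lambda}$. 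Once this bookkeeping is reinstated, your argument coincides with the paper's. (You are right, incidentally, that $\Lambda$ in \eqref{qexpression} should be read as the $\lambda$ of \eqref{questa}.)
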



\section{Change of variable}\label{sec3}
Theorem \ref{conjugationthm} is needed to derive energy estimates for a possible solution $u$ of \eqref{CP} which cannot be obtained directly. In fact, looking for an energy estimate in $L^2$ for $u$ and arguing as standard, we have by \eqref{P}:  
\beqsn
\ds\frac d {dt} \|u(t)\|^2&=&2\Re\langle \ds\frac d {dt} u(t), u(t)\rangle
\\
&=&2\Re\langle f(t), u(t)\rangle+2\Re\langle(i\Delta_x)u(t),u(t)\rangle
- 2\Re\langle A(t)u(t), u(t)\rangle
\\
&=&2\Re\langle f(t), u(t)\rangle- \langle (A+A^\ast)(t)u(t), u(t)\rangle.
\eeqsn
Now it is easy to notice that the operator $(A+A^\ast)(t,x,D)$ is not $L^2$-bounded since its principal symbol is $-2\sum_{j=1}^n\Im a_j(t,x){\xi_j}\in \textbf{\textrm{SG}}^{(1,-\sigma)}(\R^{2n})$. The idea is then to perform first a change of variable of the form
\beqs\label{change}
v(t,x)=e^{\Lambda}(t,x,D)u(t,x),
\eeqs 
where $e^{\Lambda}(t,x,D)$ denotes a pseudodifferential operator with symbol $e^{\Lambda(t,x,\xi)}$ and 
$e^{\Lambda}(t,x,D)$ is in\-ver\-ti\-ble with inverse $(e^\Lambda(t,x,D))^{-1}.$
In this way we are reduced to consider the auxiliary Cauchy problem 
\beqs
\label{CP2}
\begin{cases}
P_\Lambda(t,x,D_t,D_x)v(t,x)=e^\Lambda(t,x,D)f(t,x)=:f_\Lambda(t,x)\cr
v(0,x)=e^\Lambda(0,x,D)g(x)=:g_{\Lambda}(x) 
\end{cases}
\eeqs
in the unknown $v,$ where 
$$P_\Lambda:=e^{\Lambda}(t,x,D)P(t,x,\partial_t, \partial_x)(e^\Lambda(t,x,D))^{-1}.$$
We shall take $\Lambda(t,x,\xi)$ of the form
\begin{equation}\label{formofLambda}
\Lambda(t,x,\xi)= k(t) \px_h^{1-\sigma}+ \lambda(x,\xi)
\end{equation}
for a suitable symbol $\lambda \in \textrm{\textbf{SG}}^{(0,1/s)}_\mu (\R^{2n})$, for $h$ large enough, and for some positive non-increasing function $k\in C^1[0,T]$ which will be chosen later on. Our aim is to prove that, choosing properly $\lambda(x,\xi)$ and $k(t)$ we may have 
\begin{equation}
\label{PLambda}
P_\Lambda=e^{\Lambda}(t,x,D)P(t,x,\partial_t, \partial_x)(e^\Lambda(t,x,D))^{-1}(x,D)=\partial_t-i\Delta_x+A_\Lambda(t,x,D)+\tilde r_0(t,x,D)
\end{equation}
where $\tilde r_0$ is an operator of order $(0,0)$ and $A_\Lambda$ is an operator (of order $(1,1/s)$) such that $A_\Lambda+A_\Lambda^*$ is a positive operator on $L^2(\R^n)$. This would imply 
that the Cauchy problem \eqref{CP2} is $L^2-$well posed, since the corresponding energy estimate can be obtained from the inequality
\beqs\label{enworks}
\ds\frac d {dt} \|v(t)\|^2&=&2\Re\langle f_\Lambda(t), v(t)\rangle-\langle (A_\Lambda+A_\Lambda^\ast)(t)v(t), v(t)\rangle-2\Re\langle \tilde r_0(t)v(t), v(t)\rangle
\\
\nonumber
&\leq & C\left(\|f_\Lambda(t)\|^2+\|v(t)\|^2\right)
\eeqs
that now works for a positive constant $C$. 
\\
We observe that, taking $\Lambda$ of the form \eqref{formofLambda}, we have
\beqsn
\partial_t-i\Delta_x=(\partial_t-i\Delta_x)e^\Lambda(e^\Lambda)^{-1}=(k'(t) \px_h^{1-\sigma}e^\Lambda +e^\Lambda\partial_t-i\Delta_x e^\Lambda)(e^\Lambda)^{-1}
\eeqsn
but
\beqsn \label{conjLap}
\Delta_xe^{\Lambda}&=& \sum_{j=1}^n\partial_{x_j}(e^\Lambda\partial_{x_j}\Lambda+ e^\Lambda\partial_{x_j})
=\sum_{j=1}^n\left(e^\Lambda (\partial_{x_j}\Lambda)^2+e^\Lambda\partial_{x_j}^2\Lambda+2e^\Lambda(\partial_{x_j}\Lambda)\partial_{x_j}\right)+ e^\Lambda\Delta_x, 
\eeqsn
so that we have
\beqsn
\partial_t-i\Delta_x=k'(t) \px_h^{1-\sigma}+ e^\Lambda\left(\partial_t-i\Delta_x-i\sum_{j=1}^n\left((\partial_{x_j}\Lambda)^2+\partial_{x_j}^2\Lambda+2(\partial_{x_j}\Lambda)\partial_{x_j}\right)\right)(e^\Lambda)^{-1}
\eeqsn
and so
\beqs\label{conjdelta}
\nonumber e^\Lambda\left( \partial_t-i\Delta_x\right)(e^\Lambda)^{-1}&=&\partial_t-i\Delta_x -k'(t) \px_h^{1-\sigma}+ i e^\Lambda
\sum_{j=1}^n\left((\partial_{x_j}\Lambda)^2+\partial_{x_j}^2\Lambda+2(\partial_{x_j}\Lambda)\partial_{x_j}\right)(e^\Lambda)^{-1}
\\
&=&\partial_t-i\Delta_x-k'(t) \px_h^{1-\sigma}- 2\sum_{j=1}^n(\partial_{x_j}\Lambda)D_{x_j}+ r'_0(t,x,D)
\eeqs
with $r'_0(t,x,D)$ a term of order $(0,0)$. By Theorem \ref{conjugationthm} 
the operator $P_\Lambda$ is of the form \eqref{PLambda} with 
\begin{equation}\label{biscio}A_\Lambda:=i\ds\sum_{j=1}^na_j(t,x)D_{x_j}-2\ds\sum_{j=1}^n(\partial_{x_j}\Lambda)D_{x_j}-k'(t) \px_h^{1-\sigma}+ r(t,x,D)+b(t,x),
\end{equation}
where $r\in C([0,T], {\rm{\bf{SG}}}^{(0,1-\sigma)}_{\mu, s_0}(\R^{2n}))$. 
The idea is then to choose $\Lambda$ such that the symbol of the positive order part of $A_\Lambda+A_\Lambda^\ast$  is positive, that is
\begin{equation}\label{fine!}
2\sum_{j=1}^n(\partial_{x_j}\Lambda)\xi_j+\ds\sum_{j=1}^n\Im a_j(t,x)\xi_j+k'(t)\px_h^{1-\sigma}-\Re r(t,x,\xi)-\Re b(t,x) \leq 0
\end{equation}
so that \eqref{enworks} is satisfied by application of the sharp G{\aa}rding inequality.
\\

Our idea is to choose $\lambda$ and $k(t)$ such that the term $2\sum_{j=1}^n(\partial_{x_j}\Lambda)\xi_j$ compensates the term $\sum_{j=1}^n \Im a_j\xi_j$ and $k'(t)\px_h^{1-\sigma}$  controls the term $\Re r(t,x,\xi)$ and $\Re b(t,x)$ in order to get \eqref{fine!}. 
This is the reason of the choice \eqref{formofLambda}.

\medskip
The sequel of this section is devoted to the construction of the function $\lambda(x,\xi)$.
Since the behavior of $\Im a_j$ is described in \eqref{imm}, to control $-\sum_{j=1}^n \Im a_j\xi_j$ it is convenient to look for a function $\lambda_1(x,\xi)$ such that
\beqs\label{eq1}
\ds\sum_{j=1}^n(\partial_{x_j}\lambda_1)\xi_j=|\xi|g_1(x), \quad x\in\R^n,
\eeqs
where $g_1(x)=M\px^{-1+1/s}$ with $M>0, h \geq 1$ to be chosen later on.
It is easy to check that the desired function $\lambda_1$ is given by:
\beqs\label{lambda1}
\lambda_1(x,\xi)=\ds\int_0^{x\cdot\omega} g_1(x-\tau\omega)d\tau, \qquad \omega=\xi/|\xi|.
\eeqs
As a matter of fact we have
\beqsn
\ds\sum_{j=1}^n(\partial_{x_j}\lambda_1)\xi_j&=&\ds\sum_{j=1}^ng_1(x-(x\cdot\omega)\omega)\frac{\xi_j^2}{|\xi|}
+\ds\sum_{j=1}^n \ds\int_0^{x\cdot\omega} \partial_{x_j}g_1(x-\tau\omega)d\tau\ \xi_j
\\
&=&|\xi| g_1(x-(x\cdot\omega)\omega)+|\xi|\ds\int_0^{x\cdot\omega}\ds\sum_{j=1}^n\partial_{x_j}g_1(x-\tau\omega) \frac{\xi_j}{|\xi|} d\tau
\\
&=& |\xi| g_1(x-(x\cdot\omega)\omega)-|\xi|\ds\int_0^{x\cdot\omega}\frac d{d\tau}\left(g_1(x-\tau\omega)\right)d\tau
\\
&=& |\xi| g_1(x-(x\cdot\omega)\omega)- |\xi|g_1(x-(x\cdot\omega)\omega)+|\xi|g_1(x)
\\
&=&|\xi|g_1(x).
\eeqsn
The following Lemma \ref{lemma1} states the behavior of the derivatives of $\lambda_1$.
\begin{Lemma}\label{lemma1} There exists a constant $C_s$ independent of $h$ such that for every $\alpha,\beta\in\Z^n_+$ the function $\lambda_1(x,\xi)$ defined by \eqref{lambda1} satisfies the following estimates:
\beqs\label{derlambda1}
|\partial_\xi^\alpha\partial_x^\beta\lambda_1(x,\xi)|&\leq& MC_s^{|\alpha|+|\beta|+1}\alpha!\beta!\langle x\rangle^{\frac{1}{s}-|\beta|}|\xi|^{-|\alpha|},\quad (x,\xi)\in\mathcal R,
\eeqs
 where $\mathcal R=\{(x,\xi)\in\R^{2n}\vert\ |x\cdot\omega|\leq \px/2\}$.
\end{Lemma}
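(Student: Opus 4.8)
The plan is to estimate the derivatives of $\lambda_1$ by applying standard differentiation rules to the elementary building blocks $x\cdot\omega$, $\omega=\xi/|\xi|$ and $g_1$, after first normalizing the interval of integration so as to remove the variable upper limit. Substituting $\tau=t(x\cdot\omega)$ in \eqref{lambda1} I rewrite
\[
\lambda_1(x,\xi)=(x\cdot\omega)\int_0^1 g_1\big(x-t(x\cdot\omega)\omega\big)\,dt ,
\]
so that the whole $(x,\xi)$–dependence now sits inside a smooth integrand over the fixed interval $[0,1]$. The key geometric remark is that on $\mathcal R$ the entire segment $\{x-t(x\cdot\omega)\omega:t\in[0,1]\}$ stays where $\langle\cdot\rangle$ is comparable to $\langle x\rangle$: writing $x=(x\cdot\omega)\omega+x^\perp$ with $x^\perp\perp\omega$ one has $|x-t(x\cdot\omega)\omega|^2=|x^\perp|^2+(1-t)^2(x\cdot\omega)^2$, hence $|x^\perp|\le|x-t(x\cdot\omega)\omega|\le|x|$, and since $(x,\xi)\in\mathcal R$ gives $|x\cdot\omega|\le\langle x\rangle/2$ we get $|x^\perp|^2\ge\tfrac34\langle x\rangle^2-1$, whence
\[
\tfrac{\sqrt3}{2}\,\langle x\rangle\le\langle x-t(x\cdot\omega)\omega\rangle\le\langle x\rangle,\qquad t\in[0,1],\ (x,\xi)\in\mathcal R .
\]

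Next I would record two elementary ingredients. First, $\xi\mapsto\omega=\xi/|\xi|$ is real–analytic and homogeneous of degree $0$ on $\xi\neq0$, so by Cauchy's inequalities on a polydisc of radius $\sim|\xi|$ one has $|\partial_\xi^\alpha\omega_j(\xi)|\le C^{|\alpha|}\alpha!\,|\xi|^{-|\alpha|}$; consequently $x\cdot\omega$ is homogeneous of degree $0$ in $\xi$ with $|\partial_\xi^\alpha(x\cdot\omega)|\le C^{|\alpha|}\alpha!\,|x|\,|\xi|^{-|\alpha|}$ for $|\alpha|\ge1$, while $\partial_{x_k}(x\cdot\omega)=\omega_k$. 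Second, for the symbol $g_1(y)=M\langle y\rangle^{-1+1/s}$ the Cauchy estimates on a polydisc of radius $\sim\langle y\rangle$ give $|\partial_y^\gamma g_1(y)|\le MC^{|\gamma|}\gamma!\,\langle y\rangle^{-1+1/s-|\gamma|}$. Then $\partial_\xi^\alpha\partial_x^\beta\lambda_1$, obtained by the Leibniz rule applied to the product $(x\cdot\omega)\cdot\int_0^1g_1(\cdots)\,dt$ together with Faà di Bruno's formula applied to the composition $g_1(x-t(x\cdot\omega)\omega)$, is a sum of at most $C^{|\alpha|+|\beta|}$ terms, each an integral over $t\in[0,1]$ of a product of: a factor $x\cdot\omega$ or one of its derivatives, a finite number of derivatives of $\omega$, and one derivative of $g_1$ evaluated at $x-t(x\cdot\omega)\omega$. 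Inserting the bounds above and the comparability $\langle x-t(x\cdot\omega)\omega\rangle\simeq\langle x\rangle$, each such term is $\le MC_s^{|\alpha|+|\beta|}\alpha!\beta!\,\langle x\rangle^{1/s-|\beta|}|\xi|^{-|\alpha|}$, and summing yields \eqref{derlambda1}.

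The delicate point, which I expect to be the main obstacle, is keeping the powers of $\langle x\rangle$ exactly right while preserving the geometric form of the constants. Every $\xi$–derivative landing on the inner argument produces, through $\partial_\xi(x\cdot\omega)$, a factor of size $|x|\,|\xi|^{-1}\lesssim\langle x\rangle\,|\xi|^{-1}$, i.e. an apparently unwanted $\langle x\rangle^{+1}$; but such a factor is always accompanied either by a derivative $\partial_y^\gamma g_1$ carrying one more unit of decay, or it multiplies the bare prefactor $x\cdot\omega$, which contributes only $\langle x\rangle^{+1}$ against the $\langle x\rangle^{-1+1/s}$ of $g_1$. Balancing these exponents term by term in the Faà di Bruno expansion, and collecting the combinatorial factors into the form $C_s^{|\alpha|+|\beta|+1}\alpha!\beta!$ with $C_s$ depending only on $s$ and $n$ — in particular independent of $h$ and of $M$, since $\lambda_1$ does not involve $h$ and is linear in $M$ — is the heart of the computation. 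This bookkeeping can be organized by a direct induction on $|\alpha|+|\beta|$, or, more slickly, by complexifying $x$ and $\xi$ on a polydisc of radii $c\langle x\rangle$ and $c|\xi|$ about a real point of $\mathcal R$, on which $\lambda_1$ extends holomorphically with $|\lambda_1|\lesssim M\langle x\rangle^{1/s}$, and then invoking Cauchy's inequalities to obtain \eqref{derlambda1} directly.
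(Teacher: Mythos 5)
Your approach is correct, and at its core it tracks the same circle of ideas as the paper's proof: one must estimate a composition of $g_1$ with an affine function of $x$ whose coefficients depend on $\omega=\xi/|\xi|$, exploiting the geometric fact that on $\mathcal R$ the argument of $g_1$ stays comparable to $\langle x\rangle$, and then do careful Leibniz/Fa\`a di Bruno bookkeeping to keep the Gevrey constants of the form $C^{|\alpha|+|\beta|+1}\alpha!\beta!$. What you do differently at the organizational level is the normalization $\tau=t(x\cdot\omega)$, which fixes the interval of integration to $[0,1]$; the paper instead keeps the variable endpoint, introduces $H(x,\xi,t)=\int_0^t\langle x-\tau\omega\rangle^{1/s-1}d\tau$ and evaluates at $t=x\cdot\omega$, first proving (\textbf{Step 1}, by induction on $|\alpha+\beta|+k$) Gevrey estimates for $\partial_\xi^\alpha\partial_x^\beta\partial_t^k\langle x-t\omega\rangle^{-\delta}$ on $\mathcal R$ with $|t|\le|x\cdot\omega|$, and then (\textbf{Step 2}) composing with $t=x\cdot\omega$ via the chain rule. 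The paper's explicit induction is precisely the "bookkeeping of factorials" you rightly identify as the heart of the matter, so your suggestion to do "a direct induction on $|\alpha|+|\beta|$" is in substance the paper's route. Your alternative complexification proof is a genuine shortcut that the paper does not take: if you extend $\lambda_1$ holomorphically to a polydisc of radii $c\langle x_0\rangle\times c|\xi_0|$ about a real $(x_0,\xi_0)\in\mathcal R$ (your geometric comparability is exactly what is needed to ensure the complexified argument of $g_1$ stays inside the domain of analyticity of the bracket), the bound $|\lambda_1|\lesssim M\langle x_0\rangle^{1/s}$ on the polydisc plus Cauchy's inequalities gives \eqref{derlambda1} in one stroke, with no combinatorics at all.

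One small inaccuracy: you assert that $C_s$ is independent of $h$ "since $\lambda_1$ does not involve $h$", but in this section the bracket is $\langle x\rangle_h=(h^2+|x|^2)^{1/2}$, so $g_1=M\langle x\rangle_h^{-1+1/s}$ and hence $\lambda_1$ do depend on $h$. The $h$-independence of $C_s$ is still true, but for a different reason: the elementary estimate \eqref{KB1} carries a constant $A_0$ uniform in $h$, and the geometric comparability constant ($\sqrt{3}/2$ in your version, $1/2$ in the paper's) is likewise $h$-independent. This does not affect the validity of your argument, but the stated justification should be corrected.
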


\begin{proof}
We divide the proof in two steps:
\\
\textbf{Step 1.}  We show that for every $\delta>0$, $\alpha,\beta\in\Z^n_+$, $k\in\N,$ the function $\langle x-t\omega\rangle^{-\delta}$ satisfies for every $0\leq |t|\leq |x\cdot\omega|$ and $(x,\xi)\in\mathcal R$ the following estimate:
\beqs\label{stimlangle}
|\partial_\xi^\alpha\partial_x^\beta\partial_t^k\langle x-t\omega\rangle^{-\delta}|\leq C_\delta^{|\alpha|+|\beta|+k+1}\alpha!\beta!k!\langle x\rangle^{-\delta-|\beta|-k}|\xi|^{-|\alpha|}.
\eeqs
We first notice that on $\mathcal{R}$ we have
\beqs\label{kab}
|t|\leq |x\cdot\omega| \Rightarrow \langle x-t\omega\rangle\geq \langle x\rangle/2.
\eeqs
Indeed, for $|t|\leq |x\cdot\omega|$ and on $\mathcal R$, the inequality $|x|\leq |x-t\omega|+ |t\omega|\leq |x-t\omega|+\langle x\rangle/2$ holds, so $\langle x\rangle^2=h^2+|x|^2\leq h^2+2|x-t\omega|^2+\langle x\rangle^2/2$, which gives $\langle x\rangle^2/2\leq h^2+2|x-t\omega|^2\leq 2\langle x-t\omega\rangle^2$, and so \eqref{kab} holds.

Now, if $|\alpha+\beta|+k=0$, the estimate \eqref{stimlangle} trivially holds:
\beqsn
|\langle x-t\omega\rangle^{-\delta}|\leq 2^\delta\langle x\rangle^{-\delta}=C_\delta\langle x\rangle^{-\delta}.
\eeqsn
Let us suppose so to have \eqref{stimlangle} for every $|\alpha+\beta|+k\leq\ell$, $\ell\in\N$. To conclude it is sufficient to check the three following items:
\beqs\label{item3}
&&|\partial_\xi^\alpha\partial_x^{\beta}\partial_t^{k+1}\langle x-t\omega\rangle^{-\delta}|\leq C_\delta^{|\alpha|+|\beta|+k+2}\alpha!\beta!(k+1)!|\xi|^{-|\alpha|}\langle x\rangle^{-\delta-|\beta|-k-1}.\\\label{item1}
&&|\partial_\xi^\alpha\partial_x^{\beta+e_j}\partial_t^k\langle x-t\omega\rangle^{-\delta}|\leq C_\delta^{|\alpha|+|\beta|+k+2}\alpha!(\beta+e_j)!k!|\xi|^{-|\alpha|}\langle x\rangle^{-\delta-|\beta|-1-k},\quad 1\leq j\leq n,
\\\label{item2}
&& |\partial_\xi^{\alpha+e_j}\partial_x^\beta\partial_t^k\langle x-t\omega\rangle^{-\delta}|\leq C_\delta^{|\alpha|+|\beta|+k+2}(\alpha+e_j)!\beta!k!|\xi|^{-|\alpha|-1}\langle x\rangle^{-\delta-|\beta|-k},\quad1\leq j\leq n.
\eeqs
We first compute
\beqsn
\partial_t\langle x-t\omega\rangle^{-\delta}&=&-\delta\langle x-t\omega\rangle^{-\delta-2}\sum_{j=1}^n(x_j-t\omega_j)(-\omega_j)=\delta\langle x-t\omega\rangle^{-\delta-2}(x\cdot\omega-t)
\\
\partial_{x_j}\langle x-t\omega\rangle^{-\delta}&=& -\delta\langle x-t\omega\rangle^{-\delta-2}(x_j-t\omega_j)
\\
\partial_{\xi_j}\langle x-t\omega\rangle^{-\delta}&=&-\frac{\delta}{2}\langle x-t\omega\rangle^{-\delta-2}\partial_{\xi_j}\left(\sum_{i=1}^n(x_i-t\omega_i)^2\right)=\delta t\langle x-t\omega\rangle^{-\delta-2}\sum_{i=1}^n(x_i-t\omega_i)\partial_{\xi_j}\omega_i
\\
&=&\delta t\langle x-t\omega\rangle^{-\delta-2}p_j(t,x,\xi),
\eeqsn
where $$p_j(t,x,\xi)  =\left((x_j-t\omega_j)|\xi|^{-1}+\sum_{i=1}^n(x_i-t\omega_i)\xi_i\partial_{\xi_j}|\xi|^{-1}\right).$$

Then:
\beqs\nonumber
\partial_\xi^\alpha\partial_x^{\beta}\partial_t^{k+1}\langle x-t\omega\rangle^{-\delta}&=&\partial_\xi^\alpha\partial_x^{\beta}\partial_t^k\left(\delta\langle x-t\omega\rangle^{-\delta-2}(x\cdot\omega-t)\right)
\\\label{0} 
&=&\delta\hskip-0.4cm\sum_{\alpha_1+\alpha_2=\alpha}\sum_{\beta_1+\beta_2=\beta}\binom\alpha{\alpha_1}\binom\beta{\beta_1}\left(\partial_\xi^{\alpha_1}\partial_x^{\beta_1}\partial_t^k\langle x-t\omega\rangle^{-\delta-2}\right)\partial_\xi^{\alpha_2}\partial_x^{\beta_2}(x\cdot\omega-t)
\\\nonumber
&&-\delta\left(\partial_\xi^\alpha\partial_x^{\beta}\partial_t^{k-1}\langle x-t\omega\rangle^{-\delta-2}\right)
\eeqs
where the second term in the right-hand side appears only if $k \neq 0.$ Moreover
\beqs
\nonumber
\partial_\xi^{\alpha+e_j}\partial_x^\beta\partial_t^k\langle x-t\omega\rangle^{-\delta}&=&\partial_\xi^{\alpha}\partial_x^\beta\partial_t^k\left(\delta t\langle x-t\omega\rangle^{-\delta-2}p_j(t,x,\xi)\right)
\\\label{1}
&=&\delta\sum_{\alpha_1+\alpha_2=\alpha}\sum_{\beta_1+\beta_2=\beta}\sum_{k_1+k_2=k-1}\binom\alpha{\alpha_1}\binom\beta{\beta_1}\binom k{k_1}\partial_\xi^{\alpha_1}\partial_x^{\beta_1}\partial_t^{k_1}\langle x-t\omega\rangle^{-\delta-2}\cdot
\\
\nonumber&&\quad\cdot\partial_\xi^{\alpha_2}\partial_x^{\beta_2}\partial_t^{k_2}p_j(t,x,\xi)
\\\nonumber
&&+\delta t\sum_{\alpha_1+\alpha_2=\alpha}\sum_{\beta_1+\beta_2=\beta}\sum_{k_1+k_2=k}
\binom\alpha{\alpha_1}\binom\beta{\beta_1}\binom k{k_1}\partial_\xi^{\alpha_1}\partial_x^{\beta_1}\partial_t^{k_1}\langle x-t\omega\rangle^{-\delta-2}\cdot
\\\nonumber&&\quad\cdot\partial_\xi^{\alpha_2}\partial_x^{\beta_2}\partial_t^{k_2}p_j(t,x,\xi),
\eeqs
where the first term in the right-hand side appears only if $k \neq 0$, and
\beqs
\nonumber
\partial_\xi^{\alpha}\partial_x^{\beta+e_j}\partial_t^k\langle x-t\omega\rangle^{-\delta}&=&\partial_\xi^{\alpha}\partial_x^\beta\partial_t^k\left(-\delta\langle x-t\omega\rangle^{-\delta-2}(x_j-t\omega_j)\right),
\\
\label{2}
&=&-\delta\sum_{\alpha_1+\alpha_2=\alpha}\sum_{\beta_1+\beta_2=\beta}\binom\alpha{\alpha_1}\binom\beta{\beta_1}\partial_\xi^{\alpha_1}\partial_x^{\beta_1}\partial_t^{k}\langle x-t\omega\rangle^{-\delta-2}\partial_\xi^{\alpha_2}\partial_x^{\beta_2}(x_j-t\omega_j)
\\\nonumber
&&+\delta\sum_{\alpha_1+\alpha_2=\alpha}\binom\alpha{\alpha_1}\partial_\xi^{\alpha_1}\partial_x^{\beta}\partial_t^{k-1}\langle x-t\omega\rangle^{-\delta-2}\partial_\xi^{\alpha_2}\omega_j,
\eeqs
where the second term in the right-hand side appears only if $k \neq 0$. 
To estimate \eqref{0}, we compute for $\alpha,\beta\in\Z^n_+$
\beqsn
\partial_\xi^\alpha\partial_x^\beta(x\cdot\omega)=
\begin{cases}
\partial_\xi^\alpha(x\cdot\omega)=(x\cdot\xi)\partial_\xi^\alpha |\xi|^{-1}+{\ds\sum_{\ell=1}^n} x_\ell\partial_\xi^{\alpha-e_\ell,\ast}|\xi|^{-1} & \beta=0
\\
\partial_\xi^\alpha\omega_j & \beta=e_j,\quad j=1,\ldots,n
\\
0 & |\beta|\geq 2
\end{cases}
\eeqsn
where the notation $\partial^{\alpha-e_\ell,\ast}$ means that the term appears only if $\alpha_\ell > 0$, and we get
\beqs\label{derxixomega}
|\partial_\xi^\alpha\partial_x^\beta(x\cdot\omega)|\leq A_0^{|\alpha|+|\beta|+1}|\alpha|!|\beta|! |x|^{1-|\beta|}|\xi|^{-|\alpha|}.
\eeqs
Choosing properly the constant $C_s$, we easily obtain \eqref{item3} by taking the modulus in \eqref{0}, making use of formula \eqref{derxixomega}, of the inductive assumption, and of the following well known estimate (cf. \cite{KB}):
\beqs\label{KB2}
|\partial^\gamma |y|^m|\leq A_0^{|\gamma|+1}|\gamma|!|y|^{m-|\gamma|}, \quad y\in\R^n,\ n\geq 1,\ m\in \R, \gamma\in\Z^n_+.
\eeqs
To estimate \eqref{1} 
we first notice that 
\beqs\label{lepij}
\partial_\xi^\alpha\partial_x^\beta\partial_t^kp_j(t,x,\xi)=\begin{cases}
\partial_\xi^\alpha\partial_x^\beta p_j(t,x,\xi) & k=0
\\
\partial_\xi^\alpha(-\omega_j|\xi|^{-1}-|\xi|\partial_{\xi_j}|\xi|^{-1}) & k=1,\beta=0
\\
0 & k=1\ {\rm{and}}\ \beta>0,\ {\rm{or}}\ k\geq 2,
\end{cases}
\eeqs
\beqs\label{ledifferenze}
\partial_\xi^{\alpha}\partial_x^{\beta}(x_j-t\omega_j)=\begin{cases}
x_j-t\omega_j & \alpha= 0, \beta=0,
\\
-t\partial_\xi^\alpha\omega_j& \alpha> 0, \beta=0,
\\
1 & \alpha=0,\beta=e_j,
\\
0& {\rm otherwise},
\end{cases}
\eeqs
and $\partial_\xi^\alpha\omega_j=\xi_j\partial_\xi^\alpha|\xi|^{-1}+\partial_\xi^{\alpha-e_j,\ast}|\xi|^{-1}$.
Then, using again \eqref{KB2} we get
\beqs\label{stima:deromegai}
|\partial_\xi^{\alpha}\omega_j|&\leq& 2A_0^{|\alpha|+1}|\alpha|!|\xi|^{-|\alpha|},\quad \alpha\in\Z^n_+,\ j=1,\cdots,n,
\\\label{pezzetti}
|\partial_\xi^{\alpha}\partial_x^{\beta}(x_j-t\omega_j)|&\leq & A_0^{|\alpha|+|\beta|+1}|\alpha|!|\beta|!\langle x\rangle^{1-|\beta|}|\xi|^{-|\alpha|},
\eeqs
where in \eqref{pezzetti} we used the fact that $|t|\leq |x\cdot\omega|\leq\langle x\rangle /2$.
Now, from \eqref{lepij}, making use of \eqref{KB2}, \eqref{stima:deromegai} and \eqref{pezzetti}, we get, for $(x,\xi)\in\mathcal R$ and $|t|\leq |x\cdot\omega|$:
\beqs\label{cevole}
\begin{cases}
|\partial_\xi^\alpha p_j(t,x,\xi)|\leq A_0^{|\alpha|+1}|\alpha|! (|x|+|t|)|\xi|^{-1-|\alpha|}\leq A_0^{|\alpha|+1}|\alpha|!\langle x\rangle|\xi|^{-1-|\alpha|}
\\
|\partial_\xi^\alpha\partial_{x_k} p_j(t,x,\xi)|\leq A_0^{|\alpha|+1}|\alpha|!|\xi|^{-1-|\alpha|},\quad 1\leq k\leq n
\\
|\partial_\xi^\alpha\partial_{t} p_j(t,x,\xi)|\leq A_0^{|\alpha|+1}|\alpha|!|\xi|^{-1-|\alpha|}
\end{cases}
\eeqs
and $\partial_\xi^\alpha\partial_{x}^\beta\partial_t^k p_j=0$ for $k=0$ and $|\beta|\geq 2$, for $k=1$ and $\beta\neq 0$, for $k\geq2$.

In view of these considerations, if $k \geq 2$ we can split \eqref{1} into
\beqsn
\partial_\xi^{\alpha+e_j}\partial_x^\beta\partial_t^k\langle x-t\omega\rangle^{-\delta}&=&\delta\left(\partial_\xi^{\alpha}\partial_x^{\beta}\partial_t^{k-1}\langle x-t\omega\rangle^{-\delta-2}\right)p_j(t,x,\xi)
\\
&+&\delta\hskip-0.7cm\sum_{\alpha_1+\alpha_2=\alpha,\alpha_2\neq 0}\binom\alpha{\alpha_1}\partial_\xi^{\alpha_1}\partial_x^{\beta}\partial_t^{k-1}\langle x-t\omega\rangle^{-\delta-2}\partial_\xi^{\alpha_2}p_j(t,x,\xi)
\\
&+&\delta\beta_\ell \sum_{\alpha_1+\alpha_2=\alpha,\alpha_2\neq 0}\binom\alpha{\alpha_1}\sum_{\ell=1}^n\partial_\xi^{\alpha_1}\partial_x^{\beta-e_\ell, *}\partial_t^{k-1}\langle x-t\omega\rangle^{-\delta-2}
\partial_\xi^{\alpha_2}\partial_{x_\ell}p_j(t,x,\xi)
\\
&+&\delta(k-1)\sum_{\alpha_1+\alpha_2=\alpha}
\binom\alpha{\alpha_1}\partial_\xi^{\alpha_1}\partial_x^{\beta}\partial_t^{k-2}\langle x-t\omega\rangle^{-\delta-2}\partial_\xi^{\alpha_2}\partial_tp_j(t,x,\xi)
\\
&+&\delta t\left(\partial_\xi^{\alpha}\partial_x^{\beta}\partial_t^k\langle x-t\omega\rangle^{-\delta-2}\right)p_j(t,x,\xi)
\\
&+&\delta t\hskip-0.7cm\sum_{\alpha_1+\alpha_2=\alpha,\alpha_2\neq 0}\binom\alpha{\alpha_1}\partial_\xi^{\alpha_1}\partial_x^{\beta}\partial_t^k\langle x-t\omega\rangle^{-\delta-2}\partial_\xi^{\alpha_2}p_j(t,x,\xi)
\\
&+&\delta t\beta_\ell \sum_{\alpha_1+\alpha_2=\alpha,\alpha_2\neq 0}\binom\alpha{\alpha_1}\sum_{\ell=1}^n\partial_\xi^{\alpha_1}\partial_x^{\beta-e_\ell,*}\partial_t^k\langle x-t\omega\rangle^{-\delta-2}
\partial_\xi^{\alpha_2}\partial_{x_\ell}p_j(t,x,\xi)
\\
&+& k \delta t\sum_{\alpha_1+\alpha_2=\alpha}
\binom\alpha{\alpha_1}\partial_\xi^{\alpha_1}\partial_x^{\beta}\partial_t^{k-1}\langle x-t\omega\rangle^{-\delta-2}\partial_\xi^{\alpha_2}\partial_t p_j(t,x,\xi);
\eeqsn
by the inductive hypothesis and \eqref{cevole}, we thus obtain:
\beqsn
|\partial_\xi^{\alpha+e_j}\partial_x^\beta\partial_t^k\langle x-t\omega\rangle^{-\delta}|&\leq&\delta C_\delta^{|\alpha|+|\beta|+k}\alpha!\beta!(k-1)!|\xi|^{-1-|\alpha|}\langle x\rangle^{-\delta-|\beta|-k}
\\
&&+\delta|t|C_\delta^{|\alpha|+|\beta|+k+1}\alpha!\beta!k!|\xi|^{-1-|\alpha|}\langle x\rangle^{-\delta-1-|\beta|-k}
\\
&\leq&C_\delta^{|\alpha|+|\beta|+k+2}(\alpha+e_j)!\beta!k!|\xi|^{-1-|\alpha|}\langle x\rangle^{-\delta-|\beta|-k},
\eeqsn
that is \eqref{item1} when $k \geq 2$. The cases $k=0$ and $k=1$ can be treated similarly.
\\
Finally, with analogue computations we have:
\beqsn
|\partial_\xi^{\alpha}\partial_x^{\beta+e_j}\partial_t^k\langle x-t\omega\rangle^{-\delta}|&\leq& \delta|\partial_\xi^\alpha\partial_x^\beta\partial_t^k \langle x-t\omega\rangle^{-\delta-2}|\cdot|x_j-t\omega_j|
\\
&&+\delta\hskip-0.7cm\sum_{\alpha_1+\alpha_2=\alpha,\alpha_2\neq 0}\binom\alpha{\alpha_1}|\partial_\xi^{\alpha_1}\partial_x^\beta\partial_t^k\langle x-t\omega\rangle^{-\delta-2}|\cdot |t|\cdot |\partial_\xi^{\alpha_2}\omega_j|
\\
&&+\delta|\partial_\xi^\alpha\partial_x^{\beta-e_j}\partial_t^{k}\langle x-t\omega\rangle^{-\delta-2}|
\\
&&+\delta k\sum_{\alpha_1+\alpha_2=\alpha}\binom\alpha{\alpha_1}|\partial_\xi^{\alpha_1}\partial_x^\beta\partial_t^{k-1} \langle x-t\omega\rangle^{-\delta-2}||\partial_\xi^{\alpha_2}\omega_j|,
\eeqsn
where the last term in the right-hand side appears only if $k \neq 0$. Then, arguing as before, we obtain \eqref{item2}. Hence \eqref{stimlangle} is proved. 

\vskip0.2cm
\textbf{Step 2.} Set 
$$H(x,\xi, t)= \int_{0}^t \langle x-\tau \omega \rangle^{1/s-1}\, d\tau.$$
We want to estimate the derivatives of the composed function $\lambda_1(x,\xi)=H(x,\xi, x\cdot \omega).$ Formal computations give
\beqs\nonumber
\partial_x^\beta (H(x,\xi, x\cdot\omega))&=&(\partial_x^\beta H)(x,\xi,x\cdot\omega)+\ds\sum_{j=1}^n\beta_j(\partial_x^{\beta-e_j}\partial_tH)(x,\xi,x\cdot\omega)\partial_{x_j}(x\cdot\omega)
\\\nonumber
&&+\cdots+(\partial_t^{|\beta|}H)(x,\xi,x\cdot\omega)(\partial_{x_1}(x\cdot\omega))^{\beta_1}\cdots(\partial_{x_n}(x\cdot\omega))^{\beta_n}
\\\label{asd}
&=&\ds\sum_{\gamma\leq \beta}\binom{\beta}{\gamma}(\partial_x^{\beta-\gamma}\partial_t^{|\gamma|}H)(x,\xi,x\cdot\omega)\omega_1^{\gamma_1}\cdots\omega_n^{\gamma_n} 
\eeqs
from which it follows by Leibniz formula that
\beqs\nonumber
\partial_\xi^\alpha \partial_x^\beta ( H(x,\xi, x\cdot \omega)) & =& 
\sum_{\alpha_0+\ldots + \alpha_n=\alpha} \frac{\alpha!}{\alpha_0! \ldots \alpha_n!}\sum_{\delta \leq \alpha_0}\binom{\alpha_0}{\delta}\sum_{\gamma\leq \beta}\binom{\beta}{\gamma}(\partial_x^{\beta-\gamma}\partial_\xi^{\alpha_0-\delta}\partial_t^{|\gamma|+|\delta|}H)(x,\xi,x\cdot\omega) 
\\ &&\times (\partial_{\xi_n}(x\cdot \omega))^{\delta_1} \cdots (\partial_{\xi_n}(x\cdot \omega))^{\delta_n} \partial_\xi^{\alpha_1}(\omega_1^{\gamma_1})\cdots \partial_\xi^{\alpha_n}(\omega_n^{\gamma_n}) \\ \nonumber
&=& \sum_{\alpha_0+\ldots + \alpha_n=\alpha} \frac{\alpha!}{\alpha_0! \ldots \alpha_n!}\sum_{\stackrel{\gamma\leq \beta, \delta \leq \alpha_0}{\gamma+\delta \neq 0}}\binom{\alpha_0}{\delta}\binom{\beta}{\gamma}(\partial_x^{\beta-\gamma}\partial_\xi^{\alpha_0-\delta}\partial_t^{|\gamma|+|\delta|-1} \langle x-t\omega \rangle^{-1+1/s})_{|_{t=x\cdot \omega}} 
\\ \nonumber &&\times (\partial_{\xi_1}(x\cdot \omega))^{\delta_1} \cdots (\partial_{\xi_n}(x\cdot \omega))^{\delta_n} \partial_\xi^{\alpha_1}(\omega_1^{\gamma_1})\cdots \partial_\xi^{\alpha_n}(\omega_n^{\gamma_n}) 
\\ \nonumber &&+
\int_0^{x \cdot \omega}\partial_x^{\beta}\partial_\xi^{\alpha} \langle x-\tau\omega \rangle^{-1+1/s}\, d\tau.
\eeqs
Hence the estimate \eqref{stimlangle} follows from the previous estimates. We leave the details to the reader.
\end{proof}

Lemma \ref{lemma1} states that $\lambda_1$ behaves like a $\textbf{\textrm{SG}}$ symbol of order $(0, 1/s)$ only in the region $\mathcal{R}.$ 
This leads to introduce a partition of the phase space and to consider in the complementary region $|x\cdot\omega|\geq \px/2$ the function 
\beqs \label{g2}
g_2(x,\xi)=M\langle x\cdot\omega\rangle^{-1+1/s}
\eeqs
and the corresponding $\lambda_2(x,\xi)$ satisfying the condition
\beqs\label{eq2}
\ds\sum_{j=1}^n(\partial_{x_j}\lambda_2)\xi_j=|\xi|g_2(x), \quad x\in\R^n,
\eeqs
compare with \eqref{eq1}. As before we can take
\beqs\label{lambda2}
\lambda_2(x,\xi) :=\ds\int_0^{x\cdot\omega} g_2(x-\tau\omega,\xi)d\tau.
\eeqs 
Notice that 
\beqs \label{equivlambda2}
\lambda_2(x,\xi) &=& \int_0^{x\cdot\omega} \hskip-0.2cmM\langle (x-\tau\omega)\cdot\omega\rangle^{-1+1/s} d\tau
\\
\nonumber&=&\int_0^{x\cdot\omega} \hskip-0.2cmM\langle x\cdot\omega-\tau\rangle^{-1+1/s} d\tau=\int_0^{x\cdot\omega} \hskip-0.2cmM\langle z\rangle^{-1+1/s}\, dz.
\eeqs

In the next Lemma we derive suitable estimates for the function $\lambda_2$ on the complementary set of $\mathcal{R}$.

\begin{Lemma}\label{lemma2} There exists a positive constant $C_s$ independent of $h$ such that the function $\lambda_2(x,\xi)$ defined by \eqref{g2}, \eqref{lambda2} satisfies for every $\alpha,\beta\in\Z^n_+$ the following estimates  
\beqs\label{derlambda2}
|\partial_\xi^\alpha\partial_x^\beta\lambda_2(x,\xi)|&\leq& MC_s^{|\alpha|+|\beta|+1}\alpha!\beta!\langle x\rangle^{1/s-|\beta|}|\xi|^{-|\alpha|},\quad (x,\xi)\in\R^{2n}\setminus\mathcal R.
\eeqs
\end{Lemma}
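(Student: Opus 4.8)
The plan is to reduce Lemma \ref{lemma2} to estimating the one-variable function
$$G(z) = \int_0^z M\langle w \rangle^{-1+1/s}\, dw,$$
since by \eqref{equivlambda2} we have $\lambda_2(x,\xi) = G(x\cdot\omega)$ with $\omega = \xi/|\xi|$. First I would record the elementary bound $|G(z)| \leq M C_s \langle z\rangle^{1/s}$, which follows by integrating $\langle w\rangle^{-1+1/s}$ and using $1/s \in (0,1)$; and more generally, for the derivatives in $z$, $|G^{(k)}(z)| = |M \partial_z^{k-1}\langle z\rangle^{-1+1/s}| \leq M A_0^{k} k! \langle z\rangle^{1/s-k}$ for $k\geq 1$ by \eqref{KB2}. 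The crucial point that makes the region $\R^{2n}\setminus\mathcal R$ favorable is the trivial inequality $\langle z \rangle = \langle x\cdot\omega\rangle \geq \langle x\rangle/2$ which holds precisely when $|x\cdot\omega|\geq \langle x\rangle/2$; this lets me convert every power $\langle x\cdot\omega\rangle^{1/s-k}$ with $1/s-k \leq 0$ (i.e. $k\geq 1$, since $1/s<1$) into $\langle x\rangle^{1/s-k}$ up to a constant $2^{k-1/s}$ absorbed into $C_s^{|\alpha|+|\beta|+1}$.

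Next I would differentiate the composition $\lambda_2(x,\xi) = G(x\cdot\omega)$ by the Faà di Bruno / Leibniz machinery exactly as in Step 2 of the proof of Lemma \ref{lemma1}: writing $\partial_x^\beta(G(x\cdot\omega)) = \sum_{\gamma\leq\beta}\binom{\beta}{\gamma} G^{(|\gamma|)}(x\cdot\omega)\,\omega^\gamma$ (using $\partial_{x_j}(x\cdot\omega)=\omega_j$ and that second $x$-derivatives of $x\cdot\omega$ vanish), and then applying $\partial_\xi^\alpha$ via the Leibniz rule, which produces terms of the shape $G^{(|\gamma|+|\delta|)}(x\cdot\omega)$ times products of $\partial_\xi$-derivatives of $x\cdot\omega$ and of the $\omega_j^{\gamma_j}$. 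For the $\xi$-derivatives I would use the already-established bounds $|\partial_\xi^\alpha(x\cdot\omega)| \leq A_0^{|\alpha|+1}|\alpha|!\,|x|\,|\xi|^{-|\alpha|}$ for $\alpha\neq 0$ (from \eqref{derxixomega} with $\beta=0$), $|\partial_\xi^\alpha\omega_j| \leq 2A_0^{|\alpha|+1}|\alpha|!\,|\xi|^{-|\alpha|}$ (from \eqref{stima:deromegai}), and the analogous bound for $\partial_\xi^\alpha(\omega_j^{\gamma_j})$ obtained by Leibniz from \eqref{stima:deromegai}. Combining these with the derivative bounds on $G$ and the region inequality $\langle x\cdot\omega\rangle\geq\langle x\rangle/2$, a straightforward bookkeeping of the combinatorial factors (each binomial sum over $\gamma\leq\beta$, $\delta\leq\alpha_0$, and over the partitions $\alpha_0+\dots+\alpha_n=\alpha$ contributes a geometric factor absorbable into $C_s^{|\alpha|+|\beta|+1}$, and the factorials recombine as $\alpha!\beta!$) yields
\beqsn
|\partial_\xi^\alpha\partial_x^\beta\lambda_2(x,\xi)| \leq M C_s^{|\alpha|+|\beta|+1}\alpha!\,\beta!\,\langle x\rangle^{1/s-|\beta|}|\xi|^{-|\alpha|}, \qquad (x,\xi)\in\R^{2n}\setminus\mathcal R.
\eeqsn

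The one subtlety worth flagging is the term where $|\gamma|+|\delta|=0$, i.e. the "lowest order" contribution $G^{(0)}(x\cdot\omega)=\lambda_2$ itself (this occurs for instance when $\beta=0$ and no $\xi$-derivative lands so as to create a $t$-derivative): here one only has the bound $|G(x\cdot\omega)| \leq M C_s \langle x\cdot\omega\rangle^{1/s}$, and since $1/s>0$ the inequality $\langle x\cdot\omega\rangle\geq\langle x\rangle/2$ goes the \emph{wrong} way — but on $\R^{2n}\setminus\mathcal R$ we also have $\langle x\cdot\omega\rangle \leq \langle x \rangle$ trivially (because $|x\cdot\omega|\leq|x|$ always), so $\langle x\cdot\omega\rangle^{1/s}\leq\langle x\rangle^{1/s}$ and this term is fine too; thus both the decaying and the growing powers of $\langle x\cdot\omega\rangle$ are controlled by $\langle x\rangle$ on this region, which is the whole reason the lemma splits cleanly along $\mathcal R$. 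Apart from this observation the proof is a routine, if tedious, induction and I would, as the authors do for Lemma \ref{lemma1}, leave the detailed combinatorial bookkeeping to the reader. The main obstacle is therefore purely organizational — tracking the constants and factorials through the iterated Leibniz expansions — rather than conceptual.
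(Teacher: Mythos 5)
Your proposal is essentially the paper's own argument: the authors too exploit the one-variable representation $\lambda_2(x,\xi)=G(x\cdot\omega)$ with $G(z)=M\int_0^z\langle w\rangle^{-1+1/s}\,dw$ (cf.\ \eqref{equivlambda2}), prove the auxiliary bound \eqref{derbracketdot} on derivatives of $\langle x\cdot\omega\rangle^{-1+1/s}$, and then invoke exactly the two region facts you single out --- $\langle x\cdot\omega\rangle\leq\langle x\rangle$ (always, for the zeroth-order term) and $|x|\leq 2|x\cdot\omega|$ on $\R^{2n}\setminus\mathcal R$ (to absorb the factors $|x|$ produced by $\partial_\xi$-derivatives of $x\cdot\omega$) --- so the ``clean split along $\mathcal R$'' you identify is indeed the heart of the matter; the only cosmetic difference is that you propose a Fa\`a di Bruno / Leibniz expansion whereas the paper runs an induction on $|\alpha+\beta|$. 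One small slip worth fixing: since $G$ has no explicit $x$-dependence, the chain rule gives simply $\partial_x^\beta\bigl(G(x\cdot\omega)\bigr)=G^{(|\beta|)}(x\cdot\omega)\,\omega^\beta$ rather than the sum over $\gamma\leq\beta$ you wrote (which incorrectly retains terms such as $G(x\cdot\omega)$ itself when $|\beta|\geq 1$), and the derivative bound on $G^{(k)}$ for $k\geq 1$ comes from \eqref{KB1} (derivatives of $\langle z\rangle^{m}$) rather than \eqref{KB2} (derivatives of $|z|^{m}$); neither point affects the conclusion.
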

\begin{proof}
We work again by induction on $|\alpha+\beta|$. If $|\alpha+\beta|=0$ the assertion is a direct consequence of \eqref{equivlambda2}: 
$|\lambda_2(x,\xi)|\leq MC_s \langle x\cdot\omega\rangle^{1/s}\leq MC_s \langle x\rangle^{1/s}$
since $1/s>0.$
Now, suppose that \eqref{derlambda2} holds for every $|\alpha|+|\beta|\leq\ell$. We have to check for every $1\leq j\leq n$:
\beqs\label{one}
|\partial_\xi^{\alpha+e_j}\partial_x^\beta\lambda_2(x,\xi)|&\leq& MC_s^{|\alpha|+|\beta|+2}(\alpha+e_j)!\beta!\langle x\rangle^{1/s-|\beta|}|\xi|^{-|\alpha|-1}
\\\label{two}
|\partial_\xi^{\alpha}\partial_x^{\beta+e_j}\lambda_2(x,\xi)|&\leq& MC_s^{|\alpha|+|\beta|+2}\alpha!(\beta+e_j)!\langle x\rangle^{-1+1/s-|\beta|}|\xi|^{-|\alpha|}.
\eeqs
We have:
\beqs\label{dxi+1}
\partial_\xi^{\alpha+e_j}\partial_x^\beta\lambda_2(x,\xi)&=&\partial_\xi^{\alpha}\partial_x^\beta\partial_{\xi_j}\int_0^{x\cdot\omega} \!\!\!M\langle z\rangle^{-1+1/s}\, dz=\partial_\xi^{\alpha}\partial_x^\beta\left(M\langle x\cdot\omega\rangle^{-1+1/s}\cdot \partial_{\xi_j}(x\cdot\omega)\right)
\\\nonumber
&=&M\sum_{\alpha_1+\alpha_2=\alpha}\sum_{\beta_1+\beta_2=\beta}\binom\alpha{\alpha_1}\binom\beta{\beta_1}
\partial_\xi^{\alpha_1}\partial_x^{\beta_1} \langle x\cdot\omega\rangle^{-1+1/s}\partial_\xi^{\alpha_2+e_j}\partial_x^{\beta_2}(x\cdot\omega)
\eeqs
and
\beqs\label{dx+1}
\hskip+0.2cm\partial_\xi^\alpha\partial_x^{\beta+e_j}\lambda_2(x,\xi)&=&\partial_\xi^{\alpha}\partial_x^\beta\partial_{x_j}\int_0^{x\cdot\omega} M\langle w\rangle^{-1+1/s}dw=\partial_\xi^{\alpha}\partial_x^\beta\left(M\langle x\cdot\omega\rangle^{-1+1/s}\cdot \partial_{x_j}(x\cdot\omega)\right)
\\
\nonumber&=&\partial_\xi^{\alpha}\partial_x^\beta\left(M\langle x\cdot\omega\rangle^{-1+1/s}\cdot \omega_j\right)
\\\nonumber
&=&M\sum_{\alpha_1+\alpha_2=\alpha}\binom\alpha{\alpha_1}\partial_\xi^{\alpha_1}\partial_x^{\beta} \langle x\cdot\omega\rangle^{-1+1/s}\partial_\xi^{\alpha_2}(\omega_j).
\eeqs
To give estimates of the derivatives here above, we need to show that for every $\alpha,\beta\in\Z^n_+$ the following formula holds:
\beqs\label{derbracketdot}
|\partial_\xi^{\alpha}\partial_x^{\beta} \langle x\cdot\omega\rangle^{-1+1/s}|\leq C_s A_0^{|\alpha|+|\beta|}\alpha!\beta! \langle x\cdot\omega\rangle^{-1+1/s-|\beta|}|\xi|^{-|\alpha|}, \qquad (x,\xi)\in\R^{2n}\setminus\mathcal R.
\eeqs
We do that by induction: if $|\alpha+\beta|=0$, \eqref{derbracketdot} is true. If it holds for every $(\alpha,\beta)$ with $|\alpha|+|\beta|\leq\ell$, we have to show that it holds also for the pairs $(\alpha+e_j,\beta)$ and $(\alpha,\beta+e_j)$, $1\leq j\leq n.$
Applying Leibniz formula and taking into account the fact that $\partial_x^\beta (x \cdot \omega)=0$ for $|\beta| >2$, we obtain:
\beqs\nonumber
\partial_\xi^{\alpha+e_j}\partial_x^\beta \langle x\cdot\omega\rangle^{-1+1/s}&=&\partial_\xi^{\alpha}\partial_x^\beta\left(\left(-1+\frac{1}{s}\right)\langle x\cdot\omega\rangle^{-3+1/s}(x\cdot\omega)\partial_{\xi_j}(x\cdot\omega)\right)
\\\nonumber
&=&\left(-1+\frac{1}{s}\right)\hskip-0.3cm\sum_{\alpha_1+\alpha_2+\alpha_3=\alpha}\sum_{\stackrel{\beta_1+\beta_2+\beta_3=\beta}{|\beta_2+\beta_3|\leq 2 }}\frac{\alpha!\beta!}{\alpha_1!\alpha_2!\alpha_3!\beta_1!\beta_2!\beta_3!}\cdot
\\\label{s}
&&\hskip+3.7cm\cdot
\partial_\xi^{\alpha_1}\partial_x^{\beta_1} \langle x\cdot\omega\rangle^{-3-1/s}\partial_\xi^{\alpha_2}\partial_x^{\beta_2}(x\cdot\omega)\partial_\xi^{\alpha_3+e_j}\partial_x^{\beta_3}(x\cdot\omega) 
\\\nonumber
\partial_\xi^{\alpha}\partial_x^{\beta+e_j} \langle x\cdot\omega\rangle^{-1+1/s}&=&\partial_\xi^{\alpha}\partial_x^\beta\left(\left(-1+\frac{1}{s}\right)\langle x\cdot\omega\rangle^{1/s-3}(x\cdot\omega)\omega_j\right)
\\\label{ss}
&=&-\left(-1+\frac{1}{s}\right)\sum_{\alpha_1+\alpha_2+\alpha_3=\alpha}\sum_{\stackrel{\beta_1+\beta_2=\beta}{|\beta_2| \leq 1}}\frac{\alpha!\beta!}{\alpha_1!\alpha_2!\alpha_3!\beta_1!\beta_2!}\cdot
\\
&&\nonumber\hskip+5.5cm\cdot
\partial_\xi^{\alpha_1}\partial_x^{\beta_1} \langle x\cdot\omega\rangle^{1/s-3}\partial_\xi^{\alpha_2}\partial_x^{\beta_2}(x\cdot\omega)\partial_\xi^{\alpha_3}\omega_j.
\eeqs
From the inductive assumption \eqref{derbracketdot}, using \eqref{derxixomega} and \eqref{stima:deromegai} we get respectively
\beqs\label{sss}
\hskip+0.5cm|\partial_\xi^{\alpha+e_j}\partial_x^\beta \langle x\cdot\omega\rangle^{-1+1/s}|
&\leq&C_s A_0^{|\alpha|+|\beta|+3}(\alpha+e_j)!\beta! |\xi|^{-|\alpha|-1}
\hskip-0.3cm\sum_{\stackrel{\beta_1+\beta_2+\beta_3=\beta}{|\beta_2+\beta_3|\leq 2}}\hskip-0.5cm \langle x\cdot\omega\rangle^{1/s-3-|\beta_1|}|x|^{2-|\beta_2+\beta_3|}
\\\label{ssss}
|\partial_\xi^{\alpha}\partial_x^{\beta+e_j} \langle x\cdot\omega\rangle^{-1+1/s}|&\leq&C_s A_0^{|\alpha|+|\beta|+3}\alpha!(\beta+e_j)! |\xi|^{-|\alpha|}
\hskip-0.3cm\sum_{\stackrel{\beta_1+\beta_2=\beta}{|\beta_2|\leq 1}}\hskip-0.2cm \langle x\cdot\omega\rangle^{1/s-3-|\beta_1|}|x|^{1-|\beta_2|}.
\eeqs
Now, in \eqref{sss}, since $|\beta_2+\beta_3|\leq 2$, then $|x|^{2-|\beta_2+\beta_3|}\leq
4 |x\cdot\omega|^{2-|\beta_2+\beta_3|}\leq 4\langle x\cdot\omega\rangle^{2-|\beta_2+\beta_3|}$, on $\R^{2n}\setminus\mathcal R$.
From \eqref{sss} we get so
 \beqsn
 |\partial_\xi^{\alpha+e_j}\partial_x^\beta \langle x\cdot\omega\rangle^{-1+1/s}|&\leq &
C_s A_0^{|\alpha|+|\beta|+1}(\alpha+e_j)!\beta! |\xi|^{-|\alpha|-1} \langle x\cdot\omega\rangle^{-1+1/s-|\beta|}.  
\eeqsn
Similarly, in \eqref{ssss} since $|\beta_2|\leq 1$, then $|x|^{1-|\beta_2|}\leq
2 |x\cdot\omega|^{1-|\beta_2|}\leq2 \langle x\cdot\omega\rangle^{1-|\beta_2|}$ on $\R^n\setminus\mathcal R$.
From \eqref{ssss} we then get 
 \beqsn
 |\partial_\xi^{\alpha}\partial_x^{\beta+e_j} \langle x\cdot\omega\rangle^{-1+1/s}|&\leq &
C_s A_0^{|\alpha|+|\beta|+1}\alpha!(\beta+e_j)!  |\xi|^{-|\alpha|} \langle x\cdot\omega\rangle^{1/s-2-|\beta|}.
\eeqsn
Formula \eqref{derbracketdot} is completely proved by induction.
\\
Coming now back to \eqref{dxi+1}, using \eqref{derbracketdot} and \eqref{derxixomega}, noting that whenever $|\beta_2|\geq 2$ the corresponding term in \eqref{dxi+1} is zero, we get
\beqsn
|\partial_\xi^{\alpha+e_j}\partial_x^\beta\lambda_2(x,\xi)|&\leq &
M\sum_{\alpha_1+\alpha_2=\alpha}\sum_{\beta_1+\beta_2=\beta}(\alpha+e_j)!\beta!C_s A_0^{|\alpha|+|\beta|+2}\langle x\cdot\omega\rangle^{1/s-1-|\beta_1|}|x|^{1-|\beta_2|}|\xi|^{-|\alpha|-1}
\\
&\leq & M(\alpha+e_j)!\beta!C_s A_0^{|\alpha|+|\beta|+2}\langle x\cdot\omega\rangle^{1/s-|\beta|}|\xi|^{-|\alpha|-1},
\eeqsn
that is \eqref{one}. Using \eqref{derbracketdot} and \eqref{stima:deromegai}, from \eqref{dx+1} we get:
\beqsn
|\partial_\xi^{\alpha}\partial_x^{\beta+e_j}\lambda_2(x,\xi)|&\leq & M\alpha!(\beta+e_j)!C_s A_0^{|\alpha|+|\beta|+2}\langle x\cdot\omega\rangle^{1/s-1-|\beta|}|\xi|^{-|\alpha|},
\eeqsn
and then \eqref{two} by definition of $\R^{2n}\setminus\mathcal R.$

\end{proof}

Taking into account the results of Lemmas \ref{lemma1} and \ref{lemma2} we can now define
\beqs\label{la}
\tilde{\lambda}(x,\xi)=-\lambda_1(x,\xi)\tilde\chi(x,\xi)-\lambda_2(x,\xi)(1-\tilde\chi(x,\xi)),\quad \tilde\chi(x,\xi)=\chi\left(\frac{2x\cdot\omega}{\langle x\rangle}\right),
\eeqs
where $\chi\in C^\infty_c(\R)$ is such that $0\leq \chi(t)\leq 1$, $t\chi'(t)\leq 0$, $\chi(t)=1$ for $|t|\leq 1/2$, $\chi(t)=0$ for $|t|\geq 1$, and
$|\chi^{(k)}(t)|\leq A_0^{k+1}k!^\mu$ for some $\mu >1$ to be chosen later on.

\begin{Lemma}\label{lemma3}
The function $\tilde\chi$ defined in \eqref{la} satisfies the following estimate:
\beqs
|\partial_\xi^\alpha\partial_x^\beta\tilde\chi(x,\xi)|\leq C^{|\alpha|+|\beta|+1}|\alpha+\beta|!^\nu\langle x\rangle^{-|\beta|}|\xi|^{-|\alpha|}, \quad (x,\xi)\in \R^{2n}.
\eeqs
\end{Lemma}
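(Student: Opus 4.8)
The plan is to write $\tilde\chi=\chi\circ\phi$ with
$$\phi(x,\xi):=\frac{2\,x\cdot\omega}{\px}=\frac{2\,x\cdot\xi}{|\xi|\,\px},$$
to show first that $\phi$ is a $\textbf{\textrm{SG}}$ symbol of order $(0,0)$ with \emph{analytic} (Gevrey-$1$) constants, and then to transport this property through the Gevrey-$\mu$ bounds on $\chi$ by means of the Fa\`a di Bruno formula. Note that $|\phi|\le 2$ everywhere, since $|x\cdot\omega|=|x\cdot\xi|/|\xi|\le|x|\le\px$; as for all the $\omega$-dependent objects of this section, the estimate is understood on $\R^n\times(\R^n\setminus\{0\})$, where $\omega=\xi/|\xi|$ is smooth.

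For the first point I would write $\phi/2=(x\cdot\omega)\,\px^{-1}$ and use that $x\cdot\omega=\sum_j x_j\omega_j(\xi)$ is affine in $x$, so that $\partial_x^{\beta'}(x\cdot\omega)=0$ as soon as $|\beta'|\ge 2$; hence Leibniz' rule reduces the computation of $\der\phi$ to the two contributions $\beta'=0$ and $\beta'=e_j$. For the factor $x\cdot\omega$ one invokes \eqref{derxixomega} and \eqref{stima:deromegai}; for $\px^{-1}$, which is real-analytic, the estimate \eqref{KB2} applied to $|(x,h)|^{-1}$ gives $|\partial_x^\gamma\px^{-1}|\le A_0^{|\gamma|+1}|\gamma|!\,\px^{-1-|\gamma|}$. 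Combining these, and absorbing the extra factor $|x|$ produced by the $\beta'=0$ term via $|x|\le\px$, one obtains
$$|\der\phi(x,\xi)|\le C_0^{|\alpha|+|\beta|+1}\,\alpha!\,\beta!\,\px^{-|\beta|}|\xi|^{-|\alpha|},$$
i.e. $\phi\in\textbf{\textrm{SG}}^{(0,0)}(\R^{2n})$ with analytic bounds.

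Next, applying the multivariate Fa\`a di Bruno formula to $\tilde\chi=\chi\circ\phi$, every term of $\der\tilde\chi$ is a combinatorial multiple of $\chi^{(k)}(\phi)\prod_{i=1}^k\partial^{\delta_i}\phi$ with $1\le k\le|\alpha+\beta|$ and multi-indices $\delta_i=(\alpha_i,\beta_i)\neq 0$, $\sum_i\delta_i=(\alpha,\beta)$. The crucial point is that the $\textbf{\textrm{SG}}$ weights recombine exactly as required, $\prod_i\px^{-|\beta_i|}|\xi|^{-|\alpha_i|}=\px^{-|\beta|}|\xi|^{-|\alpha|}$, while $|\chi^{(k)}(\phi)|\le A_0^{k+1}k!^\mu$ uses only $|\phi|\le 2$; moreover $\prod_i|\delta_i|!\le|\alpha+\beta|!$. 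Using in addition $k!^{\mu-1}\le|\alpha+\beta|!^{\mu-1}$ and the standard exponential-in-$|\alpha+\beta|$ bound on the number of partitions and on the Fa\`a di Bruno coefficients, summation over $k$ and over the partitions yields
$$|\der\tilde\chi(x,\xi)|\le C^{|\alpha|+|\beta|+1}\,|\alpha+\beta|!^{\mu}\,\px^{-|\beta|}|\xi|^{-|\alpha|},$$
which is the asserted estimate, since the Gevrey index $\mu$ of $\chi$ in \eqref{la} may be taken $\le\nu$. (In essence this is the classical fact that composing a Gevrey-$\mu$ function with an analytic $\textbf{\textrm{SG}}^{(0,0)}$ map remains a Gevrey-$\mu$ symbol of order $(0,0)$.)

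The only genuinely delicate point is the first one: the $\xi$-derivatives of $\omega=\xi/|\xi|$ are what generate the negative powers $|\xi|^{-|\alpha|}$, so one must track these simultaneously with the powers $\px^{-|\beta|}$ coming from $\px^{-1}$; once $\phi\in\textbf{\textrm{SG}}^{(0,0)}$ with analytic constants is established, the Fa\`a di Bruno step is routine and costs nothing beyond the Gevrey index of $\chi$. Alternatively, one could avoid Fa\`a di Bruno altogether and argue directly by induction on $|\alpha+\beta|$, differentiating $\tilde\chi$ once through the chain rule and invoking the estimate on $\phi$ together with the inductive hypothesis, exactly in the style of the proofs of Lemmas \ref{lemma1} and \ref{lemma2}.
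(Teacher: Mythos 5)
Your proposal is correct and follows essentially the same route the paper sketches: you write $\tilde\chi=\chi\circ\phi$ with $\phi(x,\xi)=2(x\cdot\omega)\px^{-1}$, establish analytic $\textbf{\textrm{SG}}^{(0,0)}$ bounds on $\phi$ via \eqref{derxixomega}, \eqref{stima:deromegai} and a $\px^{-1}$ estimate equivalent to \eqref{KB1}, and then apply Fa\`a di Bruno together with $|\chi^{(k)}|\leq A_0^{k+1}k!^\mu$ — exactly what the paper indicates, only with the details filled in. The exponent you obtain is $|\alpha+\beta|!^\mu$ (the lemma's statement writes $\nu$, evidently a slip, since Lemma~\ref{lemma4} records the same bound with $\mu$), and your parenthetical remark handles that discrepancy appropriately.
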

\begin{proof}
The proof follows by application of the tha Fa\`a di Bruno formula to the composed function $\chi(\eta(x,\xi))$ with $\eta(x,\xi)=2(x\cdot\omega)\langle x\rangle^{-1}$, and with the help of  \eqref{derxixomega}, \eqref{KB2} and of the estimate
\beqs \label{KB1}
|\partial^\gamma \langle y\rangle^m|\leq A_0^{|\gamma|+1}|\gamma|!\langle y\rangle^{m-|\gamma|}, \quad y\in\R^n, \, m \in \R, \, \gamma\in\Z^n_+, 
\eeqs
for some $A_0$ independent of $\gamma$ and $h$.  (cf. \cite{KB}). We leave the details to the reader.
\end{proof}

As an immediate consequence of Lemmas \ref{lemma1}, \ref{lemma2}, \ref{lemma3} we obtain the following result. The details of the proof are left to the reader.

\begin{Lemma}\label{lemma4} There exists a constant $C_s$ independent of $h$ such that the function $\tilde{\lambda}(x,\xi)$ defined in \eqref{la} satisfies the following estimate for every $\alpha,\beta\in\Z^n_+$: 
\beqs\label{derlambda}
|\partial_\xi^\alpha\partial_x^\beta\tilde{\lambda}(x,\xi)|&\leq& MC_s^{|\alpha|+|\beta|+1}|\alpha+\beta|!^\mu\langle x\rangle^{1/s-|\beta|}|\xi|^{-|\alpha|}.
\eeqs
for every $(x,\xi) \in \R^{2n}, |\xi| > 1$.
\end{Lemma}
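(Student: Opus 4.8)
The plan is to exploit the partition \eqref{la}, $\tilde\lambda=-\lambda_1\tilde\chi-\lambda_2(1-\tilde\chi)$, and to estimate each product by the Leibniz rule, feeding in Lemma \ref{lemma1} for $\lambda_1$, Lemma \ref{lemma2} for $\lambda_2$, and Lemma \ref{lemma3} for the cut-off. What makes this routine is the matching between the supports of the cut-offs and the regions on which the previous lemmas were proved: since $\chi(t)=0$ for $|t|\ge 1$, the function $\tilde\chi(x,\xi)=\chi(2x\cdot\omega/\px)$ and all its derivatives vanish outside $\mathcal R=\{|x\cdot\omega|\le\px/2\}$, which is precisely where \eqref{derlambda1} holds; and since $\chi(t)=1$ for $|t|\le 1/2$, the function $1-\tilde\chi$ and its derivatives vanish outside $\{|x\cdot\omega|\ge\px/4\}$. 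Everything is done on $|\xi|>1$, where $\omega=\xi/|\xi|$ is smooth and the negative powers of $|\xi|$ are harmless.

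For the first product I would expand
$$\partial_\xi^\alpha\partial_x^\beta(\lambda_1\tilde\chi)=\sum_{\alpha'\le\alpha,\ \beta'\le\beta}\binom{\alpha}{\alpha'}\binom{\beta}{\beta'}\bigl(\partial_\xi^{\alpha'}\partial_x^{\beta'}\lambda_1\bigr)\bigl(\partial_\xi^{\alpha-\alpha'}\partial_x^{\beta-\beta'}\tilde\chi\bigr),$$
which is supported in $\mathcal R$, insert \eqref{derlambda1} and the bound of Lemma \ref{lemma3}, and multiply. The weights combine correctly, $\px^{1/s-|\beta'|}\px^{-|\beta-\beta'|}=\px^{1/s-|\beta|}$ and $|\xi|^{-|\alpha'|}|\xi|^{-|\alpha-\alpha'|}=|\xi|^{-|\alpha|}$, and the lone factor $M$ coming from $\lambda_1$ matches the prefactor in \eqref{derlambda}. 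The factorials are controlled by $\alpha'!\beta'!\le|\alpha'|!\,|\beta'|!$ and the elementary bound $a!\,b!\,c!^{\,\nu}\le(a+b+c)!^{\,\nu}$ (valid for $\nu\ge 1$, since $a!b!c!\le(a+b+c)!$), applied with $c=|\alpha-\alpha'|+|\beta-\beta'|$, which produces $|\alpha+\beta|!^{\,\nu}$; since the exponent $\mu$ left free in \eqref{la} may be taken $\ge\nu$, this is $\le|\alpha+\beta|!^{\,\mu}$. Finally $\sum_{\alpha'\le\alpha,\,\beta'\le\beta}\binom{\alpha}{\alpha'}\binom{\beta}{\beta'}=2^{|\alpha|+|\beta|}$ and the constants of Lemmas \ref{lemma1} and \ref{lemma3} combine into a geometric factor, so after enlarging $C_s$ one obtains \eqref{derlambda} for $\lambda_1\tilde\chi$.

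The second product $\lambda_2(1-\tilde\chi)$ is handled by the same computation, using \eqref{derlambda2} and, on the terms where a derivative falls on the cut-off, $\partial(1-\tilde\chi)=-\partial\tilde\chi$ together with Lemma \ref{lemma3} (the remaining term just uses $|1-\tilde\chi|\le 1$). The single point requiring attention — and the main, rather modest, obstacle — is that \eqref{derlambda2} was established only on $\R^{2n}\setminus\mathcal R=\{|x\cdot\omega|>\px/2\}$, whereas $\supp(1-\tilde\chi)$ is contained only in the larger set $\{|x\cdot\omega|\ge\px/4\}$, which protrudes into $\mathcal R$. On that larger set, however, $1+|x\cdot\omega|^2\ge 1+\px^2/16$ forces $\langle x\cdot\omega\rangle\asymp\px$, and a glance at the proof of Lemma \ref{lemma2} shows that the hypothesis $(x,\xi)\notin\mathcal R$ entered only through replacements of powers of $|x|$ by powers of $|x\cdot\omega|$ (as in the steps following \eqref{sss} and \eqref{ssss}); such replacements survive, up to a fixed constant, on $\{|x\cdot\omega|\ge\px/4\}$. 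Hence \eqref{derbracketdot} and \eqref{derlambda2} continue to hold on $\supp(1-\tilde\chi)$, and the Leibniz estimate goes through as before. Summing the two contributions yields \eqref{derlambda}; the explicit tracking of the constants $A_0$ and $C_s$ is the same bookkeeping already carried out in Lemmas \ref{lemma1}--\ref{lemma3} and can safely be left to the reader.
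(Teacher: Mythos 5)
Correct, and it is precisely the Leibniz-plus-support-matching argument the paper has in mind when it declares Lemma \ref{lemma4} an ``immediate consequence'' of Lemmas \ref{lemma1}--\ref{lemma3} and leaves the details to the reader. You even flag and close the one gap the word ``immediate'' hides, namely that $\supp(1-\tilde\chi)\subset\{|x\cdot\omega|\ge\px/4\}$ spills over into $\mathcal R$, so Lemma \ref{lemma2} must first be extended to that slightly larger region via the comparability $\px/4\le\langle x\cdot\omega\rangle\le\px$ there; your check that the extension goes through with only a change of constants is right.
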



In order to apply the results of Section 2 we need a last step, namely to cut-off the function $\tilde{\lambda}(x,\xi)$ near $\xi =0$ in order to deal with a symbol in the  class $\textbf{\textrm{SG}}^{(0,1/s)}_{\mu}(\R^{2n})$. Let us then define
\beqs \label{la2}
\lambda(x,\xi) =\left(1-\chi (h^{-1}|\xi|)\right)\tilde{\lambda}(x,\xi)
\eeqs
where $\chi$ is a compactly supported function of class $\gamma^\mu(\R^n)$ with $\chi(t)=1$ for $|t| \leq 1$.
It is easy to verify that the symbol $\lambda$ still satisfies the estimates in Lemma \ref{lemma4} on all $\R^{2n}$ with $|\xi|^{-|\alpha|}$ replaced by $\langle \xi\rangle_h^{-|\alpha|}$.
This happens because the new symbol we are considering vanishes on the set $\{|\xi|\leq h\}$, and on the complementary set we have $|\xi|^{-|\alpha|}\leq \sqrt2^{|\alpha|}\langle\xi\rangle_h^{-|\alpha|}$. This implies in particular that $\lambda \in \textbf{\textrm{SG}}^{(0,1/s)}_{\mu}(\R^{2n})$.
More precisely, we have
\beqs\label{base}
|\partial_\xi^\alpha\partial_x^\beta\lambda(x,\xi)|\leq MC_s^{|\alpha|+|\beta|+1}|\alpha+\beta|!^\mu\langle x\rangle^{1/s-|\beta|}\langle\xi\rangle_h^{-|\alpha|},\quad (x,\xi)\in\R^{2n}
\eeqs
for every $h\geq 1,$ where the constants $M$ and $C_s$ are independent of $h$. \\

The following simple but crucial result will be the key of the proof of Theorem \ref{main}: 
\begin{Lemma}\label{lemmastar}
The symbol $\lambda$ in \eqref{la2} is such that $\ds\sum_{j=1}^n(\partial_{x_j}\lambda)(x,\xi){\xi_j}\leq -M\langle x\rangle^{1/s-1}|\xi|$.
\end{Lemma}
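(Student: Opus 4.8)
The plan is to establish the estimate first for the intermediate symbol $\tilde\lambda$ of \eqref{la} and then transfer it to $\lambda$ in \eqref{la2}. Set $\omega=\xi/|\xi|$, so that $\sum_{j=1}^n(\partial_{x_j}\mathcal L)\xi_j=|\xi|\,(\omega\cdot\nabla_x)\mathcal L$ for any symbol $\mathcal L$. Writing $-\tilde\lambda=\lambda_2+\tilde\chi\,(\lambda_1-\lambda_2)$ and applying $\omega\cdot\nabla_x$, I would use the defining relations \eqref{eq1}, \eqref{eq2}, i.e. $(\omega\cdot\nabla_x)\lambda_1=M\langle x\rangle^{1/s-1}$ and $(\omega\cdot\nabla_x)\lambda_2=M\langle x\cdot\omega\rangle^{1/s-1}$, together with $(\omega\cdot\nabla_x)(\lambda_1-\lambda_2)=M(\langle x\rangle^{1/s-1}-\langle x\cdot\omega\rangle^{1/s-1})$, to arrive at
\[
\sum_{j=1}^n(\partial_{x_j}\tilde\lambda)\xi_j=-M|\xi|\big(\tilde\chi\,\langle x\rangle^{1/s-1}+(1-\tilde\chi)\,\langle x\cdot\omega\rangle^{1/s-1}\big)-|\xi|\,(\lambda_1-\lambda_2)\,(\omega\cdot\nabla_x)\tilde\chi .
\]
Since $s>1$, the map $t\mapsto\langle t\rangle^{1/s-1}$ is decreasing in $|t|$; from $|x\cdot\omega|\le|x|$ we get $\langle x\cdot\omega\rangle^{1/s-1}\ge\langle x\rangle^{1/s-1}$, and because $0\le\tilde\chi\le1$ the bracket is a convex combination which is $\ge\langle x\rangle^{1/s-1}$. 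Hence the first term is already $\le-M\langle x\rangle^{1/s-1}|\xi|$, and it remains to show $(\lambda_1-\lambda_2)\,(\omega\cdot\nabla_x)\tilde\chi\ge0$.

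To see this, I would determine the sign of each factor through $x\cdot\omega$. By \eqref{lambda1} and \eqref{equivlambda2}, $\lambda_1-\lambda_2=\int_0^{x\cdot\omega}M\big(\langle x-\tau\omega\rangle^{1/s-1}-\langle x\cdot\omega-\tau\rangle^{1/s-1}\big)\,d\tau$; since $|x-\tau\omega|^2-|x\cdot\omega-\tau|^2=|x|^2-(x\cdot\omega)^2\ge0$ the integrand is $\le0$, so $(\lambda_1-\lambda_2)(x\cdot\omega)\le0$. Next, with $\tilde\chi=\chi(\eta)$ and $\eta=2(x\cdot\omega)\langle x\rangle^{-1}$, a direct computation gives $(\omega\cdot\nabla_x)\eta=2\langle x\rangle^{-3}\big(\langle x\rangle^2-(x\cdot\omega)^2\big)>0$, so $(\omega\cdot\nabla_x)\tilde\chi=\chi'(\eta)\,(\omega\cdot\nabla_x)\eta$ has the sign of $\chi'(\eta)$, and the structural condition $t\chi'(t)\le0$ forces $\eta\chi'(\eta)\le0$, i.e. $\chi'(\eta)$ has the sign of $-(x\cdot\omega)$. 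Thus $\lambda_1-\lambda_2$ and $(\omega\cdot\nabla_x)\tilde\chi$ have the same sign (both opposite to that of $x\cdot\omega$), so their product is nonnegative, whence $\sum_{j}(\partial_{x_j}\tilde\lambda)\xi_j\le-M\langle x\rangle^{1/s-1}|\xi|$.

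Finally, since $\lambda=(1-\chi(h^{-1}|\xi|))\tilde\lambda$ and the cut-off $1-\chi(h^{-1}|\xi|)$ is independent of $x$, belongs to $[0,1]$, and equals $1$ for $|\xi|$ outside a bounded set, while $\sum_j(\partial_{x_j}\tilde\lambda)\xi_j\le0$ everywhere, we conclude $\sum_j(\partial_{x_j}\lambda)\xi_j\le-M\langle x\rangle^{1/s-1}|\xi|$ (the only discrepancy is confined to a bounded frequency region, where it is a symbol of order $-\infty$ in $\xi$ and hence harmless for the sharp G{\aa}rding argument in Section \ref{sec4}). The main obstacle is the sign analysis of the cross term $(\lambda_1-\lambda_2)\,(\omega\cdot\nabla_x)\tilde\chi$: this is exactly the contribution absent from the naive transport identity $(\omega\cdot\nabla_x)\lambda_k=g_k$, and it is the reason why the cut-off $\chi$ is required to satisfy $t\chi'(t)\le0$ — with the opposite monotonicity this term could acquire the wrong sign and destroy the inequality; the rest reduces to the elementary monotonicity of $t\mapsto\langle t\rangle^{1/s-1}$ and to $|x\cdot\omega|\le|x|$.
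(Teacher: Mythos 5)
Your proof is correct and follows essentially the same route as the paper: reduce to $\tilde\lambda$, split the transport derivative into the convex combination $-\tilde\chi|\xi|g_1-(1-\tilde\chi)|\xi|g_2$ (controlled by $g_1\le g_2$) plus the cross term $-(\lambda_1-\lambda_2)(\omega\cdot\nabla_x)\tilde\chi\,|\xi|$, and kill the latter by the sign analysis through $x\cdot\omega$ of $\lambda_1-\lambda_2$ and of $\chi'$ via $t\chi'(t)\le 0$. Your remark about the low-frequency cutoff $1-\chi(h^{-1}|\xi|)$ is a fair point that the paper's proof passes over in silence (the stated inequality, taken literally, fails where $\chi(h^{-1}|\xi|)=1$); as you note, only the nonpositivity of $\sum_j(\partial_{x_j}\lambda)\xi_j$ and the sharp bound for large $|\xi|$ are actually used in Section \ref{sec4}, so the discrepancy is a bounded symbol and is indeed harmless.
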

\begin{proof}
Since the assertion involves only the derivatives with respect to $x$, it is sufficient to prove it for the function $\tilde{\lambda}$ defined by \eqref{la}.
By definition of $g_1,g_2$, and since $\langle x\cdot\omega\rangle\leq\langle x\rangle$, we have 
\beqs\label{utile}
(g_1-g_2)(x,\xi)=M(\langle x\rangle^{-1+1/s}-\langle x\cdot\omega\rangle^{-1+1/s})\leq 0,
\eeqs
and then
\beqs\label{serve}
(x\cdot\omega)(\lambda_1-\lambda_2)(x,\xi)=(x\cdot\omega)\int_0^{x\cdot\omega}(g_1-g_2)(x-\tau\omega,\xi)d\tau\leq 0.
\eeqs
Now by definition \eqref{la} of $\tilde \lambda$ and making use of \eqref{eq1}, \eqref{eq2} we have, omitting the dependence on $(x,\xi)$:
\beqsn
\sum_{j=1}^n(\partial_{x_j}\tilde\lambda){\xi_j}&=&\sum_{j=1}^n\left(-(\partial_{x_j}\lambda_1)\tilde\chi-\lambda_1\partial_{x_j}\tilde\chi-(\partial_{x_j}\lambda_2)(1-\tilde\chi)+\lambda_2\partial_{x_j}\tilde\chi\right){\xi_j}
\\
&=&-(\lambda_1-\lambda_2)\sum_{j=1}^n(\partial_{x_j}\tilde\chi)\xi_j-\tilde\chi\sum_{j=1}^n(\partial_{x_j}\lambda_1)\xi_j-(1-\tilde\chi)\sum_{j=1}^n(\partial_{x_j}\lambda_2)\xi_j
\\
&=&-(\lambda_1-\lambda_2)\chi'\left(\frac{2(x\cdot\omega)}{\langle x\rangle}\right)\sum_{j=1}^n\partial_{x_j}\frac{2(x\cdot\omega)}{\langle x\rangle}\xi_j-\tilde\chi|\xi|g_1-(1-\tilde\chi)|\xi|g_2.
\eeqsn
On the other hand
\beqsn
\sum_{j=1}^n\partial_{x_j}\frac{2(x\cdot\omega)}{\langle x\rangle}\xi_j&=&2\sum_{j=1}^n\left(\frac{\omega_j}{\langle x\rangle}-\frac{(x\cdot\omega)x_j}{\langle x\rangle^3}\right)\xi_j
\\
&=&2\left(\frac{|\xi|}{\langle x\rangle}-\frac{(x\cdot\xi)^2}{|\xi|\langle x\rangle^3}\right)=\frac{2|\xi|}{\langle x\rangle}\left(1-\left(\frac{x\cdot\omega}{\langle x\rangle}\right)^2\right),
\eeqsn
so we get
\beqsn
\sum_{j=1}^n(\partial_{x_j}\tilde\lambda){\xi_j}&=&-(\lambda_1-\lambda_2)\chi'\left(\frac{2(x\cdot\omega)}{\langle x\rangle}\right)
\frac{2|\xi|}{\langle x\rangle}\left(1-\left(\frac{x\cdot\omega}{\langle x\rangle}\right)^2\right)-\tilde\chi|\xi|g_1-(1-\tilde\chi)|\xi|g_2
\\
&\leq & 0-\tilde\chi|\xi|g_1-(1-\tilde\chi)|\xi|g_1=-|\xi|g_1,
\eeqsn
because $g_1\leq g_2$ by \eqref{utile} and
\beqsn
&&(\lambda_1-\lambda_2)\chi'\left(\frac{2(x\cdot\omega)}{\langle x\rangle}\right)
\frac{2|\xi|}{\langle x\rangle}\left(1-\left(\frac{x\cdot\omega}{\langle x\rangle}\right)^2\right)\geq 0\iff
\\
&&\iff(x\cdot\omega)^2(\lambda_1-\lambda_2)\chi'\left(\frac{2(x\cdot\omega)}{\langle x\rangle}\right)
\frac{2|\xi|}{\langle x\rangle}\left(1-\left(\frac{x\cdot\omega}{\langle x\rangle}\right)^2\right)\geq 0\iff
\\
&&\iff \left((x\cdot\omega)(\lambda_1-\lambda_2)\right) \left((x\cdot\omega)\chi'\left(\frac{2(x\cdot\omega)}{\langle x\rangle}\right)\right)
\frac{2|\xi|}{\langle x\rangle}\left(1-\left(\frac{x\cdot\omega}{\langle x\rangle}\right)^2\right)\geq 0,
\eeqsn
where this last inequality is true thanks to \eqref{serve}, the fact that $t\chi'(t)\leq 0\ \forall t$, and the fact that $|x\cdot\omega|\leq\langle x\rangle$. Lemma \ref{lemmastar} is proved.
\end{proof}

\section{The proof of Theorem \ref{main}}\label{sec4}
The proof of Theorem \ref{main} is based on the change of variable described in the previous section and on the application of Theorem \ref{conjugationthm} in the case when 
the operator $p(x,D)$ is replaced by each of the terms appearing in the expression of $P(t,x,\partial_t,\partial_x)$ in \eqref{P}. We observe that these terms can be regarded as operators with symbols which are continuous in $t$, analytic in $\xi$ and Gevrey of order $s_0$ with respect to $x$, where $s_0>1$ is the same appearing in the statement of Theorem \ref{main}. Namely they are \textbf{\textrm{SG}} operators with symbols which are analytic in $\xi$, hence in particular they belong to $C([0,T], \textrm{\textbf{SG}}^m_{\mu,s_0}(\R^{2n}))$ for every $\mu >1$ with $m=(m_1,m_2) \in \R^2, m_j \leq 2, j=1,2.$   
We observe moreover that since $\lambda_1$ and $\lambda_2$ are analytic, the function $\lambda(x,\xi)$ in \eqref{la2} inherits the same regularity of the cut-off function $\chi$ which can be chosen in $\gamma^{\mu}(\R^n)$ where $\mu$ is chosen so small that $s_0+\mu-1 < 1/(1-\sigma)$. Then the function $\Lambda$ in \eqref{formofLambda} is such that $\Lambda \in C^1([0,T],\textrm{\textbf{SG}}^{(0, 1/s)}_{\mu}(\R^{2n})$. Moreover, taking $h \geq h_o$ for some suitable $h_o \geq 1$ we have that $e^\lambda(t,x,D)$ is invertible and satisfies the assumptions of Theorem \ref{conjugationthm}. Consequently, $e^{\Lambda}(t,x,D)$ is invertible and by Theorem \ref{conjugationthm} we can state  the following:

\begin{Th}
\label{conjugation}
Let $s_0 >1,\sigma \in (0,1)$ such that $s_0  \in (1, 1/(1-\sigma))$ and let $\mu>1$ such that $\mu+s_0-1< 1/(1-\sigma)$. Let $p \in C([0,T],\textbf{\textrm{SG}}_{\mu,s_0}^m (\R^{2n}))$ for some $m=(m_1,m_2) \in \R^2,$ with $m_j\leq 1,j=1,2,$ and let $\Lambda$ be the symbol defined in \eqref{formofLambda} with $\lambda$ satisfying the condition \eqref{base} for some $s\in (s_0,1/{1-\sigma})$, $h\geq 1$ and for some constants $C_s$ and $M$ independent of $h$. Then there exists $h_o \geq 1$ such that if $h \geq h_o$, then
$$e^{\Lambda}(t,x,D_x)  p(t,x, D_x) (e^\Lambda(t,x,D_x))^{-1} = p(t,x,D_x) +q(t,x,D_x)+ r(t,x,D_x)+r_0(t,x,D_x)$$
where $$
q(t,x,\xi)= \sum_{|\alpha|=1} \partial_\xi^\alpha p(t,x,\xi)  (i\partial_x)^\alpha \Lambda(t,x,\xi) + \sum_{|\beta|=1}  D_x^\beta p(t,x,\xi) \partial_{\xi}^\beta \Lambda(t,x,\xi) $$
and $r \in C([0,T],\textbf{\textrm{SG}}_{\mu,s_0}^{(m_1-2, m_2-2(1-1/s))}(\R^{2n}))$ and $r_0 \in C([0,T], \mathcal{S}_{s_0+\mu-1}(\R^{2n})).$
\end{Th}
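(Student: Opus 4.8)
The statement is essentially a reformulation of Theorem \ref{conjugationthm} adapted to the $t$-dependent setting with the specific $\Lambda$ of \eqref{formofLambda}. The plan is to verify that the hypotheses of Theorem \ref{conjugationthm} are met uniformly (or at least continuously) in $t$, and then to invoke that theorem pointwise in $t$ while keeping track of the dependence of the remainder symbols on $t$.

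\begin{proof}[Sketch of proof]
First I would observe that $\Lambda(t,x,\xi) = k(t)\px_h^{1-\sigma} + \lambda(x,\xi)$ with $\lambda$ satisfying \eqref{base}; since $1-\sigma < 1/s$ (because $s < 1/(1-\sigma)$), the term $k(t)\px_h^{1-\sigma}$ satisfies an estimate of the form \eqref{questa} with $\frac1s$ in the exponent of $\px$, and so does $\lambda$ by \eqref{base} (here one uses that $\mu \leq$ the relevant Gevrey index, that $s > 2\mu -1$ follows from $\mu + s_0 - 1 < 1/(1-\sigma)$ together with $\mu \leq s_0$ and $s > s_0$, and that $|\xi|^{-|\alpha|}$ can be replaced by $\pxi_h^{-|\alpha|}$ on the support of the cut-off). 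Hence $\Lambda(t,\cdot,\cdot)$ satisfies \eqref{questa} with constants uniform in $t\in[0,T]$, because $k\in C^1[0,T]$ is bounded. This also guarantees, as noted in the excerpt, that for $h\geq h_o$ the operator $e^{\Lambda}(t,x,D)$ is invertible for each $t$, with inverse given by the Neumann series built from the $L^2$-small remainders $r_k(t,x,D)$ of Proposition \ref{compwithrev}.

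Next I would apply Theorem \ref{conjugationthm} with $p$ replaced by $p(t,\cdot,\cdot) \in \textbf{\textrm{SG}}_{\mu,s_0}^m(\R^{2n})$ for each fixed $t$, obtaining
$$e^{\Lambda}(t,x,D) p(t,x,D) (e^{\Lambda}(t,x,D))^{-1} = p(t,x,D) + q(t,x,D) + r(t,x,D) + r_0(t,x,D),$$
with $q$ given by \eqref{qexpression} (now with $\Lambda$ in place of $\lambda$, which is legitimate since only first derivatives of the conjugating symbol enter the first-order term of the expansion), $r \in \textbf{\textrm{SG}}_{\mu,s_0}^{(m_1-2,\,m_2-2(1-1/s))}(\R^{2n})$ and $r_0 \in \mathcal{S}_{s_0+\mu-1}(\R^{2n})$. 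The hypothesis $m_j \leq 1$ is what is needed so that $q$ lands in $\textbf{\textrm{SG}}_{\mu,s_0}^{(m_1-1,\,m_2-1+1/s)}$ and so that the calculus of Theorem \ref{conjugationthm} applies; the condition $s > \mu + s_0 - 1$ there is exactly $s > \mu + \nu -1$ with $\nu = s_0$, which holds by assumption.

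The only genuinely new point compared to Theorem \ref{conjugationthm} is the $t$-continuity of the remainders: I would check that $t \mapsto r(t,x,D)$ and $t \mapsto r_0(t,x,D)$ are continuous from $[0,T]$ into the respective symbol classes. This follows by inspecting the construction of $r$ and $r_0$ in the proof of Theorem \ref{conjugationthm} (oscillatory integral remainders of the asymptotic expansion, and a Neumann-series tail for the inverse): these are built by fixed algebraic and integral operations from $p(t,\cdot,\cdot)$ and $\Lambda(t,\cdot,\cdot)$, all of which depend continuously on $t$ by hypothesis ($a_j, b$ continuous in $t$, $k \in C^1$), and the seminorm estimates in the symbol classes are obtained with constants that are uniform on $[0,T]$ since $k$ and its derivative are bounded there. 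The main obstacle, and the step deserving the most care, is precisely this last verification — ensuring that the constants produced by the (rather involved) calculus of Theorem \ref{conjugationthm} can be taken uniform in $t$, and that the $t$-dependence passes continuously through the oscillatory integrals defining the remainder symbols; once this is granted, the result is immediate.
\end{proof}
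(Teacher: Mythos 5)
Your proof is correct and follows the same route as the paper: the paper states Theorem~\ref{conjugation} without a separate proof, presenting it as an immediate corollary of Theorem~\ref{conjugationthm} applied pointwise in $t$ (with $\nu=s_0$, noting that $\Lambda\in C^1([0,T],\textbf{\textrm{SG}}^{(0,1/s)}_\mu(\R^{2n}))$ and that $e^\Lambda(t,x,D)$ is invertible for $h\geq h_o$), exactly as you do. Your extra verification that the seminorm constants can be taken uniform in $t$ and that the $t$-dependence passes continuously through the oscillatory-integral remainders is precisely the implicit content the paper leaves unstated.
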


\textit{Proof of Theorem \ref{main}.}

First of all, by 
\eqref{conjdelta} and applying Theorem \ref{conjugation}, using the assumptions on $a_j$ and $b$,
we get
\beqs\nonumber
P_\Lambda(t,x,\partial_t, \partial_x)&=&e^{\Lambda}(t,x,D_x) P(t,x,\partial_t,\partial_x) (e^\Lambda)^{-1}(t,x,D_x)
\\
\nonumber
&=&\partial_t-i\Delta_x -k'(t)\px_h^{1-\sigma}-\sum_{j=1}^n2(\partial_{x_j}\Lambda)D_{x_j}+r'_0(t,x,D)
\\
\nonumber
& &+e^\Lambda\left(\sum_{j=1}^na_j(t,x)\partial_{x_j}+ b(t,x)\right)(e^\Lambda)^{-1}
\\
\label{ht}
&=&\partial_t-i\triangle_x+A_{\Lambda}(t,x,D_x)+\tilde r_0(t,x,D_x)
\eeqs
with $\tilde r_0(t,x,\xi) \in C[0,T], \textbf{\textrm{SG}}^{(0,0)}_{\mu,s_0}(\R^{2n}))$
and 
$$A_\Lambda(t,x,D)=i\ds\sum_{j=1}^na_j(t,x)D_{x_j}+b(t,x)-2\ds\sum_{j=1}^n(\partial_{x_j}\Lambda)(t,x,D)D_{x_j}-k'(t)\px_h^{1-\sigma}+r(t,x,D),$$
according to \eqref{biscio}, where $r(t,x,\xi)\in C([0,T], \textbf{\textrm{SG}}^{(0,1-\sigma)}_{\mu,s_0}(\R^{2n}))$. 
Let us now derive an energy estimate for the operator $P_\Lambda$. We have
\beqsn
\ds\frac d {dt} \|v\|^2&=&2\Re\langle P_\Lambda v, v\rangle-\langle (A_\Lambda+A_\Lambda^\ast)v, v\rangle-2\Re\langle \tilde r_0v, v\rangle.
\eeqsn
Let us now compute the symbol of the operator $A_\Lambda+A_\Lambda^\ast$. 
First of all we observe that
\beqsn
&&ia_j(t,x)D_{x_j}+(ia_j(t,x)D_{x_j})^\ast=
\\
&&=\textrm{op}\left(i\Re a_j(t,x)\xi_j-\Im a_j(t,x)\xi_j-i\Re a_j(t,x)\xi_j-\Im a_j(t,x)\xi_j+\tilde r_1(t,x,\xi) \right)
\\
&&=\textrm{op}\left(-2\Im a_j(t,x)\xi_j+\tilde r_1(t,x,\xi)+\tilde{\tilde r}_2(t,x,\xi)\right),
\eeqsn
with $\tilde r_1 \in C([0,T], \textbf{\textrm{SG}}^{(0,0)}_{\mu,s_0}(\R^{2n})$ and $\tilde{\tilde r}_2(t,x,D)$ a regularizing operator; similarly $$\left(b+b^\ast\right)(t,x)=2\Re b(t,x);$$ moreover the real valued operators $(\partial_{x_j}\Lambda)(t,x,D)D_{x_j}$ are such that $$\left(\partial_{x_j}\Lambda)(t,x,D)D_{x_j}\right)^\ast=(\partial_{x_j}\Lambda)(t,x,D)D_{x_j}+\tilde r_2(t,x,D),$$ with $\tilde r_2$ of order $(0,0)$. This gives
\beqsn
A_\Lambda+A_\Lambda^\ast&=&\textrm{op}\left(-2\sum_{j=1}^n\left[\Im a_j(t,x)+2(\partial_{x_j}\Lambda)(t,x,\xi)\right]{\xi_j}\right)
\\
&&+2\left(\Re b(t,x)-k'(t)\px_h^{1-\sigma}+\Re r(t,x,D_x)\right)+r'(t,x,D)
\eeqsn
with $r'(t,x,\xi) \in C([0,T], \textbf{\textrm{SG}}^{(0,0)}_{\mu,s_0}(\R^{2n}))$, and so
\beqs\label{quasi}
\ds\frac d {dt} \|v\|^2&=&2\Re\langle P_\Lambda v, v\rangle
-2\left\langle \textrm{op}\left(-\sum_{j=1}^n\left[\Im a_j(t,x)+2(\partial_{x_j}\Lambda)(t,x,\xi)\right]{\xi_j}\right)v, v\right\rangle
\\\nonumber
&&-2\langle\left(\Re b(t,x)-k'(t)\px_h^{1-\sigma}+\Re r(t,x,D_x)\right)v,v \rangle-\langle r'(t,x,D_x)v, v\rangle.
\eeqs
On one hand, by assumption \eqref{imm} and Lemma \ref{lemmastar}, recalling that we are going to choose a non-negative function $k(t)$, we have 
\beqs\nonumber
\hskip-0.2cm\sum_{j=1}^n\left[\Im a_j(t,x)\xi_j+2(\partial_{x_j}\Lambda)(t,x,\xi){\xi_j}\right]&=&\sum_{j=1}^n\left[\Im a_j(t,x)\xi_j+2(\partial_{x_j}\lambda)(x,\xi){\xi_j}+2k(t)\partial_{x_j}\px_h^{1-\sigma}\xi_j\right]
\\
\nonumber
&\leq& C\langle x\rangle^{-\sigma}|\xi|-2M\langle x\rangle^{1/s-1}|\xi|+2(1-\sigma)k(0)\px_h^{-\sigma}|\xi|
\\
\nonumber
&\leq & \left(C\langle x\rangle^{1/s-1}-2M\langle x\rangle^{1/s-1}+2(1-\sigma)k(0)h^{-\sigma+1-\frac{1}{s}}\px_h^{1/s-1}\right)|\xi|
\\
 \label{estimateremainders}
&\leq & \left(C-2M+2(1-\sigma)k(0)h^{-\sigma+1-\frac{1}{s}}\right)\langle x\rangle^{1/s-1}|\xi|
\eeqs
where $C>0$ is the constant in Theorem \ref{main}.

%
On the other hand, we have
\beqs \label{bruttiresti}
k'(t)\px_h^{1-\sigma}-\Re b(t,x)-\Re r(t,x,\xi)\leq \left(k'(t)+N+N(|k(t)|+ M)\right) \px_h^{1-\sigma}
\eeqs
where $N$ is a positive constant independent of $k(t)$ and $M$. To let the symbol on the right hand side of \eqref{bruttiresti} be non-positive it is suffcient to find $k(t)$ which solves the equation
$$k'(t)+N|k(t)|+ N(M+1) =0,$$
that is
$$k(t)= e^{-Nt}k(0) - (M+1)(1-e^{-Nt}),$$
where we choose $k(0)$ so large that $k(t) \geq 0$ for $t \in [0,T].$

Let us now fix $M>C$ and then $k(0)\geq(M+1)(e^{NT}-1)$. With these choices, we have that there exists an $h_o\geq1$ such that for $h\geq h_o$ the transformation $e^\Lambda$ is invertible and $P_\Lambda$ has the form \eqref{ht}, and moreover $k(t)\geq 0\ \forall t\in [0,T]$. Let us fix $h\geq h_o$ large enough to have $(1-\sigma)k(0)h^{-\sigma+1-\frac{1}{s}}\leq C/2$. With this choice of $h$, on one hand we have from \eqref{estimateremainders}
\begin{equation}
\label{minorazione1}\left(C-2M+2(1-\sigma)k(0)h^{-\sigma+1-\frac{1}{s}}\right)\langle x\rangle^{1/s-1}|\xi|\leq \left(2C-2M\right)\langle x\rangle^{1/s-1}|\xi|\leq 0
\end{equation}
thanks to the choice $M>C;$ on the other hand, by the choice of $k(t)$, from  \eqref{bruttiresti} we get
\begin{equation} \label{minorazione2}
k'(t)\px_h^{1-\sigma}-\Re b(t,x)-\Re r(t,x,\xi)\leq 0.
\end{equation}

Then by applying the sharp G{\aa}rding inequality we obtain
\beqsn
\left\langle \textrm{op}\left(-\sum_{j=1}^n\left[\Im a_j(t,x)+2(\partial_{x_j}\Lambda)(x,\xi)\right]{\xi_j}\right)v, v\right\rangle\geq 0\quad\forall v\in \mathscr{S}(\R^n),
\\
\left\langle  \Re b(t,x)-k'(t)\px_h^{1-\sigma}+\Re r(t,x,D_x))v,v \right\rangle \geq 0 \qquad \forall v \in \mathscr{S}(\R^n).
\eeqsn
 Hence we get
$$\ds\frac d {dt} \|v\|^2\leq C_0\left(\|P_\Lambda v \|^2+\|v\|^2\right)$$
for a positive constant $C_0$ and Gronwall's lemma gives the $L^2$-energy estimate: 
 \beqs\label{enL2}
 \|v(t)\|^2\leq c\left(\|v(0)\|^2+\int_0^t\|P_\Lambda v(\tau)\|^2d\tau\right),\quad\forall t\in[0,T]
 \eeqs
for a suitable constant $c>0$ and for all $v\in C([0,T]; \mathscr{S}(\R^n))$. Similarly, fixed $m =(m_1,m_2) \in \R^2$ and differentiating $\|v(t)\|^2_{H^m}$, we obtain the same estimates with the $L^2$-norms replaced by the $H^m$-norms. This implies in particular that the Cauchy problem \eqref{CP2} is well posed in $\mathscr{S}(\R^n)$. 
We want to show now that the same holds replacing $P_\Lambda$ by the operator 
$$\widetilde{P}_\Lambda = \Pi_{m,\rho,s, \theta} P_\Lambda  \Pi_{m,\rho,s,\theta}^{-1},$$
where $ \Pi_{m,\rho,s, \theta}$ is defined by \eqref{pigr}.
Now, it is easy to verify that by Propositions \ref{conjxi} and \ref{congx} we have, modulo terms of order $(0,0)$:
$$\Pi_{m,\rho,s, \theta} (-i\Delta)  \Pi_{m,\rho,s,\theta}^{-1} =-i\Delta + q(x,D)$$ with
\begin{equation}
\label{termineq}q(x,\xi)= \textrm{op} \left( 2 \rho_2 \sum_{k=1}^n \partial_{x_k} \px^{\frac{1}{s}} \xi_k \right)
\end{equation} satisfying the estimate
$$|q(x,\xi)| \leq \frac{2\rho_2}{s} \px^{1/s-1}|\xi|,$$
 whereas 
$$\Pi_{m,\rho,s, \theta} A_\Lambda \Pi_{m,\rho,s,\theta}^{-1} = A_\Lambda + 
 r_1(t,x,D) + r_2(t,x,D), $$
where $r_1 \in C([0,T], \textrm{\textbf{SG}}^{(1/\theta, -\sigma)}(\R^{2n}))$ and $r_2 \in C([0,T], \textrm{\textbf{SG}}^{(0, 1/s-\sigma)}(\R^{2n}))$.
More precisely, $r_1$ is the remainder of the conjugation of $A_\Lambda$ with $e^{\rho_1\pxi^{1/\theta}}$ and $e^{-\rho_1\pxi^{1/\theta}}$, then it is easy  to verify by \eqref{sviluppoconiugazionexi} that it satisfies the estimate:
$$|r_1(t,x,\xi)|\leq C' \px^{-\sigma} |\xi| \pxi^{1/\theta-1} \leq C''\px^{-\sigma}|\xi| h^{1/\theta-1}.$$
The term $r_2$ is the remainder of the conjugation of $A_\Lambda$ with $e^{\rho_2\px^{1/s}}$ and $e^{-\rho_2\px^{1/s}}$, hence it satisfies the estimate:
$$|r_2(t,x,\xi)|\leq N'\px^{1-\sigma}$$
for some positive constant $N'$ independent of $M$ and $k(t)$. Hence the additional terms $q$ and $r_1$ can be treated as in \eqref{minorazione1} possibly enlarging $M$ and taking $M>C+\rho_2/s$ but independent of $\rho_1$, whereas the term $r_2$ can be treated as the other terms in \eqref{minorazione2}.
We finally obtain the energy estimate:
 \beqs\label{enHmtilde}
 \|v(t)\|_{H^m}^2\leq c\left(\|v(0)\|_{H^m}^2+\int_0^t\|\widetilde{P}_\Lambda v(\tau)\|_{H^m}^2d\tau\right),\quad\forall t\in[0,T].
 \eeqs
for every $v \in C^1([0,T], \mathscr{S}(\R^n)).$ This implies that the Cauchy problem \eqref{CP2} is well posed in $\mathcal{S}_s^\theta(\R^n)$ and in $\Sigma_s^\theta(\R^n)$ and the solution $v$ satisfies the energy estimate:
 \beqs \label{eninfinite}
 \|v(t)\|_{H^m_{\varrho,s,\theta}}^2\leq c\left(\|v(0)\|_{H^m_{\varrho,s,\theta}}^2+\int_0^t\|f_\Lambda (\tau,\cdot)\|_{H^m_{\varrho,s,\theta}}^2d\tau\right)\quad\forall t\in[0,T].
 \eeqs
Indeed, if $f_\Lambda \in C([0,T], H^m_{\rho,s,\theta})$ and $g \in H^m_{\rho,s,\theta}$ for every $m \in \R^2$ and for some $\rho =(\rho_1, \rho_2) \in \R^2$ with $\rho_j >0, j =1,2,$ then $\Pi_{m,\rho,s}f \in C([0,T], \mathscr{S}(\R^n))$ and $ \Pi_{m,\rho,s} g \in \mathscr{S}(\R^n)$, hence the Cauchy problem for the operator $\widetilde{P}_\Lambda$ and data $\Pi_{m,\rho,s}f$ and $\Pi_{m,\rho,s}g$ has a unique solution $v \in C^1([0,T], \mathscr{S}(\R^n))$ satisfying \eqref{enHmtilde}. But this implies that the function $w = \Pi_{m,\rho,s}^{-1}v \in C^1([0,T], H^m_{\rho,s,\theta}(\R^n))$ is a solution of \eqref{CP2} and satisfies \eqref{eninfinite}.

Let us finally come back to our Cauchy problem \eqref{CP}, which is equivalent to \eqref{CP2} by the change of variable \eqref{change}, with $\Lambda$ of order $(0,1/s)$ and $s \in (s_0, 1/(1-\sigma)).$ 

For all $g\in H^m_{\rho,s,\theta}$ and $f\in C([0,T]; H^m_{\rho, s,\theta})$, by Proposition \ref{continforder} we have that  for every $\delta >C (\Lambda)$, $ f_\Lambda \in C([0,T]; H^m_{\rho-\delta e_2, s,\theta})$ and $g_{\Lambda} \in H^m_{\rho-\delta e_2, s, \theta}$ . Then, if $v$ is the unique solution of the Cauchy problem \eqref{CP2}, then the function $u= (e^\Lambda)^{-1} v$ solves the problem \eqref{CP} and satisfies the following energy estimate:
\beqsn
\|u\|_{H^m_{(\rho_1,\rho_2-2\delta),s,\theta}}^2&=&\|(e^\Lambda)^{-1} v\|_{H^m_{(\rho_1,\rho_2-2\delta),s,\theta}}^2
\leq c\| v\|_{H^m_{(\rho_1,\rho_2-\delta),s, \theta}}^2
\\
&\leq& c\left(\|e^\Lambda g \|_{H^m_{(\rho_1,\rho_2-\delta),s, \theta}}^2+\int_0^t\|e^\Lambda f(\tau)\|_{H^m_{(\rho_1,\rho_2-\delta),s, \theta}}^2d\tau\right)
\\
&\leq& c\left(\|g\|_{H^m_{\rho,s, \theta}}^2+\int_0^t\| f(\tau)\|_{H^m_{\rho,s, \theta}}^2d\tau\right), \qquad t\in[0,T]
\eeqsn
for some positive $\delta$.  Theorem \ref{main} is then proved.

\qed

\begin{Rem}\label{critical} By small changes in the proof, in the critical case $s=1/(1-\sigma)$, under the same assumptions we can show local in time well-posedness for the Cauchy problem \eqref{CP}, that is we can show that the solution $u$ is in $C([0,T^\ast], H^m_{(\rho_1, \rho_2-\bar \delta),s,\theta})$ for some $T^\ast\leq T$. More precisely, taking $s=1/(1-\sigma)$,
formula \eqref{estimateremainders} turns into
$$\sum_{j=1}^n\left[\Im a_j(t,x)\xi_j+2(\partial_{x_j}\Lambda)(t,x,\xi){\xi_j}\right]\leq\left(C-2M+2(1-\sigma)k(0)\right)\langle x\rangle^{1/s-1}|\xi|.$$
Taking into account also the term $q(x,D)$ in \eqref{termineq}, we have to choose $M\geq C/2+(1-\sigma)k(0)+\rho_2/s$. Then since we want 
$k(0)\geq (M+1)(e^{NT}-1)$, then we have to take
$$k(0)\geq \ds\frac{(C/2+1+\rho_2/s)(e^{NT}-1)}{1-(1-\sigma)(e^{NT}-1)}.$$
This can be done only if $T<\frac1N\ln\left(1+\frac1{1-\sigma}\right)$, that is only locally in time.
\end{Rem}

\section{Examples and concluding remarks}
In this section we give some examples showing that the phenomenon of the loss of decay really appears in the problem \eqref{CP}. Moreover, we show by a counterexample that the bound $s=1/(1-\sigma)$ is sharp. Finally, we discuss the possibility to obtain solutions of \eqref{CP} with loss of regularity but no loss of decay, stating a result which can be easily proved combining the argument of the proof of Theorem \ref{main} with the techniques used in \cite{KB}; since the proof of this result is a mere repetition of the argument of the proof of Theorem \ref{main} for a different choice of $\Lambda$, it is omitted. We conclude the paper leaving to the reader an open question.
\\ 

\begin{Ex}\label{e1}
Let $T>0$, $s>1$, $\sigma\in (0,1)$ such that $s< \ds\frac{1}{1-\sigma}$, and consider the Cauchy problem
\begin{equation}\label{CPex}\begin{cases}
\partial_t u -i \partial^2_x u +i t(1-\sigma)x\px^{-\sigma-1} \partial_x u + b(t,x)u=0  \qquad (t,x) \in [0,T] \times \R, \\
u(0,x)= g(x):= e^{-\px^{\frac{1}{s}}} \qquad \qquad \qquad \qquad \qquad \qquad \qquad x \in \R, 
\end{cases} \end{equation}
where $$b(t,x)= -\px^{1-\sigma}+i c(t,x)$$ and 
\begin{eqnarray*}c(t,x)&=& \frac{1}{s}x \px^{1/s-2}\left( t(1-\sigma)x \px^{-\sigma-1}-\frac{1}{s}x \px^{1/s-2} \right)
\\
&&+t(1-\sigma) \px^{-\sigma-1}-\frac{1}{s} \px^{1/s-2}- t(1-\sigma^2)x^2 \px^{-\sigma-3}- \frac{1}{s}\left(\frac{1}{s}-2\right)x^2 \px^{1/s-4} . 
\end{eqnarray*}
We notice that the function $b$ satisfies the condition \eqref{reb}. Moreover the coefficient $a(t,x)= i t(1-\sigma)x\px^{-\sigma-1} $ is purely imaginary and satisfies the condition \eqref{imm}.
Finally, $g \in H^{(0,-\tau)}_{(0,1), s, \theta}(\R)$ for every $\tau >1/2$ and $\theta >1$ since
$$\px^{-\tau} e^{\px^{1/s}}g(x) = \px^{-\tau} \in L^2(\R).$$
Then the assumptions of Theorem \ref{main} are all satisfied. It is easy to verify that the problem \eqref{CPex} admits the solution
\begin{equation}
\label{solutionex}
u(t,x)= e^{t \px^{1-\sigma}-\px^{1/s}} \notin H^{(0,-\tau)}_{(0,1), s, \theta}(\R)
\end{equation}
for any $t\in (0,T]$ since
 $$\px^{-\tau} e^{\px^{1/s}} u(x) = \px^{-\tau}e^{t \px^{1-\sigma}} \notin L^2(\R).$$
However, we have that $u \in C([0,T],  H^{(0,-\tau)}_{(0,1-\bar \delta), s, \theta}(\R))$ for every $\bar \delta>0$. Hence we obtain that the solution does not present a weaker regularity with respect to the initial datum $g$ but a weaker decay at infinity. In this case the loss $\bar \delta$ is arbitrarily small.
\end{Ex}
\begin{Ex}
We observe that Example \ref{e1} is valid also in the critical case $s=1/(1-\sigma)$. In this case, rephrasing it in terms of $\sigma$ we obtain that the Cauchy problem
\begin{equation}\label{CPex2}\begin{cases}
\partial_t u -i \partial^2_x u +i (t-1)(1-\sigma)x\px^{-\sigma-1} \partial_x u + b(t,x)u=0  \qquad (t,x) \in [0,T] \times \R, \\
u(0,x)= g(x):= e^{-\px^{1-\sigma}} \qquad \qquad \qquad \qquad \qquad \qquad \qquad x \in \R, 
\end{cases} \end{equation}
with
$b(t,x)= -\px^{1-\sigma}+i\left[(1-\sigma)(t-1)\px^{-\sigma-1}-(1-\sigma^2)(t-1)x^2 \px^{-\sigma-3}\right]$ admits the solution
$$u(t,x)= e^{(t-1)\px^{1-\sigma}} \in C([0,T], H^{(0,-\tau)}_{(0,1-T),1/(1-\sigma),\theta}(\R)).$$
In this case Theorem \ref{main} holds with the loss of decay $\bar \delta=T$.
\end{Ex}
\begin{Ex}
With minor changes in Example \ref{e1} the reader can easily verify that the function $$u(t,x)=e^{t\px^{1-\sigma}+\px^{1/s}}$$ with $s\leq \ds\frac1{1-\sigma}$ solves the Cauchy problem with exponentially growing initial datum $g(x)=e^{\px^{1/s}}$ for the equation
$$\partial_t u -i \partial^2_x u +i t(1-\sigma)x\px^{-\sigma-1} \partial_x u + b(t,x)u=0$$
where $b(t,x)=-\px^{1-\sigma}+ic(t,x)$ with
\begin{eqnarray*}c(t,x)&=& \frac{1}{s}x \px^{1/s-2}\left( t(1-\sigma)x \px^{-\sigma-1}+\frac{1}{s}x \px^{1/s-2} \right)
\\
&&+t(1-\sigma) \px^{-\sigma-1}+\frac{1}{s} \px^{1/s-2}- t(1-\sigma^2)x^2 \px^{-\sigma-3}+ \frac{1}{s}\left(\frac{1}{s}-2\right)x^2 \px^{1/s-4} . 
\end{eqnarray*}
Also in this situation the assumptions of Theorem \ref{main} are satisfied and  we have  a solution whose growth at infinity is stronger than the growth of the Cauchy datum.
\end{Ex}
\begin{Rem}
Example \ref{e1} shows also that the value $s=1/(1-\sigma)$ is a sharp threshold for the well posedness. In fact, if we assume $s>1/(1-\sigma)$, then we still have
$g \in H^{(0,-\tau)}_{(0,1),s,\theta}(\R)$ and the solution of \eqref{CPex} is still expressed by the function $u$ in \eqref{solutionex} but in this case we cannot find any $\bar \delta>0$ such that
$u \in H^{(0,-\tau)}_{(0,1-\bar \delta),s,\theta}(\R)$
since for every $\bar \delta >0$ we have
$$\px^{-\tau} e^{(1-\bar \delta) \px^{1/s}} u(x) = \px^{-\tau}e^{t \px^{1-\sigma}-\bar \delta \px^{1/s}} \notin L^2(\R)$$
if $s >1/(1-\sigma).$
\end{Rem}

The choice of the function $\Lambda(t,x,\xi)$ in Section 2 has a key role in the proof of Theorem \ref{main} and the main effect is to concentrate all the loss in the spaces $H^m_{(\rho_1, \rho_2), s, \theta}$ in the second index $\rho_2$. On the other hand, if we assume the coefficients $a_j$ and $b$ bounded in $(t,x)$ and use the function $\Lambda$ in \cite[Formula (2.15)]{KB} we can recapture the main results of \cite{KB} in the more general functional setting of our paper repeating readily the argument in the proof of Theorem \ref{main}, that is we obtain a solution which presents a loss of regularity with respect to the initial data but with the same behavior at infinity. More precisely, the following result holds. The proof is omitted for the sake of brevity.

\begin{Th} \label{main2}
Let $s_0>1, \sigma \in (0,1)$ such that $s_0 < 1/(1-\sigma)$ and let $P(t,x,\partial_t,\partial_{x})$ be an operator of the form \eqref{P} with $a_j$ and $b$ continuous with respect to $t$ and satisfying for all $(t,x)\in[0,T]\times\R^n$, $\beta \in \N^n$ and $1\leq j\leq n$ the following conditions:
\beqs
\label{immreg}
|\partial_x^\beta ( \Im a_j)(t,x)|&\leq& \ds C^{|\beta|+1}\beta!^{s_0}\px^{-\sigma-|\beta|}, 
\\
\label{rereg}
|\partial_x^\beta ( \Re a_j)(t,x)|&\leq& \ds C^{|\beta|+1}\beta!^{s_0}\px^{-|\beta|}
\\
\label{rebreg}
|\partial_x^\beta  b(t,x)|&\leq& \ds C^{|\beta|+1}\beta!^{s_0} \px^{-|\beta|}, 
\eeqs
for some positive constant $C$ independent of $\beta$.
Let moreover $f\in C([0,T]; H^m_{\rho, s,\theta}(\R^n))$ and $g\in H^m_{\rho, s,\theta}(\R^n)$ for some $s >s_0$, $\theta \in (s_0, 1/(1-\sigma))$ and $\rho=(\rho_1,\rho_2),m=(m_1,m_2) \in \R^2$. Then there exists $\bar \delta=\bar \delta (\theta, \rho_1)>0$ such that the Cauchy problem \eqref{CP} admits a unique solution $u\in C([0,T];H^m_{(\rho_1-\bar \delta,\rho_2), s,\theta}(\R^n))$
which satisfies:
\begin{equation}\label{enestreg}
\|u(t)\|^2_{H^m_{(\rho_1-\bar \delta,\rho_2),s,\theta}}\leq C_s\left(\|g\|_{H^m_{\rho,s,\theta}}^2+\int_0^t \|f(\tau)\|_{H^m_{\rho,s,\theta}}^2 d\tau \right),
\end{equation}
for  $t\in [0,T]$ and for some $C_s>0$. 
\end{Th}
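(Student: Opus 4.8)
\textit{Proof of Theorem \ref{main2} (plan).}
The plan is to follow, with the obvious modifications, the proof of Theorem \ref{main} developed in Sections \ref{sec3}--\ref{sec4}, the only substantial change being the choice of the conjugating function $\Lambda$: one now takes it as the symbol of \cite[Formula (2.15)]{KB}, read inside the $\textbf{\textrm{SG}}$--calculus of Section \ref{sec2}, whose effect is to concentrate the whole loss in the first index $\rho_1$ of the scale $H^m_{(\rho_1,\rho_2),s,\theta}(\R^n)$ rather than in $\rho_2$. So, after the change of variable $v(t,x)=e^{\Lambda}(t,x,D)u(t,x)$ as in \eqref{change}, which reduces \eqref{CP} to the auxiliary problem \eqref{CP2} for $P_\Lambda=e^\Lambda P(e^\Lambda)^{-1}$, one takes (in analogy with \eqref{formofLambda} but with $\px_h$ replaced by $\pxi_h$)
\[
\Lambda(t,x,\xi)=k(t)\,\pxi_h^{1-\sigma}+\lambda(x,\xi),
\]
with $k\in C^1([0,T])$ nonnegative and nonincreasing, $h\ge 1$ large, and $\lambda$ the $\textbf{\textrm{SG}}$--counterpart of the phase in \cite[Formula (2.15)]{KB}: explicitly, $\lambda$ is built, in perfect analogy with Lemmas \ref{lemma1}--\ref{lemmastar} but with the roles of $x$ and $\xi$ interchanged, by integrating $-\sum_j(\Im a_j)(t,x)\,\omega_j$ along the bicharacteristics $\dot x=2\xi$ of $-i\Delta_x$ and localizing the result to the conic region $\{|x|\lesssim\pxi_h^{\kappa}\}$ of the phase space by a $\gamma^\mu$ cut-off, with $\mu>1$ so small that $s_0+\mu-1<1/(1-\sigma)$ and $\kappa$ chosen with $1\le\kappa\le\frac{1}{\theta(1-\sigma)}$. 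The availability of such an exponent $\kappa$ is exactly the hypothesis $\theta<1/(1-\sigma)$, which therefore plays here the role that $s\le 1/(1-\sigma)$ (i.e. $\px^{-\sigma}\le\px^{1/s-1}$) played in Theorem \ref{main}. Arguing as in Section \ref{sec3} one verifies that $\lambda\in\textbf{\textrm{SG}}^{(1/\theta,0)}_{\mu}(\R^{2n})$, that the analogue of Lemma \ref{lemmastar} holds in the form that $\sum_j(\partial_{x_j}\lambda)(x,\xi)\xi_j+\sum_j(\Im a_j)(t,x)\xi_j$ is a symbol of order $(1-\sigma,0)$ on all of $\R^{2n}$ (the compensation being exact on the support of the cut-off, while off it $\sum_j\Im a_j\xi_j$ is already of size $\pxi_h^{1-\sigma}$), and -- for $h$ large -- that $e^\Lambda(t,x,D)$ is invertible, maps $H^m_{\rho,s,\theta}(\R^n)$ continuously into $H^m_{\rho-\bar\delta e_1,s,\theta}(\R^n)$ for every $\bar\delta>C(\Lambda):=\sup\lambda/\pxi_h^{1/\theta}$, and satisfies the assumptions of the conjugation theorem, used here in the $\xi$--$x$ dual form obtained from Theorem \ref{conjugationthm} (and Theorem \ref{conjugation}) by conjugation with $\mathscr F$, which interchanges the two indices of $H^m_{\rho,s,\theta}$ (cf. Section \ref{sec2}).

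Next, by \eqref{conjdelta} (for $-i\Delta_x$; the computation is even simpler here since $k(t)\pxi_h^{1-\sigma}$ does not depend on $x$, so $\partial_{x_j}\Lambda=\partial_{x_j}\lambda$) and by the conjugation theorem applied to $\sum_j a_j\partial_{x_j}$ and to $b$, one gets, as in \eqref{biscio}, $P_\Lambda=\partial_t-i\Delta_x+A_\Lambda+\tilde r_0$ with $\tilde r_0$ of order $(0,0)$ and
\[
A_\Lambda(t,x,D)=i\sum_j a_j D_{x_j}+b-2\sum_j(\partial_{x_j}\lambda)D_{x_j}-k'(t)\pxi_h^{1-\sigma}+r(t,x,D);
\]
the essential simplification with respect to Theorem \ref{main} is that here $\Re a_j$ and $b$ are merely \emph{bounded} -- hence $\textbf{\textrm{SG}}$--symbols of order $(0,0)$ instead of $(1,1-\sigma)$ and $(0,1-\sigma)$ -- so that the remainder $r$ is a symbol of order $\le(1-\sigma,0)$, continuous in $t$. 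Computing the symbol of $A_\Lambda+A_\Lambda^{*}$ exactly as in Section \ref{sec4}, its positive-order part is, modulo a symbol of order $(0,0)$,
\[
-2\sum_j\bigl[(\Im a_j)(t,x)+2(\partial_{x_j}\lambda)(x,\xi)\bigr]\xi_j+2\bigl(\Re b(t,x)-k'(t)\pxi_h^{1-\sigma}+\Re r(t,x,\xi)\bigr),
\]
which, by the analogue of Lemma \ref{lemmastar} and the bound $|\Re b|+|\Re r|\le N\pxi_h^{1-\sigma}$ (with $N$ independent of $k$ and of the free constant $M$ scaling $\lambda$), is dominated by $\bigl(C+N+N(|k(t)|+M)-k'(t)\bigr)\pxi_h^{1-\sigma}$. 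Choosing first $M$ large (in terms of the constant $C$ of the statement and, so as to absorb also the remainder of the conjugation with $\exp(\rho_1\langle D\rangle^{1/\theta})$ below, possibly of $\rho_1$ -- but \emph{not} of $\rho_2$), then $k(t)=e^{-Nt}k(0)-(M+1)(1-e^{-Nt})$ with $k(0)$ so large that $k\ge0$ on $[0,T]$, and finally $h\ge h_o$, this symbol becomes nonpositive; the sharp G{\aa}rding inequality then gives $\frac{d}{dt}\|v\|^2\le C_0(\|P_\Lambda v\|^2+\|v\|^2)$, and Gronwall's lemma yields the $L^2$ estimate \eqref{enL2} for $P_\Lambda$.

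To pass to the spaces $H^m_{\rho,s,\theta}(\R^n)$ one conjugates $P_\Lambda$ further with $\Pi_{m,\rho,s,\theta}$ of \eqref{pigr}, exactly as in Section \ref{sec4}: the conjugation of $-i\Delta_x$ with $\exp(\rho_1\langle D\rangle^{1/\theta})$ is trivial (Fourier multipliers commute), the remainder of the conjugation of $A_\Lambda$ with $\exp(\pm\rho_1\langle D\rangle^{1/\theta})$ is of size $\lesssim\rho_1 h^{1/\theta-1}\px^{-\sigma}|\xi|$ and, for $h$ large, is absorbed into the first bracket above (after possibly enlarging $M$ in terms of $\rho_1$), and the term $q$ of \eqref{termineq} together with the remainder of the conjugation of $A_\Lambda$ with $\exp(\pm\rho_2\langle\cdot\rangle^{1/s})$ are treated precisely as the corresponding terms in Section \ref{sec4}; none of this forces any enlargement of $M$ in terms of $\rho_2$. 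One then obtains the estimate \eqref{enHmtilde} for $\widetilde P_\Lambda$, hence the well posedness of \eqref{CP2} in $H^m_{\rho,s,\theta}(\R^n)$ and the estimate \eqref{eninfinite} for its solution $v$. Going back to \eqref{CP} through $u=(e^\Lambda)^{-1}v$ and using, in place of Proposition \ref{continforder}, its $\xi$--$x$ dual (the continuity of $(e^\Lambda)^{-1}$ from $H^m_{\rho,s,\theta}$ into $H^m_{\rho-\bar\delta e_1,s,\theta}$ for a suitable $\bar\delta=\bar\delta(\theta,\rho_1)>0$), one concludes that $u\in C([0,T];H^m_{(\rho_1-\bar\delta,\rho_2),s,\theta}(\R^n))$ and that it satisfies \eqref{enestreg}. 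I expect the only genuinely delicate point -- the sole place where the argument is not a verbatim transcription of the proof of Theorem \ref{main} -- to be the construction of $\lambda$ from \cite[Formula (2.15)]{KB} and the verification that, inside the $\textbf{\textrm{SG}}$--Gelfand-Shilov calculus of Section \ref{sec2}, it enjoys the symbol and mapping properties listed above, in particular that it has order $(1/\theta,0)$; this is exactly where the hypothesis $\theta<1/(1-\sigma)$ enters.
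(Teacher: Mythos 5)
The paper does not actually supply a proof of this theorem; it states explicitly that the proof ``is a mere repetition of the argument of the proof of Theorem~\ref{main} for a different choice of $\Lambda$'' (namely the weight of \cite[Formula (2.15)]{KB}) and omits it. Your plan follows exactly this indicated route, and you correctly single out the one non-formal step: showing that the [KB]-type phase, truncated to $\{|x|\lesssim\pxi_h^{\kappa}\}$ with $1\le\kappa\le\tfrac{1}{\theta(1-\sigma)}$, actually belongs to $\textbf{\textrm{SG}}^{(1/\theta,0)}_{\mu}(\R^{2n})$ so that the machinery of Section~2 and the $x$--$\xi$ mirror of Theorem~\ref{conjugationthm} apply. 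That verification is genuinely the crux and is not a verbatim dualization of Lemmas~\ref{lemma1}--\ref{lemma4}: the $\textbf{\textrm{SG}}$ estimates $|\partial_x^\beta\lambda|\lesssim\pxi^{1/\theta}\px^{-|\beta|}$ only become available because the cut-off restricts the support to $\px\lesssim\pxi_h^{\kappa}$ (so that $\px^{-\sigma}=\px^{1-\sigma}\px^{-1}\lesssim\pxi^{\kappa(1-\sigma)}\px^{-1}\le\pxi^{1/\theta}\px^{-1}$), and without such a cut-off the estimate fails for $\px$ large and $x$ nearly orthogonal to $\xi$. Two small imprecisions in your description: one should integrate a $t$-independent dominating function $M\langle\cdot\rangle^{-\sigma}$ rather than $-\Im a_j(t,\cdot)$ itself (as in Lemma~\ref{lemma1}, where $g_1$ is used rather than $\Im a$, so that $\lambda$ is $t$-independent and no spurious $\partial_t\lambda$ term appears), and the construction is not literally ``the roles of $x$ and $\xi$ interchanged'' --- the phase is still integrated in $x$ along bicharacteristics, only the truncation scale is moved from $\px$ to $\pxi_h^{\kappa}$. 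With these points made explicit, the proposal is consistent with what the paper claims.
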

\appendix

\section{Calculus for pseudodifferential operators of infinite order}\label{sec:appendix}
Here we develop the calculus for pseudodifferential operators with symbols in $\textbf{\textrm{SG}}^{\tau, \infty}_{\mu, \nu, s}(\R^{2n})$ and prove the results stated in Subsection 2.2. Some proofs will be just sketched or omitted since they follow readily the arguments used for other similar calculi, cf. \cite{CPP1, CPP2, CR,CT2, Pr}. For completeness and to achieve our results, we introduce two auxiliary classes of symbols which have infinite order in $\xi$, respectively in both $x$ and $\xi$.
\begin{Def}
Fixed $C >0, c >0$ and $\mu,\nu,\tau,\theta \in \R$ with $ \mu >1$ and $1 < \nu \leq \theta$, we shall denote by $\textbf{\textrm{SG}}_{\mu,\nu,\theta}^{\infty,\tau}(\R^{2n};C, c)$ the Banach space of all functions $a(x,\xi) \in C^\infty(\R^{2n})$ satisfying the following estimates:
\begin{equation}
\sup_{\alpha, \beta \in \N^n} \sup_{(x,\xi) \in \R^{2n}}
C^{-|\alpha|-|\beta|} (\alpha !)^{-\mu}(\beta!)^{-\nu} \pxi^{|\alpha|} \px^{-\tau+|\beta|}  \exp\left[-c|\xi|^{\frac{1}{\theta}} \right] \left| \der a(x,\xi) \right| < +\infty.
\end{equation}
We set $\textbf{\textrm{SG}}_{\mu,\nu, \theta}^{\infty,\tau}(\R^{2n}) = \lim\limits_{\stackrel{\longrightarrow}{C,c \to \infty}}\textbf{\textrm{SG}}_{\mu,\nu, \theta}^{\infty,\tau}(\R^{2n};C, c)$ endowed with the inductive limit topology.
\end{Def}

\begin{Def}
Fixed $C >0, c >0$ and $\mu,\nu,s, \theta \in \R$ with $1<\mu \leq s$ and $1< \nu \leq \theta$, we shall denote by $\textbf{\textrm{SG}}_{\mu,\nu,s,\theta}^{\infty}(\R^{2n};C, c)$ the Banach space of all functions $a(x,\xi) \in C^\infty(\R^{2n})$ satisfying the following estimates:
\begin{equation}
\sup_{\alpha, \beta \in \N^n} \sup_{(x,\xi) \in \R^{2n}}
C^{-|\alpha|-|\beta|} (\alpha !)^{-\mu}(\beta!)^{-\nu} \pxi^{|\alpha|} \px^{|\beta|}  \exp\left[-c(|x|^{\frac{1}{s}}+|\xi|^{\frac{1}{\theta}}) \right] \left| \der a(x,\xi) \right| < +\infty.
\end{equation}
We set $\textbf{\textrm{SG}}_{\mu,\nu, s, \theta}^{\infty}(\R^{2n}) = \lim\limits_{\stackrel{\longrightarrow}{C,c \to \infty}}\textbf{\textrm{SG}}_{\mu,\nu, s, \theta}^{\infty}(\R^{2n};C, c)$ endowed with the inductive limit topology.
\end{Def}
For simplicity, in the case $\mu=\nu$ we shall use the notation $\textbf{\textrm{SG}}_{\mu, \theta}^{\infty,\tau}(\R^{2n}) $ and $\textbf{\textrm{SG}}_{\mu, s, \theta}^{\infty}(\R^{2n}) $ for the classes $\textbf{\textrm{SG}}_{\mu,\mu, \theta}^{\infty,\tau}(\R^{2n})$ and $\textbf{\textrm{SG}}_{\mu,\mu, s, \theta}^{\infty}(\R^{2n}) $.\\
 
The following evident inclusions hold for every $m =(m_1, m_2) \in \R^2$ and for every $s >1, \theta>1$:
$$\textbf{\textrm{SG}}_{\mu,\nu}^{m}(\R^{2n}) \subset \textbf{\textrm{SG}}_{\mu,\nu, \theta}^{\infty,m_2}(\R^{2n}) \subset \textbf{\textrm{SG}}_{\mu,\nu, s, \theta}^{\infty}(\R^{2n})$$ 
and
$$\textbf{\textrm{SG}}_{\mu,\nu}^{m}(\R^{2n}) \subset \textbf{\textrm{SG}}_{\mu,\nu, s}^{m_1,\infty}(\R^{2n})\subset \textbf{\textrm{SG}}_{\mu,\nu, s, \theta}^{\infty}(\R^{2n}).$$

\begin{Prop}\label{appaelambda}
Let $\lambda \in C^\infty (\R^{2n}).$ Then the following conditions holds:\\
i) If $\lambda \in \textbf{\textrm{SG}}_{\mu}^{(0,1/s)}(\R^{2n})$, then $e^{ \lambda (x,\xi)} \in \textbf{\textrm{SG}}_{\mu, s}^{0,\infty}(\R^{2n});$\\
ii) If $\lambda \in \textbf{\textrm{SG}}_{\mu}^{(1/\theta,0)}(\R^{2n})$, then $e^{\lambda (x,\xi)} \in \textbf{\textrm{SG}}_{\mu, \theta}^{\infty,0}(\R^{2n}).$
\end{Prop}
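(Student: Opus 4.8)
The plan is to regard $e^{\lambda(x,\xi)}$ as the composition of the entire function $z\mapsto e^{z}$ with the symbol $\lambda$ and to differentiate by the multivariate Fa\`a di Bruno formula. I would carry out (i) in full and then observe that (ii) follows by the very same argument after interchanging the roles of $x$ and $\xi$ (and of the exponents $s$ and $\theta$), since both the hypothesis and the target class are symmetric under this exchange.

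First I would record the zero-th order bound. Taking $\alpha=\beta=0$ in the definition of $\textbf{\textrm{SG}}_{\mu}^{(0,1/s)}(\R^{2n})$ gives $|\lambda(x,\xi)|\le C\px^{1/s}$, and since $1/(2s)<1$ we have $\px^{1/s}=(1+|x|^{2})^{1/(2s)}\le 1+|x|^{1/s}$; hence $|e^{\lambda(x,\xi)}|\le e^{C}e^{C|x|^{1/s}}$, which already supplies the exponential weight $\exp[c|x|^{1/s}]$ required by $\textbf{\textrm{SG}}_{\mu,s}^{0,\infty}(\R^{2n})$. For the derivatives, writing $\gamma=(\alpha,\beta)\in\N^{2n}$, Fa\`a di Bruno's formula yields
\[
\der e^{\lambda}=e^{\lambda}\sum_{k=1}^{|\gamma|}\frac{1}{k!}\sum_{\substack{\gamma^{1}+\cdots+\gamma^{k}=\gamma\\ \gamma^{j}\neq 0}}\frac{\gamma!}{\gamma^{1}!\cdots\gamma^{k}!}\prod_{j=1}^{k}\partial^{\gamma^{j}}\lambda,
\]
with $\gamma^{j}=(\alpha^{j},\beta^{j})$. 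Into each factor I would insert the hypothesis $|\partial^{\gamma^{j}}\lambda|\le C^{|\gamma^{j}|+1}(\alpha^{j}!\,\beta^{j}!)^{\mu}\pxi^{-|\alpha^{j}|}\px^{1/s-|\beta^{j}|}$; since $\sum_{j}\alpha^{j}=\alpha$ and $\sum_{j}\beta^{j}=\beta$, the product over $j$ produces the decay $\pxi^{-|\alpha|}\px^{k/s-|\beta|}$.

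The remaining power $\px^{k/s}$ is converted into the exponential weight by the elementary inequality $t^{k}\le k!\,e^{t}$ (applied with $t=\px^{1/s}$), and the factor $k!$ cancels exactly the $1/k!$ in Fa\`a di Bruno's formula. It then remains to absorb the combinatorial factors: using $\mu>1$ together with $\prod_{j}\alpha^{j}!\beta^{j}!\le\gamma!$ one gets
\[
\frac{\gamma!}{\gamma^{1}!\cdots\gamma^{k}!}\prod_{j=1}^{k}(\alpha^{j}!\,\beta^{j}!)^{\mu}=\gamma!\prod_{j=1}^{k}(\alpha^{j}!\,\beta^{j}!)^{\mu-1}\le(\alpha!\,\beta!)^{\mu};
\]
moreover $\prod_{j}C^{|\gamma^{j}|+1}=C^{|\gamma|+k}\le C^{2|\gamma|}$, and the number of ordered decompositions of $\gamma$ into at most $|\gamma|$ nonzero multi-indices is bounded by $C_{1}^{|\gamma|}$ for a suitable $C_{1}=C_{1}(n)$. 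Collecting these estimates together with the zero-th order bound gives
\[
|\der e^{\lambda}(x,\xi)|\le C_{0}^{|\alpha|+|\beta|+1}(\alpha!\,\beta!)^{\mu}\,\pxi^{-|\alpha|}\,\px^{-|\beta|}\,e^{c|x|^{1/s}}
\]
for suitable $C_{0},c>0$ independent of $\alpha,\beta$, which is precisely the estimate defining $\textbf{\textrm{SG}}_{\mu,s}^{0,\infty}(\R^{2n})$.

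The step I expect to be the main obstacle is the combinatorial bookkeeping just described: one must make sure that, after turning $\px^{k/s}$ into $k!\,e^{\px^{1/s}}$, no spurious factorial growth survives and the Gevrey index $\mu$ is exactly preserved by the recombination of the multinomial coefficient with the products $(\alpha^{j}!\,\beta^{j}!)^{\mu}$ — this is where the hypothesis $\mu>1$ is used. As for (ii), the proof is word for word the same with $x$ and $\xi$ (hence $\alpha^{j}$ and $\beta^{j}$, and $s$ and $\theta$) interchanged: the bound $|\lambda(x,\xi)|\le C\pxi^{1/\theta}$ gives the exponential weight $\exp[c|\xi|^{1/\theta}]$, each $\partial^{\gamma^{j}}\lambda$ now contributes $\pxi^{1/\theta-|\alpha^{j}|}\px^{-|\beta^{j}|}$, and $\pxi^{k/\theta}$ is absorbed into $k!\,e^{\pxi^{1/\theta}}$ exactly as before, yielding $e^{\lambda}\in\textbf{\textrm{SG}}_{\mu,\theta}^{\infty,0}(\R^{2n})$.
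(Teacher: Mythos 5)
Your proof is correct and is in the same spirit as the paper's: both reduce the problem to estimating $\der e^{\lambda}$ by a Fa\`a di Bruno--type expansion and then trading the surviving power $\px^{k/s}$ for the exponential weight via a factorial inequality. The only difference is in the bookkeeping. The paper quotes an intermediate estimate from Kajitani--Nishitani, namely
\[
\left| \der e^{\lambda} \right| \le B^{|\alpha+\beta|}\pxi^{-|\alpha|}\px^{-|\beta|}e^{\lambda}\sum_{j=0}^{|\alpha+\beta|}\bigl(c_0\px^{1/s}\bigr)^{|\alpha+\beta|-j}\,j!^{\mu},
\]
and then collapses the finite sum with $\sum_{j\le N}(c_0 t)^{N-j}j!^{\mu}\le N!^{\mu}e^{c_0 t}$. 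You instead absorb $\px^{k/s}\le k!\,e^{\px^{1/s}}$ term by term, so the $k!$ cancels the $1/k!$ of the Fa\`a di Bruno sum and you are left to bound the number of ordered decompositions $\gamma=\gamma^{1}+\cdots+\gamma^{k}$ with $\gamma^{j}\ne 0$ geometrically in $|\gamma|$; this is fine (e.g. $\binom{m+k-1}{k-1}\le 2^{m+k-1}$ coordinatewise together with $k\le|\gamma|$ gives $C_1(n)^{|\gamma|}$), though you should state it explicitly rather than asserting it. Both routes hinge on the same two points you isolate: $\mu>1$ to merge $\gamma!\prod_j(\gamma^j!)^{\mu-1}\le(\alpha!\beta!)^{\mu}$ via the multinomial-coefficient inequality, and the zeroth-order bound $|\lambda|\le C\px^{1/s}$ to control $e^{\lambda}$. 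What your version buys is self-containedness — the combinatorics hidden in the cited lemma are carried out explicitly — at the cost of a slightly longer computation.
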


\begin{proof} We prove only i), the proof of ii) being similar.
Let $\lambda \in \textbf{\textrm{SG}}_{\mu}^{(0,1/s)}(\R^{2n};C)$. 
Repeating readily the argument in the proof of \cite[Lemma 6.2]{KN} we obtain that 
$$\left| \der e^{ \lambda(x,\xi)} \right| \leq B^{|\alpha+\beta|}\pxi^{-|\alpha|}\px^{-|\beta|}  e^{\lambda(x,\xi)} \sum_{j=0}^{|\alpha+\beta|}(c_o \px^{1/s})^{|\alpha+\beta|-j} j!^\mu$$
for every $\alpha, \beta \in \N^n,$ with $B=6C$ and $c_o $ is the norm of $\lambda$ in $\textbf{\textrm{SG}}_{ \mu}^{(0,1/s)}(\R^{2n};C)$.
To conclude the proof it is sufficient to observe that by standard factorial inequalities we have:
$$\sum_{j=0}^{|\alpha+\beta|}(c_o \px^{1/s})^{|\alpha+\beta|-j} j!^\mu \leq |\alpha+\beta|!^\mu e^{c_o \px^{1/s}}.$$
\end{proof}

We start by proving Proposition \ref{gencontinforder}. For this we need a preliminary result, cf. \cite{Ivrii} for the proof.

\begin{Lemma}
\label{paola}
Given $\kappa>1, \zeta >0$,
 let $$m_{\kappa, \zeta}(x)= \sum_{j=0}^\infty \frac{\zeta^j \px^{2j}}{j!^{2\kappa}}, \qquad x \in \R^n.$$
Then, for every $\epsilon >0$ there exists a constant $C=C(\kappa, \epsilon)>0$ such that
\begin {equation}
C^{-1}e^{(2\kappa-\epsilon) \zeta^{\frac{1}{2\kappa}}\px^{\frac{1}{\kappa}}} \leq  m_{\nu,\zeta}(x) \leq C e^{(2\kappa+\epsilon) \zeta^{\frac{1}{2\kappa}}\px^{\frac{1}{\kappa}}} 
\end{equation}
for every $x \in \R^n$.
\end{Lemma}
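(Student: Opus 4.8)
The plan is to collapse the $n$-dimensional statement to a one-variable estimate by a substitution that diagonalizes the exponent. Writing $u=\zeta^{1/(2\kappa)}\px^{1/\kappa}$, one has $\zeta^j\px^{2j}=u^{2\kappa j}$, hence
\[
m_{\kappa,\zeta}(x)=\sum_{j=0}^\infty\frac{u^{2\kappa j}}{(j!)^{2\kappa}}=\sum_{j=0}^\infty\Big(\frac{u^j}{j!}\Big)^{2\kappa}=:F(u),
\]
and since $(2\kappa\pm\epsilon)\zeta^{1/(2\kappa)}\px^{1/\kappa}=(2\kappa\pm\epsilon)u$, the assertion is exactly $C^{-1}e^{(2\kappa-\epsilon)u}\le F(u)\le Ce^{(2\kappa+\epsilon)u}$ for all $u\ge\zeta^{1/(2\kappa)}>0$ (recall $\px\ge1$). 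Thus it suffices to analyse the scalar function $F$.

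For the upper bound I would use only the inclusion $\ell^1\hookrightarrow\ell^{2\kappa}$ with norm $\le1$, valid since $2\kappa>1$: for a nonnegative sequence $(a_j)$ one has $\sum_j a_j^{2\kappa}\le\big(\sum_j a_j\big)^{2\kappa}$. Applying this with $a_j=u^j/j!$ gives $F(u)\le\big(\sum_j u^j/j!\big)^{2\kappa}=e^{2\kappa u}$, so the upper bound holds with $C=1$ and even with $\epsilon=0$.

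For the lower bound I would retain only the single term whose index is closest to $u$: for $u\ge1$ set $N=\lfloor u\rfloor\ge1$, so $F(u)\ge(u^N/N!)^{2\kappa}$. Stirling in the crude form $N!\le e\sqrt N\,(N/e)^N$ yields $u^N/N!\ge e^{-1}N^{-1/2}(u/N)^N e^N\ge c\,u^{-1/2}e^{u}$ for a universal $c>0$ (using $u/N\ge1$, $e^N\ge e^{u-1}$ and $N\le u$), hence $F(u)\ge c^{2\kappa}u^{-\kappa}e^{2\kappa u}$. Since $u^{-\kappa}e^{\epsilon u}\to\infty$, there is $u_0=u_0(\kappa,\epsilon)\ge1$ with $c^{2\kappa}u^{-\kappa}\ge e^{-\epsilon u}$ for $u\ge u_0$, so $F(u)\ge e^{(2\kappa-\epsilon)u}$ there. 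On the remaining compact range $\zeta^{1/(2\kappa)}\le u\le u_0$ one uses $F(u)\ge1$ (the $j=0$ term) together with $\sup_{u\le u_0}e^{(2\kappa-\epsilon)u}<\infty$ to absorb the discrepancy into the constant; taking $C$ to be the maximum of the finitely many constants produced finishes the argument.

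There is essentially no serious obstacle here: this is a classical asymptotic for Mittag-Leffler-type series (and is quoted from \cite{Ivrii}). The only points requiring a little care are the reformulation in the first paragraph, which is precisely what makes both inequalities transparent, and the uniformity of the lower bound all the way down to $u=\zeta^{1/(2\kappa)}$, handled by splitting off a compact $u$-interval as above. If sharp constants were wanted one would instead estimate $F$ by a Laplace/saddle-point analysis near $j\approx u$, but that refinement is unnecessary for the stated $\epsilon$-version.
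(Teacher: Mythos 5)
Your argument is correct, and it is worth noting that the paper itself gives no proof of this lemma at all: it is stated with the remark ``cf. \cite{Ivrii} for the proof''. So you have supplied a self-contained elementary replacement for a cited fact. The key moves are all sound: the substitution $u=\zeta^{1/(2\kappa)}\px^{1/\kappa}$ reduces the claim to a scalar Mittag--Leffler-type estimate for $F(u)=\sum_{j}\bigl(u^j/j!\bigr)^{2\kappa}$; the upper bound via $\ell^1\hookrightarrow\ell^{2\kappa}$ (valid since $2\kappa>1$) gives the sharp $F(u)\le e^{2\kappa u}$, so you in fact prove more than is asked on that side; and retaining the single term $j=\lfloor u\rfloor$ together with the Stirling bound $N!\le e\sqrt N\,(N/e)^N$ (which holds for all $N\ge 1$, with equality at $N=1$) yields $F(u)\ge c^{2\kappa}u^{-\kappa}e^{2\kappa u}$, after which absorbing the polynomial factor into $e^{-\epsilon u}$ for $u\ge u_0$ and using $F\ge 1$ on the compact remainder $[\zeta^{1/(2\kappa)},u_0]$ completes the lower bound. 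One small presentational point: since $\px\ge 1$ but $\zeta^{1/(2\kappa)}$ may be less than $1$, your lower-bound argument with $N=\lfloor u\rfloor\ge 1$ only applies for $u\ge1$; you do cover the complementary range $\zeta^{1/(2\kappa)}\le u<1$ by the compact-interval step, but it would read more cleanly to say explicitly that the compact range is $[\zeta^{1/(2\kappa)},\max(1,u_0)]$. Beyond that, nothing is missing.
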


\textit{Proof of Proposition \ref{gencontinforder}.} Observe that
$$
\frac{1}{m_{\kappa , \zeta}(x)} \sum_{j=0}^{\infty} \frac{\zeta ^j}
{(j!)^{2\kappa }}(1-\Delta_{\xi})^j
e^{i\langle x,\xi \rangle} = e^{i\langle x,\xi \rangle}.
$$
Let now $\kappa=s$ and let $1<\mu < s$ and $1<\nu \leq \theta$. 
Let moreover $\Omega$ be a bounded subset of $\mathscr{S}_{s}^\theta(\R^n).$ This implies that there exists $r_0>0$ such that for every $h >0$ and for every $f \in \Omega$:
$$  \sup_{\alpha \in \N^n}\sup_{\xi \in \R^n } h^{-|\alpha|}(\alpha!)^{-s} e^{r_0|\xi|^{1/\theta}}|D_\xi^\beta \hat{f}(\xi)| <\infty$$
For fixed $\alpha, \beta \in \N^n$ and for $f \in \Omega$ we have
\begin{eqnarray*}
 x^{\alpha}D_x^{\beta} (a(x,D)f)(x) 
&=& x^{\alpha} \sum_{\gamma \leq \beta}\binom{\beta}{\gamma} \int_{\R^n}e^{i \langle x,\xi \rangle}
 \xi^{\gamma} D_x^{\beta-\gamma}a(x,\xi)
 \widehat{f}(\xi )  \,  \dslash \xi 
\\
&=& \frac{x^{\alpha}}{m_{s ,\zeta}(x)} \sum_{\gamma \leq \beta}
\binom{\beta}{\gamma} 
g_{\zeta ,\beta ,\gamma}(x),
\end{eqnarray*}
with
$$
g_{\zeta,\beta ,\gamma}(x)
=
\sum_{j=0}^{\infty} \frac{\zeta ^j}{(j!)^{2s}}
\int_{\R^n}  e^{i \langle x,\xi \rangle} (1-\Delta_{\xi} )^j
\left (  \xi^{\gamma} D_x^{\beta-\gamma}a(x,\xi)
\widehat{f}(\xi ) \right )  
\, \dslash \xi.
$$

\par

Since $\mu <s$ and $\nu \leq \theta$ then, by standard factorial inequalities, there exist $A>0, c>0$ and for every $h>0$ there exists $C_h >0$ such that 
$$
|(1-\Delta_{\xi} )^j (\xi^{\gamma} D_x^{\beta-\gamma}a(x,\xi)
\widehat{f}(\xi ))|
\leq
C_h A^{|\beta|} h^{j}j!^{2s}\gamma !^\theta
(\beta-\gamma) !^\theta e^{c|x|^{\frac 1s}-r_0|\xi |^{\frac 1\theta}}.
$$
 Hence, we get
$$
|g_{\zeta ,\beta ,\gamma}(x)|
\leq C_h
\sum_{j=0}^{\infty}
(h\zeta^{\frac{1}{2s}})^j
A^{|\beta |}
(\beta )!^\theta e^{c|x|^{\frac 1s}}
\int_{\R^n} e^{-r_0|\xi |^{\frac 1\sigma}}\, d\xi
$$
Moreover for every $\varepsilon \in (0,r_0)$ we have
$$e^{-\varepsilon|x|^{1/s}}|x^\alpha| \leq A_\varepsilon^{|\alpha|+1} (\alpha!)^s.$$ 
Now, taking $\zeta$ such that $(2s-\epsilon)\zeta^{\frac{1}{2s}}-c>0$ and choosing $h$ so small that $h\zeta <1$, we obtain that the set
$$\{a(\cdot,D) f : f \in \Omega\}$$ is bounded in $\mathscr{S}_s^\theta(\R^n).$ This proves the continuity of $a(x,D)$ on $\mathscr{S}_s^\theta(\R^n).$
The continuity on
$(\mathscr{S} _{s}^\sigma )'(\R^d)$ now follows from the preceding
continuity and duality.
\qed
\\

A similar continuity result can be proved for operators with symbols in $\textrm{\textbf{SG}}^{\infty, \tau}_{\mu, \nu,\theta}(\R^{2n})$. We omit the proof since it can be obtained using the same type of argument as in the previous proof.

\begin{Prop}
Let $\mu, \nu, \tau, \theta \in \R$ with $1< \nu < \theta, \nu >1$ and let $a \in \textbf{\textrm{SG}}_{\mu,\nu,\theta}^{\infty,\tau}(\R^{2n})$. Then $a(x,D)$ is linear and continuous on  $\widetilde{\mathscr{S}}_{s}^\theta(\R^n)$ and it extends to a continuous map on $(\widetilde{\mathscr{S}}_{s}^\theta)'(\R^n)$ for every $s \geq \mu$.
\end{Prop}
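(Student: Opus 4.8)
The argument is the mirror image of the one for Proposition \ref{gencontinforder}, with the roles of the $x$-- and $\xi$--variables interchanged. First one may assume $\tau=0$: writing $a(x,\xi)=\px^\tau b(x,\xi)$, one checks by the Leibniz rule and \eqref{KB1} that $b=\px^{-\tau}a\in\textbf{\textrm{SG}}_{\mu,\nu,\theta}^{\infty,0}(\R^{2n})$, that $a(x,D)=\px^{\tau}\circ b(x,D)$, and that multiplication by $\px^\tau$ maps $\widetilde{\mathscr{S}}_s^\theta(\R^n)$ continuously into itself; so it suffices to treat $b(x,D)$. Next, if $f\in\widetilde{\mathscr{S}}_s^\theta(\R^n)$ then $\widehat f\in\mathscr{S}_\theta^s(\R^n)$ by the Fourier isomorphism recalled above, so $\widehat f$ is Gevrey of order $s$ with a \emph{fixed} constant and decays like $e^{-\epsilon|\xi|^{1/\theta}}$ for \emph{every} $\epsilon>0$; since $b(x,\xi)$ grows at most like $e^{c|\xi|^{1/\theta}}$ in $\xi$, for $\epsilon>c$ the function $\xi\mapsto b(x,\xi)\widehat f(\xi)$ is rapidly decreasing together with all its $\xi$--derivatives, so $b(x,D)f$ is the absolutely convergent integral \eqref{pseudo}, smooth in $x$, and no $m_{\kappa,\zeta}$--type regularization as in Lemma \ref{paola} is needed. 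Fixing a bounded $\Omega\subset\widetilde{\mathscr{S}}_s^\theta(\R^n)$, the set $\{\widehat f:f\in\Omega\}$ is bounded in $\mathscr{S}_\theta^s(\R^n)$, hence lies in a single step $\mathcal{S}_{\theta,B}^{s,A_0}(\R^n)$: there are $A_0>0$ and, for each $\epsilon>0$, a constant $C_\epsilon>0$ with $|\partial_\xi^{\mathbf q}\widehat f(\xi)|\le C_\epsilon A_0^{|\mathbf q|}(\mathbf q!)^s e^{-\epsilon|\xi|^{1/\theta}}$ for all $\mathbf q$, $\xi$ and all $f\in\Omega$.

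The heart of the proof is the estimate, for arbitrary multi-indices $\mathbf p,\mathbf d$, of $x^{\mathbf p}\partial_x^{\mathbf d}(b(x,D)f)(x)$. Starting from $\partial_x^{\mathbf d}(b(x,D)f)(x)=\int e^{i\langle x,\xi\rangle}\sum_{\gamma\le\mathbf d}\binom{\mathbf d}{\gamma}(i\xi)^\gamma\partial_x^{\mathbf d-\gamma}b(x,\xi)\widehat f(\xi)\,\dslash\xi$, using $x^{\mathbf p}e^{i\langle x,\xi\rangle}=D_\xi^{\mathbf p}e^{i\langle x,\xi\rangle}$ and integrating by parts in $\xi$ (legitimate, no boundary contribution, by the rapid decay of $\widehat f$ just noted), one gets, up to a sign, $x^{\mathbf p}\partial_x^{\mathbf d}(b(x,D)f)(x)=\pm\int e^{i\langle x,\xi\rangle}D_\xi^{\mathbf p}\big[\sum_{\gamma\le\mathbf d}\binom{\mathbf d}{\gamma}(i\xi)^\gamma\partial_x^{\mathbf d-\gamma}b(x,\xi)\widehat f(\xi)\big]\,\dslash\xi$, an integral whose only residual $x$--dependence is the phase (here $\tau=0$ is used). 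Expanding $D_\xi^{\mathbf p}$ by Leibniz over the three factors $(i\xi)^\gamma$, $\partial_x^{\mathbf d-\gamma}b$, $\widehat f$, inserting the symbol bound $|\partial_\xi^{\mathbf p_2}\partial_x^{\mathbf d-\gamma}b(x,\xi)|\le C^{|\mathbf p_2|+|\mathbf d-\gamma|+1}(\mathbf p_2!)^\mu((\mathbf d-\gamma)!)^\nu\pxi^{-|\mathbf p_2|}\px^{-|\mathbf d-\gamma|}e^{c|\xi|^{1/\theta}}$ and the bound on $\partial_\xi^{\mathbf p_3}\widehat f$, one sorts the resulting factors: the factors $\mathbf p_1!$ from $\partial_\xi^{\mathbf p_1}(i\xi)^\gamma$, $(\mathbf p_2!)^\mu$ from the symbol and $(\mathbf p_3!)^s$ from $\widehat f$ combine, using $\mu\le s$ and $\mathbf p_1!\,\mathbf p_2!\,\mathbf p_3!\le\mathbf p!$, to at most $B^{|\mathbf p|}(\mathbf p!)^s$ with $B$ depending only on $C$ and $A_0$ (so independent of $\mathbf p$ and $\mathbf d$); the $\xi$--integral of the residual power is estimated, by elementary computations, by $\int_{\R^n}|\xi|^{|\gamma|}e^{(c-\epsilon)|\xi|^{1/\theta}}\,d\xi\le K_{\epsilon,n}(|\gamma|!)^\theta\big(C_0/(\epsilon-c)\big)^{\theta|\gamma|}$; and the remaining factors are $\binom{\mathbf d}{\gamma}$, $C^{|\mathbf d-\gamma|}$ and $((\mathbf d-\gamma)!)^\nu$. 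The decisive move is to choose $\epsilon$ (the decay rate of $\widehat f$ that one uses) large, depending on the target Gevrey constant $A$: in the sum over $\gamma\le\mathbf d$, the terms with small $|\gamma|$ are controlled, for every $A>0$, by the factorial slack $(\mathbf d!)^{\nu-\theta}$ absorbing the fixed constant $C^{|\mathbf d-\gamma|}$ --- this is the one point where the strict inequality $\nu<\theta$ is needed --- while the terms with large $|\gamma|$ are controlled by the arbitrarily small geometric factor $\big(C_0/(\epsilon-c)\big)^{\theta|\gamma|}$. The outcome is $|x^{\mathbf p}\partial_x^{\mathbf d}(b(x,D)f)(x)|\le C_A\,A^{|\mathbf d|}(\mathbf d!)^\theta\,B^{|\mathbf p|}(\mathbf p!)^s$ for every $A>0$, with $B$ and $C_A$ uniform over $f\in\Omega$.

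This inequality says precisely that $\{b(x,D)f:f\in\Omega\}$ is bounded in $\widetilde{\mathscr{S}}_s^\theta(\R^n)=\bigcap_{A>0}\bigcup_{B>0}\mathcal{S}_{s,B}^{\theta,A}(\R^n)$; hence $b(x,D)$, and therefore $a(x,D)=\px^\tau\circ b(x,D)$, is continuous on $\widetilde{\mathscr{S}}_s^\theta(\R^n)$. The continuity of the extension to $(\widetilde{\mathscr{S}}_s^\theta)'(\R^n)$ then follows by transposition, exactly as in Proposition \ref{gencontinforder}: the transpose ${}^t a(x,D)$ is an operator whose amplitude, after the reduction of amplitudes to symbols carried out in this appendix, is again given by a symbol in $\textbf{\textrm{SG}}_{\mu,\nu,\theta}^{\infty,\tau}(\R^{2n})$, hence maps $\widetilde{\mathscr{S}}_s^\theta(\R^n)$ continuously into itself by what has just been proved, and $a(x,D)$ is its transpose.

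The step I expect to be the main obstacle is obtaining the estimate for \emph{every} Gevrey constant $A>0$, i.e. balancing inside the $\gamma$--sum the two mechanisms above: the strict gap $\nu<\theta$, whose factorial slack kills the fixed constants $C^{|\mathbf d-\gamma|}$ when few derivatives fall on the phase monomial $(i\xi)^\gamma$, and the freedom to make the decay exponent $\epsilon$ of $\widehat f$ as large as desired, which produces the small geometric factors when many do. This is the exact dual of the situation in Proposition \ref{gencontinforder}, where $\widehat f$ carried instead a free Gevrey constant (the ``$h$ small'' freedom) and a fixed decay rate $r_0$, and the infinite order of the symbol --- there in $x$ --- had to be tamed by the kernel $m_{\kappa,\zeta}$; here $\widehat f$ carries a fixed Gevrey constant but an arbitrarily fast decay, the infinite order --- now in $\xi$ --- is absorbed directly by that decay, and no $m_{\kappa,\zeta}$ enters.
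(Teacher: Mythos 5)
Your proposal is correct, and it is a faithful mirror image of the proof of Proposition \ref{gencontinforder} that the paper invites (the paper omits the proof, declaring it ``the same type of argument''). You correctly identify the main structural difference in the dual situation: the Gelfand--Shilov target $\widetilde{\mathscr{S}}_s^\theta$ requires only a \emph{fixed} decay rate in $x$ together with an \emph{arbitrary} Gevrey constant, and, correspondingly, $\widehat f\in\mathscr{S}_\theta^s$ carries a fixed Gevrey constant and an arbitrarily fast exponential decay $e^{-\epsilon|\xi|^{1/\theta}}$. Since the symbol's infinite order is now in $\xi$ (growth $e^{c|\xi|^{1/\theta}}$), this decay absorbs it directly, so the $m_{\kappa,\zeta}$ regularization of Lemma \ref{paola} is indeed superfluous here --- a genuine (if minor) simplification relative to a naive word-for-word transposition of the previous proof. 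Your balancing in the $\gamma$-sum is sound: writing $\binom{\mathbf d}{\gamma}((\mathbf d-\gamma)!)^\nu(\gamma!)^\theta=(\mathbf d!)^\theta\bigl(\tfrac{(\mathbf d-\gamma)!\,\gamma!}{\mathbf d!}\bigr)^{\theta-1}((\mathbf d-\gamma)!)^{\nu-\theta}\le(\mathbf d!)^\theta((\mathbf d-\gamma)!)^{\nu-\theta}$, the factorial slack $((\mathbf d-\gamma)!)^{\nu-\theta}$ (needing precisely $\nu<\theta$) absorbs $C^{|\mathbf d-\gamma|}$, while taking $\epsilon$ large makes the geometric factor in $|\gamma|$ as small as desired, producing the estimate for every $A>0$ with a $B$ depending only on the fixed constants $C$, $A_0$. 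The reduction $a=\px^\tau b$, the passage $x^{\mathbf p}\mapsto D_\xi^{\mathbf p}$ with subsequent integration by parts, and the combination of the $\mathbf p$-factorials using $\mu\le s$ are all in order.

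One small caveat. In the final duality step you invoke the reduction of the transpose's amplitude to a symbol in the same class. That reduction (the analogue of Theorem \ref{transpose} in the $\textbf{\textrm{SG}}_{\mu,\nu,\theta}^{\infty,\tau}$ calculus) formally requires $\theta>\mu+\nu-1$, which is stronger than the stated hypothesis $\nu<\theta$. This is, however, exactly the level of detail at which the paper itself treats duality in the proof of Proposition \ref{gencontinforder} (it simply appeals to ``duality''), and the same effect can be obtained more elementarily by running your estimate directly on the amplitude operator ${}^ta(x,D)$, using the arbitrariness of the Gevrey constant of $u\in\widetilde{\mathscr{S}}_s^\theta$ in the $y$-integration-by-parts to tame $e^{c|\xi|^{1/\theta}}$. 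So this is a presentational rather than substantive gap.
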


We now define asymptotic expansions for symbols in $\textrm{\textbf{SG}}^{\tau, \infty}_{\mu,\nu,s}(\R^{2n})$. For $r> 0$ denote
$$Q_r = \{(x,\xi) \in \R^{n}: \pxi <r \quad \textit{and} \quad \px < r \}$$
and 
$$Q_r^e = \R^{2n} \setminus Q_r.$$

\begin{Def}
\label{sum}
Let $B,C,c>0.$ We shall denote by $\mathcal{FS}^{\tau,\infty}_{\mu,\nu,s}(\R^{2n}; B,C,c)$ the space of all formal sums $\sum\limits_{j \geq 0}a_j(x,\xi)$ such that $a_j(x,\xi) \in C^{\infty}(\R^{2n})$ for all $j \geq 0$ and
$$\sup_{j \geq 0} \sup_{\alpha, \beta \in \N^n} \sup_{(x,\xi) \in Q_{Bj^{\mu+\nu-1}}^e} C^{-|\alpha|-|\beta|-2j} (\alpha!)^{-\mu}(\beta!)^{-\nu} (j!)^{-\mu-\nu +1} \pxi^{-\tau+|\alpha|+ j} \px^{|\beta |+j} \cdot $$
\begin{equation}
\label{dev}
\cdot \exp \left[-c |x|^{\frac{1}{s}}\right] \left|\der a_j(x,\xi) \right| < +\infty.
\end{equation}
\end{Def}
Consider now the space $FS^{\tau,\infty}_{\mu,\nu, s}(\R^{2n}; B,C,c)$ obtained from $\mathcal{FS}^{\tau,\infty}_{ \mu,\nu, s}(\R^{2n}; B,C,c)$ by quotienting by the subspace $$E = \left \{ \sum\limits_{j \geq 0}a_j(x,\xi)  \in \mathcal{FS}^{\tau,\infty}_{\mu,\nu, s}(\R^{2n}; B,C): \textit{supp}(a_j) \subset Q_{Bj^{\mu+\nu-1}} \quad \forall j \geq 0 \right\}.$$
By abuse, we shall denote the elements of $FS^{\tau,\infty}_{\mu, \nu, s}(\R^{2n}; B,C,c)$ by formal sums of the form $\sum\limits_{j \geq 0}a_j(x,\xi).$ The arguments in the following are independent of the choice of representative.
We observe that $FS^{\tau,\infty}_{\mu,\nu, s}( \R^{2n}; B,C,c)$ is a Banach space endowed with the norm given by  the left-hand side of \eqref{dev}. We set
$$FS^{\tau,\infty}_{\mu,\nu, s}(\R^{2n}) = \lim_{\stackrel{\longrightarrow}{B,C,c \rightarrow +\infty}} FS^{\tau,\infty}_{\mu,\nu, s}( \R^{2n}; B,C,c).$$
Every symbol $a  \hskip-1pt \in \hskip-2pt \gam$ can be identified with an element $\sum\limits_{j \geq 0}a_j$ of $FS^{\tau,\infty}_{\mu, \nu, s}(\R^{2n}) \hskip-1pt,$ by setting $a_0 =a$ and $a_j=0$ for all $j \geq 1.$

\begin{Def}
\label{equiv}
We say that two sums $\sum\limits_{j \geq 0}a_j, \sum\limits_{j \geq 0} a^{\prime}_j$ from $FS^{\tau,\infty}_{\mu,\nu, s}(\R^{2n})$ are equivalent if there exist constants $B,C,c >0$ such that 
\begin{equation}\label{eqsums} \sup_{N \in \mathbb{Z}_+} \sup_{\alpha, \beta \in \N^n} \sup_{(x,\xi) \in Q^e_{BN^{\mu+\nu-1}}} \hskip-3pt C^{-|\alpha|-|\beta|-2N} (\alpha !)^{-\mu}(\beta !)^{-\nu} (N!)^{-\mu-\nu +1} \pxi^{-\tau+|\alpha|+N} \px^{|\beta|+N} \cdot $$ $$\cdot \exp \left[ -c (|x|^{\frac{1}{s}} \right] \left| \der \sum_{j<N} (a_j -a_j^{\prime} ) \right| < +\infty. 
\end{equation} In this case we write
$\sum\limits_{j \geq 0}a_j \sim \sum\limits_{j \geq 0}a^{\prime}_j $.
\end{Def}

In a similar way, by simply exchanging the roles of $x$ and $\xi$ in \eqref{dev},\eqref{eqsums}, we can define the space $FS^{\infty, \tau}_{\mu,\nu,\theta}(\R^{2n})$ encoding the asymptotic expansions of symbols from $\Gamma^{\infty, \tau}_{\mu,\nu,\theta}(\R^{2n})$. Moreover, we obtain a corresponding definition for the space $FS^m_{\mu,\nu}(\R^{2n})$ of formal sums of symbols of finite order and the related notion of equivalence for every $m=(m_1,m_2) \in \R^2$ by simply replacing $\tau$ by $m_1$ and $e^{-c|x|^{\frac1{s}}}$ by $ \px^{-m_2}$  in \eqref{dev}, \eqref{eqsums}, cf. \cite{CR} where the complete calculus for this class is developed. An analogous argument allows to define the class $FS^\infty_{\mu,\nu,s, \theta}(\R^{2n})$ and the notion of asymptotic expansions for symbols from $\textrm{\textbf{SG}}^{\infty}_{\mu,\nu,s, \theta}(\R^{2n})$. We omit the details for the sake of brevity.

\begin{Prop}
\label{sviluppo}
Given a sum $\sum\limits_{j\geq 0}a_j \in FS^{\tau,\infty}_{\mu, \nu, s}(\R^{2n}), $ (resp. $ \sum\limits_{j\geq 0}a_j \in FS^{m}_{ \mu,\nu}(\R^{2n})$), we can find a symbol $a \in \textbf{\textrm{SG}}^{\tau,\infty}_{\mu,\nu, s}(\R^{2n})$ (resp. $a \in  \textbf{\textrm{SG}}^{m}_{\mu,\nu}(\R^{2n})$)  such that
$$a \sim \sum\limits_{j\geq 0}a_j \quad \textit{in} \quad FS^{\tau,\infty}_{\mu,\nu, s}(\R^{2n}) \qquad (\textrm{resp. in} \quad FS^{m}_{\mu,\nu}(\R^{2n})).$$
\end{Prop}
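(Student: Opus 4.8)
The plan is to realize $a$ by a Borel-type resummation in which the excision is performed on exactly the scale $R_j:=B'j^{\mu+\nu-1}$ appearing in the definition of the formal classes, $B'$ being a large constant to be fixed at the end. First I would fix a single function $\phi\in C^\infty(\R)$, Gevrey regular of order $\kappa:=\min\{\mu,\nu\}$, with $0\le\phi\le1$, $\phi(r)=0$ for $r\le1$, $\phi(r)=1$ for $r\ge2$ and $|\phi^{(k)}(r)|\le A_0^{k+1}k!^\kappa$, and set, for $j\ge1$,
$$\psi_j(x,\xi):=1-\big(1-\phi(\pxi/R_j)\big)\big(1-\phi(\px/R_j)\big),\qquad \psi_0:=1 .$$
Then $\psi_j\equiv0$ on $Q_{R_j}$ and $\psi_j\equiv1$ on $Q^e_{2R_j}$, and since $\psi_j$ is, up to a constant, a product of a function of $\pxi$ and a function of $\px$, Fa\`a di Bruno together with \eqref{KB1} yields a constant $C_0$ independent of $j$ with
$$|\der\psi_j(x,\xi)|\le C_0^{|\alpha+\beta|+1}\alpha!^\mu\beta!^\nu R_j^{-|\alpha+\beta|} ;$$
moreover any nontrivial $\xi$- (resp. $x$-) derivative of $\psi_j$ is supported where $R_j<\pxi<2R_j$ (resp. $R_j<\px<2R_j$), so on that support $R_j^{-1}\le2\pxi^{-1}$ (resp. $R_j^{-1}\le2\px^{-1}$).

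Next I would set $a:=\sum_{j\ge0}\psi_j a_j$; since on every $Q_r$ all but finitely many $\psi_j$ vanish, the series is locally finite and $a\in C^\infty(\R^{2n})$. Choosing $B'\ge B$ ensures $\supp\psi_j\subset Q^e_{R_j}\subset Q^e_{Bj^{\mu+\nu-1}}$, so that \eqref{dev} applies to $a_j$ on $\supp\psi_j$. Expanding $\der(\psi_j a_j)$ by Leibniz, the powers of $R_j^{-1}$ produced by the derivatives falling on $\psi_j$ are converted, on their (thin shell) support, into the corresponding powers of $\pxi^{-1}$ or $\px^{-1}$ by the last remark of Step~1; using in addition that $\pxi,\px\ge1$ and that $\max\{\pxi,\px\}\ge R_j$ on $\supp\psi_j$, one obtains in every case
$$|\der(\psi_j a_j)(x,\xi)|\le \widetilde C^{|\alpha+\beta|+1}\alpha!^\mu\beta!^\nu\,C^{2j}R_j^{-j}\,j!^{\mu+\nu-1}\,\pxi^{\tau-|\alpha|}\px^{-|\beta|}e^{c|x|^{1/s}} .$$
Since $j!\le j^j$ we have $R_j^{-j}j!^{\mu+\nu-1}=(B')^{-j}\big(j!/j^{j}\big)^{\mu+\nu-1}\le(B')^{-j}$, so the series in $j$ is dominated by $\sum_{j}(C^2/B')^j$, which converges as soon as $B'>C^2$. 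This gives the estimate \eqref{doubleinfinite} for $a$, i.e. $a\in\gam$.

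It remains to check $a\sim\sum_j a_j$. Fix $N\ge1$; on $Q^e_{BN^{\mu+\nu-1}}$ write
$$\der\Big(a-\sum_{j<N}a_j\Big)=\sum_{j<N}\der\big((\psi_j-1)a_j\big)+\sum_{j\ge N}\der(\psi_j a_j) .$$
For $j<N$ the function $\psi_j-1$ is supported in $Q_{2R_j}\subset Q_{2B'(N-1)^{\mu+\nu-1}}$, hence in $Q_{BN^{\mu+\nu-1}}$ provided $B\ge2B'$; thus the first sum vanishes identically on $Q^e_{BN^{\mu+\nu-1}}$. The tail is estimated exactly as above, the series $\sum_{j\ge N}(C^2/B')^j$ being bounded by $2(C^2/B')^N$, and extracting a factor $\pxi^{-N}\px^{-N}$ — legitimate because $\max\{\pxi,\px\}\ge R_j\ge R_N$ on $\supp\psi_j$ for $j\ge N$ — produces precisely the inequality \eqref{eqsums} for suitable $B,C,c$. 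Hence $a\sim\sum_j a_j$ in $FS^{\tau,\infty}_{\mu,\nu,s}(\R^{2n})$. The statement for $FS^{m}_{\mu,\nu}(\R^{2n})$ follows by the identical construction, replacing $\tau$ by $m_1$ and $e^{-c|x|^{1/s}}$ by $\px^{-m_2}$ throughout (cf. \cite{CR}).

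The main obstacle is the bookkeeping in the second and third steps: one has to organize the Leibniz expansion so that each derivative hitting $\psi_j$ is absorbed either as genuine decay in $\pxi$ or $\px$ (on the shell $R_j<\pxi<2R_j$, resp. $R_j<\px<2R_j$, where it lives) or into the geometric gain $R_j^{-j}$, and then to fix $B'$ large with respect to both $B$ and $C^2$ so that all the resulting series in $j$ (resp. in $j\ge N$) converge with exactly the $j$- and $N$-dependence required by \eqref{dev} and \eqref{eqsums}. Once this is set up, the remaining estimates are routine.
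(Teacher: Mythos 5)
Your construction is essentially the paper's: the paper also defines $a=\sum_j\varphi_j a_j$ with $\varphi_j$ a rescaled Gevrey cutoff supported in $Q^e_{Rj^{\mu+\nu-1}}$ and $\equiv 1$ on $Q^e_{2Rj^{\mu+\nu-1}}$, and the choice $R\gtrsim B,C^2$ is exactly what controls the $j$-series. Your only structural variation is to realize the cutoff as the complement of a tensor product of one-variable cutoffs (and to take $\psi_0\equiv 1$, which is harmless since the $j=0$ estimate in \eqref{dev} holds on all of $\R^{2n}$); this buys nothing over the paper's single two-variable $\varphi$ but is certainly legitimate.

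There is, however, a genuine flaw in your verification of $a\sim\sum_j a_j$. In Step~2 you bound $|\der(\psi_j a_j)|$ by first keeping $\pxi^{\tau-|\alpha|-j}\px^{-|\beta|-j}$ and then using $\pxi^{-j}\px^{-j}\le R_j^{-j}$ on $\supp\psi_j$, so your displayed estimate has only $\pxi^{\tau-|\alpha|}\px^{-|\beta|}$ as the remaining weight. In Step~3 you reuse this estimate on the tail $\sum_{j\ge N}$ and then claim to ``extract a factor $\pxi^{-N}\px^{-N}$''. But once the powers $\pxi^{-j}\px^{-j}$ have been traded for $R_j^{-j}$ there is no longer any $\pxi^{-N}\px^{-N}$ to pull out; and the justification you give, namely $\max\{\pxi,\px\}\ge R_N$, runs in the wrong direction (it bounds $\pxi^{-N}\px^{-N}$ from above by $R_N^{-N}$, it does not let you insert such a factor). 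The correct bookkeeping — and what the paper in fact does via the quantities $H_{jN\alpha\beta}$ — is to factor $\pxi^{\tau-|\alpha|-N}\px^{-|\beta|-N}$ out \emph{first}, retain $\pxi^{N-j}\px^{N-j}$ inside the sum, and only then use $\pxi^{N-j}\px^{N-j}\le R_j^{-(j-N)}$ on $\supp\psi_j$ for $j\ge N$. The key combinatorial fact that closes the estimate is
$(N+k)!^{\mu+\nu-1}R_{N+k}^{-k}\le N!^{\mu+\nu-1}(B')^{-k}$
(using $(N+k)!\le N!(N+k)^k$ and $R_{N+k}=B'(N+k)^{\mu+\nu-1}$), after which the series in $k=j-N$ is geometric and the required $C^{2N}N!^{\mu+\nu-1}$ appears. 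You flagged the bookkeeping as the main obstacle; this is precisely where it has to be done carefully, and your sketch as written does not survive it.
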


\begin{proof}
Let $\varphi \in C^{\infty}(\R^{2n}), 0 \leq \varphi \leq 1$ such that $\varphi(x,\xi)=0$ if $(x, \xi) \in Q_1, \varphi(x, \xi)=1$ if $(x, \xi) \in Q^e_2$ and
\begin{equation}
\label{magg}
\left| D_x^{\delta} D_{\xi}^{\gamma} \varphi(x, \xi) \right| \leq C^{|\gamma| + |\delta|+1} (\gamma !)^\mu(\delta !)^{\nu} \quad \forall (x, \xi) \in \R^{2n}.
\end{equation}
We define: 
$$\varphi_0 (x,\xi) = \varphi \left(\frac{2}{R}x, \frac{2}{R} \xi \right)$$ and
$$\varphi_j (x, \xi) = \varphi \left( \frac{1}{Rj^{\mu+\nu -1}}x, \frac{1}{Rj^{\mu+\nu -1}} \xi \right), \quad j \geq 1.$$
We want to prove that if $R$ is sufficiently large,
\begin{equation}
\label{bocci}
a(x, \xi) = \sum_{j \geq 0} \varphi_j (x, \xi) a_j (x, \xi)
\end{equation}
is well defined as an element of $\textbf{\textrm{SG}}^{\tau,\infty}_{\mu,\nu, s}(\R^{2n})$ and $a \sim  \sum\limits_{j \geq 0} a_j$ in
$ FS_{\mu, \nu, s}^{\tau, \infty} (\R^{2n}).$

\noindent
We define $$\der a(x, \xi) = \sum_{j \geq 0} \sum_{\stackrel{\gamma \leq \alpha}{\delta \leq \beta}} \binom{\alpha}{\gamma} \binom{\beta}{\delta}
D_x^{\beta - \delta} D_{\xi}^{\alpha - \gamma} a_j (x,\xi) \cdot D_x^{\delta}D_{\xi}^{\gamma}
\varphi_j (x,\xi).$$
Choosing $R \geq B$ where $B$ is the constant in Definition \ref{sum}, we can apply the estimates (\ref{dev}) and obtain
$$\left| \der a(x,\xi) \right| \leq
C^{|\alpha| + |\beta| +1} \alpha ! \beta ! \langle x \rangle ^{- |\beta|} \langle \xi \rangle ^{\tau-|\alpha|} e^{c|x|^{\frac{1}{s}}} \sum_{j \geq 0} H_{j \alpha \beta} (x, \xi)$$
where  $$H_{j \alpha \beta} (x,\xi) = \sum_{\stackrel{\gamma \leq \alpha}{\delta \leq \beta}}
\frac{(\alpha - \gamma) !^{\mu-1}(\beta - \delta) !^{\nu -1}}{\gamma ! \delta !} \cdot C^{2j - |\gamma| -|\delta|}(j!)^{\mu+\nu -1} \langle x \rangle ^{|\delta| -j}
\langle \xi \rangle ^{|\gamma| -j} \left| D_x^{\delta}D_{\xi}^{\gamma} \varphi_j (x,\xi) \right|.$$
Now the condition (\ref{magg}) and the fact that $D_x^{\delta}D_{\xi}^{\gamma} \varphi_j (x,\xi)=0 $ in $Q^e_{2Rj^{2\nu-1}}$ for $(\delta, \gamma) \neq (0,0)$ imply that
$$H_{j \alpha \beta} (x,\xi) \leq C_1^{|\alpha| + |\beta| +1} 
(\alpha !)^{\mu-1}(\beta !)^{\nu -1} \left( \frac{C_2}{R} \right)^j$$ where $C_2$ is independent of $R.$
Enlarging $R,$ we obtain that
$$\sum_{j \geq 0} H_{j \alpha \beta} (x,\xi) \leq C_3^{ |\alpha| + |\beta| +1}
(\alpha !)^{\mu-1}(\beta !)^{\nu -1} \quad \forall (x, \xi) \in \R^{2n}$$
from which we deduce that $a \in \textbf{\textrm{SG}}^{\tau, \infty}_{\mu,\nu,s}(\R^{2n}).$\\
It remains to prove that $a \sim \sum\limits_{j \geq 0} a_j.$
Let us fix $N \in \N \setminus \{ 0 \}.$
We observe that if $(x, \xi) \in Q^e_{2RN^{\mu+\nu -1}},$ then
$$a(x, \xi) - \sum_{j < N} a_j (x,\xi) = \sum_{j \geq N} \varphi_j(x,\xi) a_j(x,\xi).$$
Thus we have
$$\left| \sum_{j \geq N} \der   \left[ \varphi_j(x,\xi) a_j(x,\xi) \right] \right| \leq 
C^{ |\alpha| + |\beta| +1}
\alpha ! \beta ! \px ^{- |\beta| -N} \pxi ^{\tau-|\alpha| -N} e^{c|x|^{\frac{1}{s}}}\sum_{j \geq N}H_{jN \alpha \beta} (x, \xi)$$
where
$$H_{jN \alpha \beta} (x, \xi)=
\sum_{\stackrel{\gamma \leq \alpha}{\delta \leq \beta}}
\frac{(\alpha - \gamma )!^{\mu-1}(\beta - \delta )!^{\nu -1}}{\gamma ! \delta !} \cdot C^{2j - |\gamma|-|\delta|}(j!)^{\mu+\nu -1} \langle x \rangle ^{|\delta| +N-j}
\langle \xi \rangle ^{|\gamma|+N -j} |D_x^{\delta}D_{\xi}^{\gamma}
\varphi_j (x,\xi)|.$$
Arguing as above we can estimate
$$H_{jN \alpha \beta} (x, \xi) \leq C_4^{2N + |\alpha| + |\beta|+1} (N!)^{\mu+\nu -1}
(\alpha !)^{\mu-1}(\beta!)^{\nu -1}$$
and this concludes the proof.
\end{proof}

Using the same argument it is easy to prove that if we start from a sum $\sum\limits_{j \geq 0} a_j \in FS^{m}_{\mu, \nu}(\R^{2n}$) (respectively in $FS^{\infty,\tau}_{\mu, \nu,s}(\R^{2n})$ or in $FS^{\infty}_{\mu, \nu,s}(\R^{2n}$)) we can find a symbol $a $ in $\Gamma^{m}_{\mu, \nu}(\R^{2n})$ (respectively in $\Gamma^{\infty, \tau}_{\mu, \nu,s}(\R^{2n})$ or in $\Gamma^{\infty}_{\mu, \nu,s}(\R^{2n})$) equivalent to $\sum\limits_{j \geq 0} a_j$ in the corresponding class.

\begin{Prop} \label{regas}
Let $\mu,\nu,s$ be real numbers such that $1<\mu \leq \nu$ and $s >\mu+\nu-1$ and let $a \in \textbf{\textrm{SG}}^{0,\infty}_{\mu, \nu, s}(\R^{2n})$ such that $ a \sim 0$ in $FS^{0,\infty}_{ \mu,\nu, s}(\R^{2n})$, then $a \in \mathcal{S}_{\mu+\nu-1}(\R^{2n})$. In particular, the operator $a(x,D)$ is $\mathcal{S}_{\mu+\nu-1}$-regularizing, that is it maps continuously $(\mathcal{S}_{\mu+\nu-1})'(\R^n)$ into $\mathcal{S}_{\mu+\nu-1}(\R^n)$.
\end{Prop}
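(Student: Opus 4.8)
The plan is to show that $a \sim 0$ in $FS^{0,\infty}_{\mu,\nu,s}(\R^{2n})$ forces $a$ to satisfy Gelfand-Shilov estimates of type $\mathcal{S}_{\mu+\nu-1}(\R^{2n})$, i.e. that there exist $C,\epsilon>0$ with $|\der a(x,\xi)| \leq C^{|\alpha+\beta|+1}(\alpha!\beta!)^{\mu+\nu-1} e^{-\epsilon(|x|^{1/(\mu+\nu-1)}+|\xi|^{1/(\mu+\nu-1)})}$; the continuity statement then follows from the standard kernel estimates for pseudodifferential operators with $\mathcal{S}_{\mu+\nu-1}$-symbols. The key point is that the condition $a\sim 0$ means precisely that, taking $a_j' = 0$ in Definition \ref{equiv} (with $\tau = 0$), for every $N$ and every $\alpha,\beta$ the derivative $\der a(x,\xi)$ is controlled on the exterior region $Q^e_{BN^{\mu+\nu-1}}$ by $C^{|\alpha|+|\beta|+2N}(\alpha!)^\mu(\beta!)^\nu (N!)^{\mu+\nu-1}\pxi^{-|\alpha|-N}\px^{-|\beta|-N} e^{c|x|^{1/s}}$. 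The freedom to choose $N$ arbitrarily large on a fixed point $(x,\xi)$ far from the origin is what converts the algebraic decay in $\px^{-N}\pxi^{-N}$ into exponential decay.

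First I would fix $(x,\xi)\in\R^{2n}$ with $\pxi\geq 1$ and $\px\geq 1$ large, and for the given $\alpha,\beta$ choose the optimal $N$ in the estimate above. Writing $\Lambda := \min\{\px,\pxi\}$, the bound on $|\der a(x,\xi)|$ contains the factor $(N!)^{\mu+\nu-1} (C^2/\Lambda)^N \lesssim (N!)^{\mu+\nu-1} (C^2/\Lambda)^N$; optimizing over $N \sim c_0 \Lambda^{1/(\mu+\nu-1)}$ via the standard estimate $\inf_N (N!)^{k}t^{-N} \leq C e^{-\epsilon t^{1/k}}$ (valid for $k = \mu+\nu-1 > 1$) yields, after absorbing the $\alpha,\beta$ factors, a bound
\[
|\der a(x,\xi)| \leq C^{|\alpha|+|\beta|+1}(\alpha!)^\mu(\beta!)^\nu e^{c|x|^{1/s}} e^{-\epsilon(\px^{1/(\mu+\nu-1)}+\pxi^{1/(\mu+\nu-1)})}.
\]
Here one must keep track of the constraint that the chosen $N$ be large enough that $(x,\xi)\in Q^e_{BN^{\mu+\nu-1}}$, i.e. $N^{\mu+\nu-1}\lesssim \Lambda$; this is exactly the regime in which the optimization is performed, so it is consistent. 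The term $e^{c|x|^{1/s}}$ is harmless because $s > \mu+\nu-1$, hence $1/s < 1/(\mu+\nu-1)$, so $c|x|^{1/s} \leq \tfrac{\epsilon}{2}\px^{1/(\mu+\nu-1)} + C'$ for a new constant; absorbing it leaves a genuine Gelfand-Shilov estimate with a possibly smaller $\epsilon$. Replacing $(\alpha!)^\mu(\beta!)^\nu$ by the symmetric bound $(\alpha!\beta!)^{\mu+\nu-1}$ (legitimate since $\mu,\nu \leq \mu+\nu-1$) gives $a\in\mathcal{S}_{\mu+\nu-1}(\R^{2n})$.

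The main obstacle is the bookkeeping in the optimization step: one must verify that the same choice of $N$ simultaneously produces exponential decay in \emph{both} $\px$ and $\pxi$ and remains admissible (the point must lie in the exterior region $Q^e_{BN^{\mu+\nu-1}}$ for that $N$). The natural device is to treat separately the three regions $\{\px \le \pxi\}$, $\{\pxi \le \px\}$ and to note that in each the product $\px^{-N}\pxi^{-N}$ dominates $\Lambda^{-N}\cdot(\max)^{-N}$, so optimizing $N$ against $\Lambda$ gives decay $e^{-\epsilon\Lambda^{1/(\mu+\nu-1)}}$ while the surplus factor $(\max)^{-N}$ with $N\sim\Lambda^{1/(\mu+\nu-1)}$ still contributes at least $e^{-\epsilon'(\max)^{1/(\mu+\nu-1)} \cdot (\Lambda/\max)^{\cdots}}$ — this requires a short lemma, but it is the kind of factorial/exponential juggling already invoked repeatedly in the paper (e.g. via Lemma \ref{paola}), so I would state it as such and refer to \cite{CR, CPP1} for the routine details. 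The final continuity claim on $(\mathcal{S}_{\mu+\nu-1})'(\R^n)\to\mathcal{S}_{\mu+\nu-1}(\R^n)$ is then immediate: the Schwartz kernel of $a(x,D)$ is $K(x,y)=\int e^{i\langle x-y,\xi\rangle}a(x,\xi)\,\dslash\xi$, and the exponential decay of $a$ in $\xi$ together with its Gevrey-$(\mu+\nu-1)$ regularity and exponential decay in $x$ give $K\in\mathcal{S}_{\mu+\nu-1}(\R^{2n})$, which is exactly the condition for the operator to be $\mathcal{S}_{\mu+\nu-1}$-regularizing.
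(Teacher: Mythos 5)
Your overall strategy matches the paper's: take the $a\sim 0$ bound with free $N$, optimize $N$ to convert the algebraic factor into exponential decay, then absorb $e^{c|x|^{1/s}}$ using $s>\mu+\nu-1$, and finish with the kernel argument. However, the optimization step as you have written it would fail. You set $\Lambda=\min\{\px,\pxi\}$ and optimize $(N!)^{\mu+\nu-1}\Lambda^{-N}$; by the $\inf$ estimate this gives at best $e^{-\epsilon\Lambda^{1/(\mu+\nu-1)}}$, i.e.\ exponential decay only in the \emph{minimum} of $\px,\pxi$. That is strictly weaker than the claimed $e^{-\epsilon(\px^{1/(\mu+\nu-1)}+\pxi^{1/(\mu+\nu-1)})}$: if, say, $\px$ stays bounded while $\pxi\to\infty$, the factor $e^{-\epsilon\Lambda^{1/(\mu+\nu-1)}}$ stays bounded away from zero and yields no decay in $\xi$ at all, so $a\in\mathcal S_{\mu+\nu-1}(\R^{2n})$ does not follow. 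Your ``bookkeeping'' paragraph does not close this: at $N\sim\Lambda^{1/(\mu+\nu-1)}$ the surplus factor $(\max)^{-N}$ only gives $e^{-\Lambda^{1/(\mu+\nu-1)}\log\max}$, which is far weaker than the needed $e^{-c\max^{1/(\mu+\nu-1)}}$ when $\max\gg\Lambda$.

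The fix, and what the paper actually does, is to optimize the full factor $(N!)^{\mu+\nu-1}(\px\pxi)^{-N}$ against the \emph{product} $t=\px\pxi$ (invoking \cite[Lemma~3.2.4]{Rodino}), which yields $e^{-\tau(\px\pxi)^{1/(\mu+\nu-1)}}$; since $\px\pxi\geq\max\{\px,\pxi\}$ (both weights are $\geq 1$), one has $(\px\pxi)^{1/(\mu+\nu-1)}\geq\tfrac12\bigl(\px^{1/(\mu+\nu-1)}+\pxi^{1/(\mu+\nu-1)}\bigr)$, and the desired bound follows. You also misstate the admissibility constraint: being in $Q^e_{BN^{\mu+\nu-1}}$ requires $N^{\mu+\nu-1}\lesssim\max\{\px,\pxi\}$, not $\lesssim\Lambda$. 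Once the optimization is done against $\px\pxi$ with the correct admissibility window, the rest of your argument (absorbing $e^{c|x|^{1/s}}$ and the kernel step) is sound and identical to the paper's.
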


\begin{proof}
 It is sufficient to prove that if $a \sim 0,$ then $a \in \mathcal{S}_{\mu+\nu-1} (\R^{2n}).$ This would imply that the Schwartz kernel of $a(x,D)$ belongs to $\mathcal{S}_{\mu+\nu-1}(\R^{2n})$ which gives the assertion. By Definition \ref{equiv}, there exist $B,C,\tau>0$ such that, for every $(x,\xi) \in \R^{2n}$ we have:
\begin{eqnarray*}\left| \der a(x,\xi) \right| &\leq & CA^{|\alpha|+|\beta|}(\alpha!)^\mu(\beta!)^{\nu} \pxi^{-|\alpha|}
\px^{-|\beta|} e^{c|x|^{1/s}} \cdot \inf_{0 \leq N \leq (B^{-1}\pxi \px)^{\frac{1}{\mu+\nu -1}}} \frac{A^{2N} (N!)^{\mu+\nu-1}}{\pxi^N \px^N} \\ 
&\leq& C A^{|\alpha|+|\beta|}(\alpha!)^\mu (\beta!)^{\nu} e^{c|x|^{\frac{1}{s}}} \exp{\left[-\tau (\px \pxi)^{\frac1{\mu+\nu-1}} \right]}
\\
&\leq& C A^{|\alpha|+|\beta|}(\alpha!)^\mu (\beta!)^{\nu} e^{c|x|^{\frac{1}{s}}} \exp{ \left[-\tau (|x|^{\frac{1}{\mu+\nu-1}}+|\xi|^{\frac{1}{\mu+\nu-1}}) \right]}.
\end{eqnarray*}
cf. \cite[Lemma 3.2.4]{Rodino}.
Since $\max\{\mu,\nu\} <\mu+\nu-1<s$, we get $$
\left| \der a(x,\xi) \right| \leq C'A^{|\alpha|+|\beta|}(\alpha!)^\mu (\beta!)^{\nu} \exp{ \left[-\frac{\tau}{2} (|x|^{\frac{1}{\mu+\nu-1}}+|\xi|^{\frac{1}{\mu+\nu-1}}) \right]}.$$
Hence $a \in \mathcal{S}_{\mu+\nu-1}(\R^{2n}).$
This concludes the proof.
\end{proof}

The next result concerns the regularity and decay properties of the Schwartz kernel of our operators far from the diagonal. 

\begin{Prop}
\label{ker}
Let $a \in \textbf{\textrm{SG}}_{\mu, \nu,s}^{\tau,\infty}(\R^{2n})$ with $s > \mu +\nu-1$. For $k \in (0,1)$ define: $$\Omega_k = \{(x,y) \in \R^{2n}:|x-y| > k\px \}.$$ Then the kernel $K$ of $a$, defined by 
$$K(x,y) =\int_{\R^n} e^{i \langle x-y,\xi \rangle} a(x,\xi) \dslash \xi,$$ is in $C^{\infty}(\R^{2n} \setminus \Delta)$, where $\Delta$ denotes the diagonal in $\R^{2n}$ and there exist positive constants $C,\tilde c$ depending on $k$ such that
\begin{equation}
\label{rego}
\left| D_x^{\beta} D_y^{\gamma} K(x,y) \right| \leq C^{|\beta|+|\gamma|+1}(\beta! \gamma!)^{\nu} \exp{ \left[-\tilde c (|x|^{\frac{1}{\mu+\nu-1}} + |y|^{\frac{1}{\mu+\nu-1}}) \right]}
\end{equation}
for every $(x,y) \in \overline{\Omega_k}$ and for every $\beta, \gamma \in \N^n.$ 
\end{Prop}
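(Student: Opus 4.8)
The plan is to prove the kernel estimate \eqref{rego} by the classical technique of integration by parts in the oscillatory integral, exploiting the fact that on $\Omega_k$ the quantity $|x-y|$ is comparable to $\px$ and that $a$ has at most exponential growth in $x$ and only polynomial (times Gevrey factorial) growth in its $\xi$-derivatives. First I would fix $\beta,\gamma\in\N^n$ and write, for $(x,y)\in\Omega_k$,
$$
D_x^\beta D_y^\gamma K(x,y)=\int_{\R^n}e^{i\langle x-y,\xi\rangle}\,q_{\beta\gamma}(x,\xi)\,\dslash\xi,\qquad
q_{\beta\gamma}(x,\xi)=\sum_{\beta'\le\beta}\binom{\beta}{\beta'}(i\xi)^{\beta-\beta'+\gamma}\,\partial_x^{\beta'}a(x,\xi),
$$
which by \eqref{doubleinfinite} satisfies $|\partial_\xi^\alpha q_{\beta\gamma}(x,\xi)|\le C^{|\alpha|+|\beta|+|\gamma|+1}(\alpha!)^\mu(\beta!\gamma!)^\nu\px^{|\beta|}\pxi^{\tau+|\beta|+|\gamma|-|\alpha|}e^{c|x|^{1/s}}$. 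The convergence of this integral is only in the oscillatory sense, so one first regularizes; I would suppress that standard point.

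The core step is the identity ${}^tL\,e^{i\langle x-y,\xi\rangle}=e^{i\langle x-y,\xi\rangle}$ for the transpose of the operator $L=|x-y|^{-2}\langle x-y,D_\xi\rangle$, applied $N$ times, giving
$$
D_x^\beta D_y^\gamma K(x,y)=\int_{\R^n}e^{i\langle x-y,\xi\rangle}\,L^N q_{\beta\gamma}(x,\xi)\,\dslash\xi .
$$
Since $|x-y|>k\px\ge k\pxi_h^{0}$... more precisely $|x-y|\ge k\px$ and also, on the region where $|\xi|$ is large, $|x-y|$ dominates; I would split $\R^n_\xi$ into $|\xi|\le \epsilon|x-y|$ and $|\xi|>\epsilon|x-y|$. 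On the first region, each application of $L$ gains a factor $|x-y|^{-1}\pxi\le\epsilon$ up to constants together with the factorial cost $(N!)^\mu$ from the $\xi$-derivatives of $q_{\beta\gamma}$, so choosing $N$ of the order of $|x-y|^{1/\mu}$ (equivalently $\px^{1/\mu}$) and optimizing the resulting bound $C^{2N}(N!)^\mu\px^{-N}$ by the standard inequality (cf. \cite[Lemma 3.2.4]{Rodino}) produces a factor $\exp[-c'\px^{1/\mu}]$, which since $\mu<\mu+\nu-1$ absorbs the $e^{c|x|^{1/s}}$ term and yields the claimed $\exp[-\tilde c|x|^{1/(\mu+\nu-1)}]$; on the second region $\pxi$ itself is comparable to $|x-y|\approx\px$ so one also has enough decay in $\xi$ to integrate and to extract $\exp[-\tilde c(|x|^{1/(\mu+\nu-1)}+|\xi|^{1/(\mu+\nu-1)})]$ before integrating in $\xi$. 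Finally, on $\Omega_k$ one has $|y|\le |x|+|x-y|$ and $|x-y|\le|x|+|y|$, and since $|x-y|>k\px$ forces $|x|$ and $|x-y|$ to be comparable up to constants, the weight $\exp[-\tilde c|x|^{1/(\mu+\nu-1)}]$ can be symmetrized into $\exp[-\tilde c'(|x|^{1/(\mu+\nu-1)}+|y|^{1/(\mu+\nu-1)})]$, possibly shrinking $\tilde c$; the $C^\infty$ regularity of $K$ off the diagonal is automatic from the differentiation-under-the-integral just performed.

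The main obstacle I anticipate is the bookkeeping in the optimization over $N$: one must carefully track that the factorial loss $(N!)^{\mu+\nu-1}$ coming from $L^N$ acting on $q_{\beta\gamma}$ (the $\xi$-derivatives cost $(N!)^\mu$ from the symbol class and there is an extra $(N!)^{\nu-1}$-type combinatorial factor from the iterated Leibniz expansion of $L^N$, or more simply bound $L^N$ crudely) is still beaten by the gain $\px^{-N}$ for $N\sim\px^{1/\mu}$, and to verify that the resulting exponent $1/\mu$ is $\ge 1/(\mu+\nu-1)$ so that after using $s>\mu+\nu-1$ one genuinely dominates the growth $e^{c|x|^{1/s}}$. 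Everything else — the splitting of the $\xi$-integral, the elementary estimate $|x-y|\approx\px$ on $\Omega_k$, and the symmetrization in $x,y$ — is routine, and I would leave those computations to the reader as the paper does elsewhere.
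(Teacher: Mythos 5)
Your overall strategy (integrate by parts in $\xi$ using the exponential phase and then optimize the number $N$ of derivatives to extract Gevrey-exponential decay in $|x-y|$, afterwards using $1/\mu>1/(\mu+\nu-1)>1/s$ and $|x-y|>k\px$ to absorb the factor $e^{c|x|^{1/s}}$) is the same in spirit as the paper's, but the paper organizes it differently: it introduces a Gevrey-adapted dyadic decomposition $\psi_N(\xi)$ supported on $\pxi\asymp RN^\mu$, integrates by parts exactly $N$ times on each piece, and separately uses the weight $1/m_{2\mu,\zeta}(x-y)$ together with $(1-\Delta_\xi)^j$ to produce $\exp[-c|x-y|^{1/\mu}]$. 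The decomposition is not cosmetic: on $\operatorname{supp}\psi_N$ the gain $\pxi^{-N}\approx(RN^\mu)^{-N}$ cancels the factorial $(N!)^\mu$ exactly, which makes the $\sum_N$ geometric in $R^{-1}$ with no residual $N$- (hence no residual $(\beta,\gamma)$-) dependence. This is the precise point where your sketch has a genuine gap. The crude bound
$|\partial_\xi^\alpha q_{\beta\gamma}|\lesssim(\alpha!)^\mu(\beta!\gamma!)^\nu\pxi^{\tau+|\beta|+|\gamma|-|\alpha|}e^{c|x|^{1/s}}$
treats all $|\alpha|=N$ $\xi$-derivatives as Gevrey-$\mu$, whereas derivatives falling on the polynomial $\xi^{\beta-\beta'+\gamma}$ cost only factorial weight $1$. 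Because the $\xi$-integral only converges once $N\ge n+\tau+|\beta|+|\gamma|$, in the regime $|x-y|\lesssim(|\beta|+|\gamma|)^\mu$ you are forced to take $N\sim|\beta|+|\gamma|$, and your estimate then produces a factor $(N!)^\mu\approx C^{|\beta|+|\gamma|}((|\beta|+|\gamma|)!)^\mu$, which for $\mu>\nu$ beats the target $C^{|\beta|+|\gamma|}(\beta!\gamma!)^\nu$ and cannot be recovered by the optimization, since the constraint is already active. You flag this exact bookkeeping issue as the main obstacle, but the resolution you offer (``bound $L^N$ crudely'' and then optimize) is precisely what fails; the fix is the finer Leibniz decomposition the paper does, tracking $\tfrac{(\beta_h+\gamma_h)!}{(\beta_h+\gamma_h-N_1)!}$ for derivatives on $\xi^{\beta+\gamma}$, $(N_2!)^\mu$ for derivatives on $\psi_N$, and $(N_3!)^\mu$ for derivatives on $a$, after which the $\binom{\alpha}{\beta'}$ binomials reassemble to $(\beta!\gamma!)^\nu$.

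Two smaller points. The claim that ``$|x-y|>k\px$ forces $|x|$ and $|x-y|$ to be comparable'' is false: on $\Omega_k$ one only has $|x|<|x-y|/k$, and $|y|$ (hence $|x-y|$) can be arbitrarily large for bounded $|x|$. If you only kept decay in $|x|$ the symmetrization to $|y|$ would break. The correct (and standard, used implicitly by the paper) conclusion is that the IBP delivers decay in $|x-y|$, and on $\Omega_k$ both $|x|$ and $|y|$ are $\lesssim|x-y|$ (with $|y|\le|x|+|x-y|\le(1+1/k)|x-y|$), which gives the symmetric bound after shrinking $\tilde c$. Also, the proposed split of the $\xi$-integration into $|\xi|\le\epsilon|x-y|$ and $|\xi|>\epsilon|x-y|$ does not seem to serve any purpose once $N$ is taken large enough; the paper's $\xi$-decomposition $\psi_N$, by contrast, is essential because it matches the Gevrey scale $\pxi\asymp N^\mu$. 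Finally, the exponent $\px^{|\beta|}$ in your estimate for $\partial_\xi^\alpha q_{\beta\gamma}$ should be $\px^{-|\beta'|}\le1$ (the symbol class gives decay, not growth, in $x$-derivatives); this is harmless but indicates the bookkeeping in your sketch is not yet tight enough to close the argument.
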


\textit{Proof of Proposition \ref{ker}.}
First we observe that for any fixed $R>0$ we can find a partition of unity $\psi_N(\xi)$ such that $$ supp \psi_0 \subset \{ \xi: \pxi \leq 3R \}$$
$$ supp \psi_N \subset \{\xi: 2RN^{\mu} \leq \pxi \leq 3R(N+1)^{
\mu} \}, N=1,2,...$$ and
$$\left| D_{\xi}^{\alpha} \psi_N (\xi) \right| \leq C^{|\alpha| +1}(\alpha !)^{\mu} \left[ R \sup(N^{\mu} ,1) \right]^{- |\alpha|}$$
for every $\alpha \in \N^n$ and for every $\xi \in \R^n.$ 
For every fixed $\theta \geq \nu$ we can write, for $u,v \in \mathscr{S}_s^\theta(\R^n):$
$$\langle K, v \otimes u \rangle = \sum_{N=0}^{\infty} \langle K_N, v \otimes u \rangle$$
with $$K_N(x,y) = \int_{\R^n} \fas a(x,\xi) \psi_N(\xi) \dslash \xi$$
so we may decompose
$$K = \sum_{N=0}^{\infty} K_N.$$
Let $k \in (0,1)$ and $(x,y) \in \overline{\Omega}_k.$ Let $h \in \{1,...,n \}$ such that $|x_h -y_h| \geq \frac{k}{n} \px.$
Then, integrating by parts infinitely many times, we get, for every $\alpha, \gamma \in \N^n:$
\beqsn 
D_x^{\alpha} D_y^{\gamma} K_N(x,y) &=& (-1)^{|\gamma|} \sum_{\beta \leq \alpha} \binom{\alpha}{\beta} \int_{\R^n}
\fas \xi^{\beta +\gamma} \psi_N (\xi) D_x^{\alpha - \beta} a(x,\xi) \dslash \xi 
\\
&=& (-1)^{|\gamma|+N}
\sum_{\beta \leq \alpha} \binom{\alpha}{\beta} (x_h -y_h)^{-N} \int_{\R^n}
\fas D_{\xi_h}^N \left[ \xi^{\beta +\gamma}  \psi_N (\xi) D_x^{\alpha - \beta} a(x,\xi) \right] \dslash \xi
\\
&=& (-1)^{|\gamma|+N}\cdot \frac{(x_h -y_h)^{-N}}{m_{2\mu, \zeta}(x-y)} 
\sum_{\beta \leq \alpha} \binom{\alpha}{\beta} \sum_{j=0}^{\infty} \frac{\zeta^j}{(j!)^{2\nu}} \int_{\R^n}
 \fas  \lambda_{hjN\alpha \beta \gamma}(x,\xi)\dslash \xi
 \eeqsn
with 
\begin{equation}
\label{lambda}
\lambda_{hjN\alpha \beta \gamma}(x,\xi) = (1-\Delta_{\xi})^j D_{\xi_h}^N \left[\xi^{\beta +\gamma}  \psi_N (\xi) D_x^{\alpha - \beta} a(x,\xi) \right].
\end{equation}
Let $e_h$ be the h-th vector of the canonical basis of $\R^n$ and $ \beta_h = \langle \beta, e_h \rangle, \gamma_h = \langle \gamma, e_h \rangle.$ By Leibniz formula  we obtain 
\beqsn
\lambda_{hjN\alpha \beta \gamma}(x,\xi) &=& \sum_{\stackrel{N_1+N_2+N_3 =N}{N_1 \leq \beta_h + \gamma_h}}(-i)^{N_1}
\frac{N!}{N_1!N_2!N_3!} \cdot \frac{(\beta_h +\gamma_h)!}{(\beta_h+\gamma_h-N_1)!} \cdot 
\\
&&\cdot (1-\Delta_{\xi})^j \left[\xi^{\beta +\gamma-N_1e_h}  D_{\xi_h}^{N_2}\psi_N (\xi) D_{\xi_h}^{N_3} D_x^{\alpha - \beta} a(x,\xi) \right].
\eeqsn
Hence
\beqsn
\left| \lambda_{hjN\alpha \beta \gamma}(x,\xi) \right|& \leq &C \sum_{\stackrel{N_1+N_2+N_3 =N}{N_1 \leq \beta_h + \gamma_h}} \frac{N!}{N_1!N_2!N_3!} \cdot \frac{(\beta_h +\gamma_h)!}{(\beta_h+\gamma_h-N_1)!}
C_1^{|\alpha-\beta| +N_2 +N_3} \cdot
\\
&&\cdot (N_2!N_3!)^{\mu} \left[(\alpha-\beta)!\right]^{\nu} C_2^j (j!)^{2\mu} \left(\frac{1}{RN^{\mu}} \right)^{N_2} \pxi^{\tau+|\beta|+|\gamma|-N_1- N_3} e^{c|x|^{\frac{1}{s}}}. 
\eeqsn
We observe that on the support of $\psi_N$ we have $2RN^{\mu} \leq \pxi \leq 3R(N+1)^{\mu}.$ Thus from standard factorial inequalities it follows that
$$\left| \lambda_{hjN\alpha \beta \gamma}(x,\xi) \right| \leq  C_1^{|\alpha|+ |\gamma|+1}(\alpha!\gamma!)^{\nu}
C_2^j (j!)^{2\mu} \left( \frac{C_3}{R} \right)^N e^{c|x|^{\frac{1}{s}}} $$
with $C_3$ independent of $R.$ From these estimates, choosing $\zeta < \frac{1}{C_2},$ we deduce that
$$\left| D_x^{\alpha} D_y^{\gamma}K_N(x,y) \right| \leq C_4^{|\alpha|+ |\gamma|+1}(\alpha!\gamma!)^{\nu}
\left( \frac{C_5}{R} \right)^N \exp{ \left[ c |x|^{\frac{1}{s}} - c\zeta^{\frac{1}{\mu}} |x-y|^{\frac{1}{\mu}} \right]}$$
with $C_5=C_5(k)$ independent of $R.$
Finally, we observe that since $\mu<\mu+\nu-1<s$, then there exists $c_k>0$ such that
$$\sup_{(x,y) \in \Omega_k} \exp{\left[ c_k(|x|^{\frac{1}{\mu+\nu-1}}+|y|^{\frac{1}{\mu+\nu-1}})- c\zeta^{\frac{1}{\nu}}|x-y|^{\frac{1}{\mu}} +c|x|^{\frac{1}{s}} \right]} \leq 1.$$ 
Then, choosing $R$ sufficiently large, we obtain the estimates \eqref{rego}.
\qed
\\

\begin{Th}\label{reduction}
Let $\mu,\nu,s, \tau \in \R$ such that $\mu >1, \nu >1, s > \mu+\nu-1$ and let $A$ be defined by \eqref{opampl} for some $a \in \pig$. Then there exists $b\in \gam$ such that $A= b(x,D) +R$ for some $\mathcal{S}_{\mu+\nu-1}$-regularizing operator $R$. Moreover, we have $b \sim \sum\limits_{j \geq 0} b_j$ in $FS^{\tau,\infty}_{\mu,\nu,s}(\R^{2n})$, where
\begin{equation} \label{asympt}
b_j(x,\xi) = \sum_{|\alpha|=j} (\alpha!)^{-1} \partial_\xi^\alpha D_y^\alpha a(x,y,\xi)_{\left|_{y=x} \right.}.
\end{equation}
\end{Th}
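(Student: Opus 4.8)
## Proof proposal for Theorem \ref{reduction}

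The plan is to follow the classical oscillatory-integral reduction of an amplitude (double-symbol) operator to a left-symbol operator, keeping careful track of the Gelfand-Shilov/\textbf{SG} constants so that the error is $\mathcal{S}_{\mu+\nu-1}$-regularizing. First I would write, for $u \in \mathscr{S}_s^\theta(\R^n)$,
\beqsn
Au(x) = \iint e^{i\langle x-y,\xi\rangle} a(x,y,\xi) u(y)\, dy \dslash \xi
\eeqsn
as an iterated oscillatory integral (justified by \eqref{opampl} and the mapping properties already recorded), and then perform a Taylor expansion of $y \mapsto a(x,y,\xi)$ at $y=x$ of order $N$:
\beqsn
a(x,y,\xi) = \sum_{|\alpha|<N} \frac{(y-x)^\alpha}{\alpha!} \partial_y^\alpha a(x,x,\xi) + N\sum_{|\alpha|=N} \frac{(y-x)^\alpha}{\alpha!}\int_0^1 (1-t)^{N-1}\partial_y^\alpha a(x,x+t(y-x),\xi)\, dt.
\eeqsn
Substituting $(y-x)^\alpha e^{i\langle x-y,\xi\rangle} = (-D_\xi)^\alpha e^{i\langle x-y,\xi\rangle}$ and integrating by parts in $\xi$ turns the $\alpha$-th term of the finite sum into an operator with symbol $(\alpha!)^{-1}\partial_\xi^\alpha D_y^\alpha a(x,y,\xi)_{|y=x}$, which is exactly $b_j$ in \eqref{asympt} after summing over $|\alpha|=j$. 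One checks directly from the defining estimates of $\pig$ that $\sum_{j\geq 0} b_j$ is a legitimate element of $FS^{\tau,\infty}_{\mu,\nu,s}(\R^{2n})$: the factors $(\alpha!)^{-1}$ together with the $(\alpha!)^{-\mu}(\beta!\gamma!)^{-\nu}$ growth and the $\langle(x,y)\rangle^{-|\beta+\gamma|}\langle x-y\rangle^{|\beta+\gamma|}$ factor (which becomes $\px^{|\cdot|}$ on the diagonal) reproduce precisely the bound \eqref{dev} with the shift $\pxi^{+j}\px^{+j}$ coming from the $2|\alpha|=2j$ lost derivatives. Then Proposition \ref{sviluppo} produces a genuine symbol $b \in \gam$ with $b \sim \sum_j b_j$.

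Next I would identify the remainder $R = A - b(x,D)$. It is the sum of two contributions: the operator with amplitude $a(x,y,\xi) - \sum_{|\alpha|<N}(\alpha!)^{-1}(y-x)^\alpha \partial_y^\alpha a(x,x,\xi)$ — i.e. the $N$-th Taylor remainder, written as an oscillatory integral after the integration-by-parts trick — and the difference between $b(x,D)$ and the operator associated to $\sum_{|\alpha|<N} b_j(x,D)$, which by $b \sim \sum b_j$ has symbol in $\mathcal{S}_{\mu+\nu-1}(\R^{2n})$ by Proposition \ref{regas} and hence is $\mathcal{S}_{\mu+\nu-1}$-regularizing. For the Taylor-remainder piece one estimates its Schwartz kernel
\beqsn
K_N(x,z) = N\sum_{|\alpha|=N}\frac{1}{\alpha!}\int_0^1 (1-t)^{N-1}\!\!\int e^{i\langle x-z,\xi\rangle}(-D_\xi)^\alpha\big[\partial_y^\alpha a(x,x+t(x-z)\cdot(-1),\xi)\big]\, \dslash\xi\, dt
\eeqsn
(with appropriate sign bookkeeping), optimizing the choice of $N \sim (\px\pxi)^{1/(\mu+\nu-1)}$ as in the proof of Proposition \ref{ker}, to obtain exponential decay of the form $\exp[-c(|x|^{1/(\mu+\nu-1)}+|z|^{1/(\mu+\nu-1)})]$ together with Gevrey-$(\mu+\nu-1)$ control of all derivatives; since $\mu+\nu-1 < s$, the nuisance factor $e^{c|x|^{1/s}}$ from the amplitude class is absorbed. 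This shows $K_N \in \mathcal{S}_{\mu+\nu-1}(\R^{2n})$, hence $R$ is $\mathcal{S}_{\mu+\nu-1}$-regularizing, i.e. it maps $(\mathcal{S}_{\mu+\nu-1})'(\R^n)$ continuously into $\mathcal{S}_{\mu+\nu-1}(\R^n)$.

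The main obstacle, and the step I would spend the most care on, is the combinatorial bookkeeping in the kernel estimate for the Taylor remainder: one must show that after applying $(1-\Delta_\xi)^j$ (via the convergence-factor $m_{\kappa,\zeta}$ as in Lemma \ref{paola}) and Leibniz-expanding $(-D_\xi)^\alpha$ against the cutoffs $\psi_N$, the accumulated constants are of the form $C^{|\beta|+|\gamma|+1}(\beta!\gamma!)^\nu (C'/R)^N$ with $C'$ independent of $R$, so that enlarging $R$ forces summability and exponential smallness. This is where the interplay of the three indices $\mu,\nu,s$ is genuinely used — the hypothesis $s>\mu+\nu-1$ is exactly what lets the bad $e^{c|x|^{1/s}}$ lose to the good $e^{-\tilde c|x|^{1/(\mu+\nu-1)}}$ on the region $\Omega_k$ where the kernel lives — and it parallels but slightly extends the argument already given for Proposition \ref{ker}. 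The remaining verifications (that $b_j$ has the claimed finite orders, that the asymptotic-sum estimate \eqref{dev} holds, that the iterated integrals converge) are routine applications of the factorial inequalities used repeatedly in the paper, and I would leave their details to the reader.
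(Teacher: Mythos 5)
Your overall plan — Taylor expansion of $a(x,y,\xi)$ at $y=x$, integration by parts to move $(y-x)^\alpha$ onto $\partial_\xi^\alpha$, Borel summation via Proposition \ref{sviluppo}, and a kernel estimate for the remainder in the spirit of Proposition \ref{ker} — is the correct strategy and is what the paper does. There is, however, a genuine gap in how you propose to handle the remainder $R = A - b(x,D)$.

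You write $R$ as the sum of (i) the operator with amplitude given by the order-$N$ Taylor remainder of $a$, and (ii) $b(x,D) - \mathrm{op}(\sum_{j<N} b_j)$, and you assert that (ii) has symbol in $\mathcal{S}_{\mu+\nu-1}(\R^{2n})$ "by Proposition \ref{regas}." This is false for any fixed $N$: the tail $b - \sum_{j<N} b_j$ is equivalent to $\sum_{j \geq N} b_j$ in $FS^{\tau,\infty}_{\mu,\nu,s}(\R^{2n})$, which is \emph{not} $\sim 0$ — it decays only like $\pxi^{-N}\px^{-N}$, not like a regularizing symbol. Proposition \ref{regas} applies to a symbol that is equivalent to the \emph{zero} formal sum, i.e.\ one that satisfies the order-$N$ decay estimate on $Q^e_{BN^{\mu+\nu-1}}$ for \emph{every} $N$ simultaneously; a fixed partial-sum tail does not. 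Correspondingly, for a fixed $N$ the Taylor-remainder piece (i) is also not regularizing. The heuristic "optimize $N$ pointwise like $N\sim(\px\pxi)^{1/(\mu+\nu-1)}$" is the right intuition, but it cannot be implemented on your two-term decomposition because the pieces must be chosen coherently as $N$ varies. The paper's proof makes this precise via the telescoping identity
$$a(x,y,\xi) - p(x,\xi) = (1-\varphi_0(x,\xi))a(x,y,\xi) + \sum_{N\ge 0} (\varphi_N - \varphi_{N+1})(x,\xi)\Bigl(a(x,y,\xi) - \sum_{j\leq N}a_j(x,\xi)\Bigr),$$
where the cutoff $\varphi_N - \varphi_{N+1}$ is supported exactly where the order-$N$ Taylor estimate is useful; summing over $N$ and rearranging yields the pieces $\overline{K}$, $\sum_{\alpha\neq 0} I_\alpha$, and $\sum_N W_N$, each of which is individually in $\mathcal{S}_{\mu+\nu-1}(\R^{2n})$. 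This partition-of-unity device is the missing ingredient in your argument. A secondary omission: you should also first insert a cutoff $\chi(x,y)$ supported near the diagonal and invoke Proposition \ref{ker} to discard the far-from-diagonal part of the amplitude (which is already regularizing); without this, the Taylor remainder estimates, which rely on $\langle x-y\rangle \lesssim \px$, are not available.
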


\begin{proof}

Let $\chi \in C^{\infty}(\R^{2n})$ such that
\begin{equation}\label{ipoh}
\chi(x,y) = 
\left\{
\begin{array}{lll}
1 &\text{if} & |x-y| \leq \frac{1}{4}\px \\
0 &\text{if} & |x-y| \geq \frac{1}{2}\px
\end{array}
\right.
\end{equation}

\noindent
and 
$$ \left|D_x^{\beta} D_y^{\gamma} \chi (x,y) \right|
\leq C^{|\beta| +|\gamma| +1} ( \beta ! \gamma !)^{\nu}$$
for all $\beta, \gamma \in \N^n$ and $(x,y) \in \R^{2n}.$
We may decompose $a$ as the sum of two elements of $\pig$ writing$$
a(x,y, \xi) = \chi(x,y)a(x,y,\xi)+ (1-\chi(x,y))a(x,y,\xi).$$
Furthermore, it follows from Theorem \ref{ker} that $(1-\chi(x,y))a(x,y,\xi)$ defines a $\nu$-regularizing operator.
Hence, eventually perturbing $A$ with a $\nu$-regularizing operator, we can assume that $a(x,y,\xi) $ is supported on $\left( \R^{2n} \setminus \Omega_{\frac{1}{2}}\right) \times \R^n,$ where $\Omega_{\frac{1}{2}}$ is defined as in Theorem \ref{ker}. \\
It is trivial to verify that $\sum\limits_{j \geq 0}a_j$ defined by \eqref{asympt} belongs to $FS^{\tau,\infty}_{\mu,\nu,s}(\R^{2n}).$ By Proposition \ref{sviluppo} we can find a sequence $\varphi_j \in C^{\infty}(\R^{2n})$ depending on a parameter $R$ such that $$p(x,\xi) = \sum_{j \geq 0} \varphi_j(x,\xi)a_j(x,\xi)$$ defines an element of $\gam$ for $R$  large and $p \sim \sum\limits_{j \geq 0}a_j$ in $FS^{\tau,\infty}_{\mu,\nu,s}(\R^{2n}).$
Let $P= p(x,D).$ To prove the Theorem it is sufficient to show that the kernel $K(x,y)$ of $A-P$ is in $\mathcal{S}_{\mu+\nu-1}(\R^{2n}).$ \\
We can write
\beqsn a(x, y, \xi)-p(x,\xi) &=& (1- \varphi_0(x,\xi))a(x,y,\xi) 
\\
&&+ \sum_{N=0}^{\infty}(\varphi_N - \varphi_{N+1})(x,\xi) \left( a(x,y,\xi)- \sum_{j \leq N}a_j(x,\xi) \right).
\eeqsn
Consequently,
\begin{equation}
\label{nuclei}
K(x, y) = \overline{K}(x,y) + \sum_{N=0}^{\infty} K_N(x,y)
\end{equation}
where $$\overline{K}(x,y) = \int_{\R^n} \fas (1- \varphi_0(x,\xi))a(x,y,\xi)\dslash \xi,$$
$$ K_N(x,y)= \int_{\R^n} \fas (\varphi_N - \varphi_{N+1})(x,\xi) \left( a(x,y,\xi)- \sum_{j \leq N}a_j(x,\xi) \right)\dslash \xi.$$
A power expansion in the second argument gives for $N=1,2,...$
$$a(x,y,\xi)= \sum_{|\alpha| \leq N} (\alpha !)^{-1} (y-x)^{\alpha} \partial_y^{\alpha} a(x,x,\xi) + \sum_{|\alpha| =N+1} (\alpha !)^{-1} (y-x)^{\alpha} w_{\alpha}(x,y,\xi)$$
with $$w_{\alpha}(x,y, \xi)= (N+1) \int_0^1 \partial_y^{\alpha}a (x, x+t(y-x), \xi)(1-t)^{N}dt.$$
In view of our definition of the $a_j(x,\xi),$ integrating by parts, we obtain that
\beqsn 
K_N(x,y)&=&W_N(x,y)+ \sum_{1 \leq |\alpha| \leq N} \sum_{0 \neq \beta \leq \alpha} \frac{1}{\beta ! (\alpha - \beta) !} \cdot
\\
&&\cdot \int_{\R^n} \fas D_{\xi}^{\beta}
(\varphi_N - \varphi_{N+1})(x,\xi) (D_{\xi}^{\alpha -\beta}\partial_y^{\alpha}a)(x,x,\xi)\dslash \xi,
\eeqsn where for all $N=1,2,...$
\beqsn
W_N(x,y)&=&  \sum_{|\alpha| =N+1} \sum_{\beta \leq \alpha}
\frac{1}{\beta ! (\alpha - \beta )!} \cdot
\\
&&\cdot \int_{\R^n} \fas D_{\xi}^{\beta}
(\varphi_N - \varphi_{N+1})(x,\xi) D_{\xi}^{\alpha -\beta}w_{\alpha}(x,y,\xi) \dslash \xi.
\eeqsn  \\
Using an absolute convergence argument, we may re-arrange the sums under the integral sign. We also observe that
$$\sum_{N \geq |\alpha|}D_{\xi}^{\beta}(\varphi_N - \varphi_{N+1})(x,\xi)= D_{\xi}^{\beta} \varphi_{|\alpha|}(x,\xi).$$
Then we have
$$K=\overline{K} + \sum_{\alpha \neq 0}I_{\alpha} + \sum_{N=0}^{\infty} W_N$$
where $$I_{\alpha}(x,y)=\sum_{0 \neq \beta \leq \alpha} \frac{1}{\beta ! (\alpha - \beta )!}  \int_{\R^n} \fas D_{\xi}^{\beta}
\varphi_{|\alpha|}(x,\xi)D_{\xi}^{\alpha - \beta} \partial_y^{\alpha} a(x,x,\xi) \dslash \xi$$
and we may write $W_0(x,y) $ for $K_0(x,y).$ To conclude the proof, it is sufficient to prove that $\overline{K}$,
$\sum\limits_{\alpha \neq 0}I_{\alpha}$, $\sum\limits_{N=0}^{\infty} W_N \in \mathcal{S}_{\mu+\nu-1}(\R^{2n}).$ 
First of all, we have to estimate the derivatives of $\overline{K}$ for $(x,\xi) \in \textit{supp}(
1-\varphi_0(x,\xi)),$ i.e. for $\px \leq R, \pxi \leq R.$ We have
\beqsn
\left|x^k y^h D_x^{\delta}D_y^{\gamma} \overline{K}(x,y) \right| &=&  \left| x^k y^h \sum_{\stackrel{\gamma_1 +\gamma_2 = \gamma}{\delta_1 +\delta_2 +\delta_3 = \delta}} \frac{\gamma! \delta!}{\gamma_1!\gamma_2!\delta_1! \delta_2! \delta_3!}
\right. \cdot 
\\
&&\cdot \left.
(-1)^{|\gamma_1|} \int_{\R^n} \fas \xi^{\gamma_1 +\delta_1}D_x^{\delta_2}D_y^{\gamma_2}a(x,y,\xi) D_x^{\delta_3} (1-\varphi_0(x,\xi))d\xi \right| 
\\
&\leq&|x|^{|k|}|y|^{|h|} \sum_{\stackrel{\gamma_1 +\gamma_2 = \gamma}{\delta_1 +\delta_2 +\delta_3 = \delta}} \frac{\gamma! \delta!}{\gamma_1!\gamma_2!\delta_1! \delta_2! \delta_3!} C^{|\gamma_2|+|\delta_2|+|\delta_3|} (\gamma_2!\delta_2! \delta_3!)^{\nu}
\langle x-y \rangle^{|\gamma_2+\delta_2|}
\cdot 
\\
&&\cdot \exp{ \left[ a (|x|^{\frac{1}{s}} +|y|^{\frac{1}{s}}) \right]} \int_{\pxi \leq R}
\pxi^{\tau+|\gamma_1 +\delta_1|} \dslash \xi.
\eeqsn
Now, $a(x,y,\xi) $ is supported on $\left( \R^{2n} \setminus \Omega_{\frac{1}{2}}\right) \times \R^n$ and in this region $|y| \leq \frac{3}{2}\px$ so, there exist constants $C_1, C_2 >0$ depending on $R$ such that
$$\sup_{(x,y) \in \R^{2n}} \left |x^k y^h D_x^{\delta}D_y^{\gamma} \overline{K}(x,y) \right| \leq C_1 R^{|k|+|h|}
C_2^{|\gamma|+|\delta|}(\gamma! \delta!)^{\nu},$$ so $ \overline{K} \in \mathcal{S}_{\nu}(\R^{2n}) \subset \mathcal{S}_{\mu+\nu-1}(\R^{2n}).$
Consider now 
\beqsn
x^k y^h D_x^{\delta}D_y^{\gamma} I_{\alpha}(x,y)&=&  \sum_{0 \neq \beta \leq \alpha} \frac{1}{\beta! (\alpha -\beta)!} \sum_{\delta_1+\delta_2+\delta_3 = \delta} \frac{\delta !}{\delta_1!\delta_2! \delta_3!}(-1)^{|\gamma|}
x^k y^h \cdot
\\
&&\cdot \int_{\R^n} \fas \xi^{\gamma +\delta_1} D_x^{\delta_2}D_{\xi}^{\beta} \varphi_{|\alpha|}(x,\xi) 
D_x^{\delta_3} [(D_{\xi}^{\alpha -\beta}\partial_y^{\alpha}) a)(x,x,\xi)] d\xi
\\
&=& \sum_{0 \neq \beta \leq \alpha} \frac{1}{\beta! (\alpha -\beta)!} \sum_{\delta_1+\delta_2+\delta_3 = \delta} \frac{\delta !}{\delta_1!\delta_2! \delta_3!}(-1)^{|\gamma|} (-i)^h x^k 
\cdot 
\\
&&\cdot \int_{\R^n} e^{-i \langle y,\xi \rangle} \partial_{\xi}^h \left[ e^{i\langle x,\xi \rangle} \xi^{\gamma +\delta_1} D_x^{\delta_2}D_{\xi}^{\beta} \varphi_{|\alpha|}(x,\xi)D_x^{\delta_3}[( D_{\xi}^{\alpha -\beta}
 \partial_y^{\alpha} a)(x,x,\xi)] \right]\dslash \xi.
 \eeqsn
We need the estimates for $(x,\xi) \in \textit{supp} D_{\xi}^{\beta} \varphi_{|\alpha|}(x,\xi) \subset \overline{Q}_{2R|\alpha|^{\mu+\nu-1}} \setminus Q_{R|\alpha|^{\mu+\nu-1}}.$ Then, there exist $C_1,C_2,C_3>0$ such that
\beqsn
\left|x^k y^h D_x^{\delta}D_y^{\gamma} I_{\alpha}(x,y) \right| &\leq& C_1^{|h|+|k|+1}C_2^{|\alpha|}C_3^{|\gamma|+|\delta|} (k! h! \gamma!\delta!)^{s}(\alpha!)^{\nu} \px^{-|\alpha|} \cdot
\\
&&\cdot \sum_{0 \neq \beta \leq \alpha} (\beta !)^{\mu -1} \left[(\alpha-\beta)! \right]^{\mu - 1} \left(\frac{1}{R|\alpha|^{\mu+\nu -1}} \right)^{|\beta|} e^{c|x|^{\frac{1}{s}}}\int_{\pxi \leq 2R|\alpha|^{\mu+\nu-1}} \pxi^{m-|\alpha- \beta|}\dslash \xi
\eeqsn
with $C_2$ independent of $R.$ Now, since $ \mu+\nu-1 <s$ and $|x| \leq R|\alpha|^{\mu+\nu-1}$, we have that 
$$C_2^{|\alpha|}(\alpha !)^{\nu} \px^{-|\alpha|}\hskip-0.2cm\sum_{0 \neq \beta \leq \alpha} (\beta !)^{\nu -1} \left[(\alpha-\beta)! \right]^{\nu - 1} \left(\frac{1}{R|\alpha|^{\mu+\nu -1}} \right)^{|\beta|}\hskip-0.2cme^{c|x|^{\frac{1}{s}}} \int_{\pxi \leq 2R|\alpha|^{\mu+\nu-1}}\hskip-0.2cm \pxi^{-|\alpha- \beta|}d\xi \leq \left( \frac{C_4}{R} \right)^{|\alpha|}$$
with $C_4$ independent of $R.$ Finally, we conclude that
$$\sup_{(x,y) \in \R^{2n}} \left|x^k y^h D_x^{\delta}D_y^{\gamma} I_{\alpha}(x,y) \right| \leq C^{|h|+|k|+1}C_2^{|\gamma|+|\delta|} (k!h!\gamma! \delta!)^{\mu+\nu-1} \left( \frac{C_4}{R} \right)^{|\alpha|}.$$
Choosing $R> C_4,$ we obtain that $\sum\limits_{\alpha \neq 0}I_{\alpha} \in \mathcal{S}_{\mu+\nu-1}(\R^{2n}).$ \\ Arguing as for $I_{\alpha},$ we can prove that also$$
\sup_{(x,y) \in \R^{2n}} \left|x^k y^h D_x^{\delta}D_y^{\gamma} W_N(x,y) \right| \leq C_1^{|h|+|k|+1}C_2^{|\gamma|+|\delta|} (h!k!\gamma! \delta!)^{2\nu-1} \left( \frac{C}{R} \right)^N$$
 with $C$ independent of $R,$ which gives, for $R$ sufficiently large, that $\sum\limits_{N=0}^{\infty} W_N $ is in $  
\mathcal{S}_{\mu+\nu-1}(\R^{2n}).$ This concludes the proof.
\end{proof}

As a consequence of the previous theorem we obtain the two following results.

\begin{Prop}
\label{transpose}
Let $a \in \textbf{\textrm{SG}}^{\tau,\infty}_{\mu,\nu, s}(\R^n)$ with $s > \mu+\nu-1$ and let $^t A$ and $A^\ast$ be respectively the transpose and the $L^2$-adjoint of $A=a(x,D)$ defined by 
\begin{equation}
\label{rel}
\langle ^t Au,v \rangle = \langle u, Av \rangle, \quad u \in (\mathscr{S}_s^\theta)^\prime(\R^n), v \in \mathscr{S}_s^\theta(\R^n).
\end{equation}
and 
\begin{equation}\label{rel2}
\langle A^\ast u,v \rangle_{L^2} = \langle u, Av \rangle_{L^2}, \quad u,v \in \mathscr{S}_s^\theta(\R^n).
\end{equation}

Then, $^t A =B_1+R_1$ and $A^\ast =B_2+R_2$ where $R_j, j=1,2,$ are $\mathcal{S}_{\mu+\nu-1}$-regularizing operators and $B_j=b_j(x,D), j=1,2,$ with $b_j \in \textbf{\textrm{SG}}^{\infty}_{\mu,\nu ,s}(\R^n)$
with $$b_1(x,\xi) \sim \sum_{j \geq 0} \sum_{|\alpha|=j} (\alpha !)^{-1} \partial_{\xi}^{\alpha}D_x^{\alpha} a(x,-\xi)$$
and $$b_2(x,\xi) \sim \sum_{j \geq 0} \sum_{|\alpha|=j} (\alpha !)^{-1} \partial_{\xi}^{\alpha}D_x^{\alpha} \overline{a(x,\xi)}$$
in $FS^{\tau, \infty}_{\mu,\nu ,s}(\R^{2n}).$
\end{Prop}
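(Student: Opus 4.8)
The plan is to recognize $^tA$ and $A^\ast$ as amplitude operators of the type \eqref{opampl} and then to invoke Theorem \ref{reduction}. Writing $A=a(x,D)$ in the form $Au(x)=\iint e^{i\langle x-y,\xi\rangle}a(x,\xi)u(y)\,dy\dslash\xi$, its Schwartz kernel is $K(x,y)=\int e^{i\langle x-y,\xi\rangle}a(x,\xi)\dslash\xi$, understood as an oscillatory integral. By the defining relations \eqref{rel} and \eqref{rel2}, the kernel of $^tA$ is $K(y,x)$ and the kernel of $A^\ast$ is $\overline{K(y,x)}$; performing the change of variables $\xi\mapsto-\xi$ in the oscillatory integrals we obtain
$$^tAu(x)=\iint e^{i\langle x-y,\xi\rangle}\,a(y,-\xi)\,u(y)\,dy\dslash\xi,\qquad A^\ast u(x)=\iint e^{i\langle x-y,\xi\rangle}\,\overline{a(y,\xi)}\,u(y)\,dy\dslash\xi.$$
Hence $^tA$ and $A^\ast$ are operators of the form \eqref{opampl} with amplitudes $\tilde a(x,y,\xi)=a(y,-\xi)$ and $a^\ast(x,y,\xi)=\overline{a(y,\xi)}$, respectively; these identities are legitimate on $\mathscr{S}_s^\theta(\R^n)$ and, by duality, on $(\mathscr{S}_s^\theta)'(\R^n)$, since $s>\mu+\nu-1$ forces $\mu<s$ and Proposition \ref{gencontinforder} applies.

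Next I would verify that $\tilde a$ and $a^\ast$ belong to $\pig$. Since $\langle-\xi\rangle=\langle\xi\rangle$ and conjugation preserves the moduli of all derivatives, both $a(x,-\xi)$ and $\overline{a(x,\xi)}$ lie in $\gam$ whenever $a$ does; and any symbol $p\in\gam$, regarded as a function of $(x,y,\xi)$ depending only on $(y,\xi)$, belongs to $\pig$ — this is the case $t=1$ of the remark in Section \ref{sec2} that $p((1-t)x+ty,\xi)\in\pig$. Therefore $\tilde a,a^\ast\in\pig$.

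Finally I would apply Theorem \ref{reduction} to the two amplitude operators. It yields symbols $b_1,b_2\in\gam$ and $\mathcal{S}_{\mu+\nu-1}$-regularizing operators $R_1,R_2$ with $^tA=b_1(x,D)+R_1$, $A^\ast=b_2(x,D)+R_2$, and asymptotic expansions $b_k\sim\sum_{j\geq0}b_{k,j}$ in $FS^{\tau,\infty}_{\mu,\nu,s}(\R^{2n})$ where, by \eqref{asympt},
$$b_{1,j}(x,\xi)=\sum_{|\alpha|=j}\frac{1}{\alpha!}\,\partial_\xi^\alpha D_y^\alpha\big[a(y,-\xi)\big]\big|_{y=x},\qquad b_{2,j}(x,\xi)=\sum_{|\alpha|=j}\frac{1}{\alpha!}\,\partial_\xi^\alpha D_y^\alpha\big[\overline{a(y,\xi)}\big]\big|_{y=x}.$$
Since each amplitude depends on the single spatial variable $y$, evaluation at $y=x$ turns $D_y^\alpha$ into $D_x^\alpha$, so $b_{1,j}(x,\xi)=\sum_{|\alpha|=j}(\alpha!)^{-1}\partial_\xi^\alpha D_x^\alpha a(x,-\xi)$ and $b_{2,j}(x,\xi)=\sum_{|\alpha|=j}(\alpha!)^{-1}\partial_\xi^\alpha D_x^\alpha\overline{a(x,\xi)}$, which are exactly the claimed series. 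The only mildly delicate points are the rigorous oscillatory-integral computation of the kernels together with the substitution $\xi\mapsto-\xi$, and the membership $\tilde a,a^\ast\in\pig$; both are routine, so I do not expect a genuine obstacle — once $^tA$ and $A^\ast$ are written as amplitude operators with the correct amplitudes, the statement is an immediate consequence of Theorem \ref{reduction}.
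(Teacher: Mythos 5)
Your proof is correct and follows essentially the same route as the paper: both identify $^tA$ and $A^\ast$ as amplitude operators with amplitudes $a(y,-\xi)$ and $\overline{a(y,\xi)}$ in $\pig$ and then apply Theorem \ref{reduction}. The paper states this more tersely (it does not spell out the kernel computation, the verification that the amplitudes lie in $\pig$, or the $D_y^\alpha\mapsto D_x^\alpha$ substitution, and it says "the proof is similar for the adjoint"), but your added detail is accurate and does not change the argument.
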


\begin{proof}
By the formula \eqref{rel}, $^tP$ is defined by
$$^tAu(x) =  \int_{\R^{2n}} e^{i\langle x-y,\xi \rangle} a(y,-\xi)u(y)dy \dslash \xi, \quad u \in \mathscr{S}_s^\theta(\R^n). $$
Thus, $^tA$ is an operator with amplitude $a(y,-\xi) \in \Pi^{\tau, \infty}_{ \mu,\nu ,s}(\R^{3n})$. By Theorem \ref{reduction},
$^tA =b_1(x,D)+R_1$ where $R_1$ is $\mathcal{S}_{\mu+\nu-1}$-regularizing  and $b_1\in \textbf{\textrm{SG}}^{\tau, \infty}_{\mu,\nu ,s}(\R^n),$ with $$b_1(x,\xi) \sim \sum_{j \geq 0} \sum_{|\alpha| =j} (\alpha !)^{-1} \partial_{\xi}^{\alpha} D_x^{\alpha}a(x,-\xi). $$
The proof is similar for the adjoint.
\end{proof}

\begin{Th} \label{composition}
Let $a \in \textbf{\textrm{SG}}_{ \mu, \nu,s}^{\tau,\infty}(\R^{2n}), b \in \textbf{\textrm{SG}}_{\mu,\nu,s}^{\tau',\infty}(\R^{2n})$, with $s >\mu+\nu-1$. Then there exists $c \in \textbf{\textrm{SG}}_{ \mu, \nu,s}^{\tau+\tau',\infty}(\R^{2n})$  such that $a(x,D)b(x,D) =c(x,D) + R$, where $R$ is a $\mathcal{S}_{\mu+\nu-1}$-regularizing operator and
$$c(x,\xi) \sim \sum_{j\geq 0}\sum_{|\alpha|=j}\alpha!^{-1}\partial_\xi^\alpha a(x,\xi) D_x^\alpha b(x,\xi) \qquad \textrm{in} \quad FS^{\tau+\tau',\infty}_{\mu,\nu,s}(\R^{2n}).$$
\end{Th}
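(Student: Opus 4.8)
The plan is to realize the composition explicitly as an oscillatory integral operator, to extract its asymptotic expansion by a Taylor argument, and to close the proof with the symbol--synthesis and residual--regularity results already available. First I would fix $\theta\geq\nu$ (so that both $a(x,D)$ and $b(x,D)$ act continuously on $\mathscr{S}_s^\theta(\R^n)$ by Proposition \ref{gencontinforder}, since $s>\mu+\nu-1$ forces $\mu<s$) and write, for $u\in\mathscr{S}_s^\theta(\R^n)$,
\[
a(x,D)b(x,D)u(x)=\iiiint e^{i\langle x-y,\eta\rangle}e^{i\langle y-z,\xi\rangle}a(x,\eta)b(y,\xi)u(z)\,dz\,\dslash\xi\,dy\,\dslash\eta ,
\]
all integrals understood as iterated oscillatory integrals, regularized as in \eqref{opampl} by a cut-off $\chi(\delta y,\delta\eta)\chi(\delta z,\delta\xi)$, $\chi\in\mathcal{S}_\kappa^\kappa(\R^{2n})$, $\kappa=\min\{\mu,\nu\}$, $\chi(0,0)=1$. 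Rewriting the phase as $\langle x-z,\xi\rangle+\langle x-y,\eta-\xi\rangle$, substituting $\eta\mapsto\eta+\xi$, and using Fubini on the regularized integrals together with $\iint e^{i\langle x-y,\eta\rangle}g(y)\,dy\,\dslash\eta=g(x)$, one obtains $a(x,D)b(x,D)=\sigma(x,D)$, where
\[
\sigma(x,\xi)=\iint e^{i\langle x-y,\eta\rangle}a(x,\xi+\eta)b(y,\xi)\,dy\,\dslash\eta
\]
is itself defined as an oscillatory integral, and the $N=1$ instance of the estimates below shows in addition that $\sigma\in\textbf{\textrm{SG}}^{\tau+\tau',\infty}_{\mu,\nu,s}(\R^{2n})$. (One could alternatively set up the composition through a multiple--symbol formalism and iterate Theorem \ref{reduction}, but keeping $\sigma$ explicit is cleaner.)

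Next I would produce the asymptotic expansion. Taylor expanding $a(x,\xi+\eta)$ in $\eta$,
\[
a(x,\xi+\eta)=\sum_{|\alpha|<N}\frac{\eta^\alpha}{\alpha!}\partial_\xi^\alpha a(x,\xi)+N\sum_{|\alpha|=N}\frac{\eta^\alpha}{\alpha!}\int_0^1(1-t)^{N-1}\partial_\xi^\alpha a(x,\xi+t\eta)\,dt ,
\]
and using $\eta^\alpha e^{i\langle x-y,\eta\rangle}=(-D_y)^\alpha e^{i\langle x-y,\eta\rangle}$, integration by parts in $y$, and $\iint e^{i\langle x-y,\eta\rangle}g(y)\,dy\,\dslash\eta=g(x)$, the polynomial part contributes exactly $\sum_{|\alpha|<N}(\alpha!)^{-1}\partial_\xi^\alpha a(x,\xi)\,D_x^\alpha b(x,\xi)$, i.e. precisely the truncation of the formal sum in the statement. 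A routine check (each $\partial_\xi^\alpha$ supplying $\pxi^{-|\alpha|}$, each $D_x^\alpha$ supplying $\px^{-|\alpha|}$, the weights $e^{c|x|^{1/s}}$ adding and the factorials combining to the $(j!)^{\mu+\nu-1}$ required in \eqref{dev}) shows that $\sum_{j\geq0}\sum_{|\alpha|=j}(\alpha!)^{-1}\partial_\xi^\alpha a\,D_x^\alpha b$ is a bona fide element of $FS^{\tau+\tau',\infty}_{\mu,\nu,s}(\R^{2n})$.

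The core of the argument is then to show that the remainder
\[
r_N(x,\xi)=N\sum_{|\alpha|=N}\frac{1}{\alpha!}\iint e^{i\langle x-y,\eta\rangle}\eta^\alpha\Bigl(\int_0^1(1-t)^{N-1}\partial_\xi^\alpha a(x,\xi+t\eta)\,dt\Bigr)b(y,\xi)\,dy\,\dslash\eta ,
\]
together with all its $(x,\xi)$--derivatives, obeys the bound \eqref{dev} with $j=N$, uniformly for $(x,\xi)\in Q^e_{BN^{\mu+\nu-1}}$ and suitable $B,C,c>0$; this gives $\sigma\sim\sum_{j\geq0}\sum_{|\alpha|=j}(\alpha!)^{-1}\partial_\xi^\alpha a\,D_x^\alpha b$ in $FS^{\tau+\tau',\infty}_{\mu,\nu,s}(\R^{2n})$. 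I would carry this out exactly along the lines of the proofs of Theorem \ref{reduction} and Proposition \ref{ker}: a Gevrey partition of unity $\psi_M(\eta)$ adapted to the scale $\langle\eta\rangle\sim M^{\mu}$, integration by parts in $\eta$ against $\langle x-y\rangle^{-2}(1-\Delta_\eta)$ and in $y$ against $\langle\eta\rangle^{-2}(1-\Delta_y)$ with the number of steps calibrated to $M$, Leibniz expansions, the symbol estimates \eqref{doubleinfinite} for $a$ and $b$ (inserting $\langle\xi+t\eta\rangle\leq\sqrt2\,\pxi\langle\eta\rangle$ to detach the shift), and the elementary inequality $|y|^{1/s}\leq|x|^{1/s}+|x-y|^{1/s}$ to split off the weight $e^{c|y|^{1/s}}\leq e^{c|x|^{1/s}}e^{c|x-y|^{1/s}}$, the spurious factor $e^{c|x-y|^{1/s}}$ being absorbed by the genuine decay in $|x-y|$ gained from the calibrated integrations by parts --- here $s>\mu+\nu-1$ is used both to dominate this factor (since $1/s<1/\mu,1/\nu$) and in the accompanying factorial inequalities. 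Finally, Proposition \ref{sviluppo} furnishes $c\in\textbf{\textrm{SG}}^{\tau+\tau',\infty}_{\mu,\nu,s}(\R^{2n})$ equivalent to the same formal sum, so $\sigma-c\sim0$; since $\sigma-c\in\textbf{\textrm{SG}}^{\tau+\tau',\infty}_{\mu,\nu,s}(\R^{2n})$, the argument of Proposition \ref{regas} (which uses only the $\sim0$ estimate and is insensitive to the finite order $\tau+\tau'$) yields that $R:=a(x,D)b(x,D)-c(x,D)=\sigma(x,D)-c(x,D)$ is $\mathcal{S}_{\mu+\nu-1}$--regularizing, proving the theorem. The main obstacle is exactly the remainder estimate of the third step: balancing the growing number of integrations by parts against the Gevrey--type factorial growth of the derivatives of $a,b$ and the sub--exponential weights $e^{c|\cdot|^{1/s}}$, in the same delicate way as in the proof of Theorem \ref{reduction}.
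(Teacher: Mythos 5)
Your proposal is correct in substance but takes a genuinely different route from the paper. You compute the composed symbol explicitly as the oscillatory integral $\sigma(x,\xi)=\iint e^{i\langle x-y,\eta\rangle}a(x,\xi+\eta)b(y,\xi)\,dy\,\dslash\eta$, Taylor expand $a(x,\xi+\eta)$ in $\eta$, and then undertake to prove the remainder estimate uniformly in $N$ by reproducing the dyadic partition of unity, calibrated integrations by parts, and the subadditivity splitting $|y|^{1/s}\le|x|^{1/s}+|x-y|^{1/s}$ --- in effect re-deriving the hard part of Theorem~\ref{reduction} in the special case of the amplitude $a(x,\xi+\eta)b(y,\xi)$. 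The paper instead avoids all of this by a transposition trick: it writes $B={}^t\!({}^t\!B)$ and invokes Proposition~\ref{transpose} (itself a corollary of Theorem~\ref{reduction}) to replace $B$, modulo an $\mathcal S_{\mu+\nu-1}$-regularizing error, by a right-quantized operator $B_1$ with amplitude $b_1(y,\xi)$; the composition $AB_1$ then visibly has amplitude $a(x,\xi)b_1(y,\xi)\in\Pi^{\tau+\tau',\infty}_{\mu,\nu,s}(\R^{3n})$, and a single further application of Theorem~\ref{reduction} finishes the proof, with Remark~\ref{refcomp} giving the stated expansion after the two formal sums are composed. So both arguments hinge on the same quantitative input (the remainder estimates underlying Theorem~\ref{reduction}); the paper's route packages that input once and reuses it abstractly, while yours re-runs it explicitly on $\sigma$. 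Your route has the advantage of exhibiting the composed symbol directly and of not needing Propositions~\ref{transpose} and the characterization of $\Pi$-amplitudes, at the cost of a substantially longer and more delicate estimate; the paper's route is shorter but leans on the already-built amplitude calculus. Two small points worth fixing in a write-up: (i) you should state explicitly, not just parenthetically, that Proposition~\ref{regas} applies with general finite order $\tau+\tau'$ in place of $0$ (the infimum argument in its proof is indeed insensitive to the polynomial weight in $\xi$); (ii) the assertion that $\sigma\in\textbf{\textrm{SG}}^{\tau+\tau',\infty}_{\mu,\nu,s}(\R^{2n})$ does not follow merely from the $N=1$ remainder estimate --- it needs the $N=0$ bound on $\sigma$ itself, which is a separate (easier) instance of the same oscillatory-integral estimate and should be recorded.
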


\begin{proof}
We can write $B= \, ^t \hskip-1pt (^tB).$ Then, by Theorem \ref{reduction} and Proposition \ref{transpose}, $B=B_1+ R_1,$ where $R_1$ is $\mathcal{S}_{\mu+\nu-1}$-regularizing and 
\begin{equation}
\label{passo}
B_1u(x) =  \int_{\R^{2n}} e^{i \langle x-y, \xi \rangle} b_1(y,\xi)u(y)dy \dslash \xi
\end{equation}
with $b_1(y,\xi) \in \gam, b_1(y,\xi) \sim \sum\limits_{\alpha} (\alpha!)^{-1} \partial_{\xi}^{\alpha}D_y^{\alpha}b(y, -\xi).$
From \eqref{passo} it follows that
$$\widehat{B_1u}(\xi) = \int_{\R^n} e^{-i \langle y, \xi \rangle} b_1(y,\xi) u(y)dy,$$
from which we deduce that
$$AB u(x) = \int_{\R^{2n}} e^{i\langle x-y,\xi \rangle} a(x,\xi)b_1(y,\xi)u(y)dy \dslash \xi +AR_1u(x).$$
We observe that $a(x,\xi)b_1(y,\xi) \in \Pi^{\tau+\tau',\infty}_{\mu,\nu,s}(\R^{3n}),$ then we may apply Theorem \ref{reduction} and obtain that
$$ABu(x)= c(x,D)u(x)+Ru(x)$$ wher $R$ is $\mathcal{S}_{\mu+\nu-1}$-regularizing and $c \in \textbf{\textrm{SG}}^{\tau+\tau',\infty}_{\mu,\nu ,s}(\R^n)$ with 
$$c(x,\xi) \sim \sum_{\alpha}(\alpha !)^{-1} \partial_{\xi}^{\alpha}a(x,\xi) D_x^{\alpha} b(x,\xi).$$
\end{proof}

\begin{Rem} \label{refcomp}
It is easy to prove that if $a\in \textbf{\textrm{SG}}^{\tau,\infty}_{\mu,\nu,s}(\R^{2n}), b \in \textbf{\textrm{SG}}_{\mu,\nu,s}^{\tau',\infty}(\R^{2n})$ with $a \sim \sum\limits_{j \geq 0}a_j $ in $FS^{\tau,\infty}_{\mu,\nu,s}(\R^{2n})$ and $b \sim \sum\limits_{j \geq 0}b_j $ in $FS^{\tau',\infty}_{\mu,\nu,s}(\R^{2n})$,
then $a(x,D)b(x,D) = c(x,D) +R(x,D)$ where $R$ is $\mathcal{S}_{\mu+\nu-1}$-regularizing and $$c(x,\xi) \sim \sum_{j \geq 0} \sum_{|\alpha|+h+k=j} (\alpha!)^{-1}
\partial_\xi^\alpha a_h(x,\xi) D_x^\alpha b_k(x,\xi) \qquad \textrm{in} \qquad  FS^{\tau+\tau', \infty}_{\mu,\nu,s}(\R^{2n}).$$
\end{Rem}

\begin{Rem}
We remark that replacing the condition $s >\mu+\nu-1$ by $\theta >\mu+\nu-1$ (resp. by $\min \{s, \theta \} >\mu+\nu-1$), analogous versions of Theorems \ref{reduction}, \ref{transpose},\ref{composition} can be formulated and proved for the class  $\textrm{\textbf{SG}}_{\mu,\nu,\theta}^{\infty, \tau}(\R^{2n})$ (resp. $\textrm{\textbf{SG}}_{\mu,\nu,s, \theta}^{\infty}(\R^{2n})$). Notice that if  $\min \{s, \theta \} >\mu+\nu-1$, then the remainder terms in Proposition \ref{regas} and in Theorems \ref{reduction}, \ref{transpose},\ref{composition} are in turn $(\mathscr{S}_s^\theta)$-regularizing, that is they map $(\mathscr{S}_s^\theta)'(\R^n)$ into $\mathscr{S}_s^\theta (\R^n)$, since $\mathcal{S}_{\mu+\nu-1} \subset \mathscr{S}_s^\theta (\R^n)$.  We do not give the proof of these parallel results since they follow the same arguments as in the proofs of Theorems \ref{reduction}, \ref{transpose},\ref{composition}. On the other hand we can use them to prove the following conjugation theorems. 
\end{Rem}

\begin{Prop} \label{conjxi}
Let $p \in \textrm{\textbf{SG}}^m_{\mu,\nu} (\R^{2n}),$ $m=(m_1,m_2)\in\R^2.$ Then for every $\rho \in \R$ and for every $\theta >\mu+\nu-1$ the operator 
$$p_{1,\rho}(x,D) = e^{\rho \pd^{\frac{1}{\theta}}}\circ p(x,D) \circ  e^{-\rho \pd^{\frac{1}{\theta}}} = p(x,D) + q_\rho (x,D)+ \tilde{r}_1(x,D),$$
for some $q_\rho \in \textrm{\textbf {SG}}_\nu^{(m_1-1+1/\theta, m_2-1)} (\R^{2n})$ and $ \tilde{r}_1 \in \mathcal{S}_{\mu+\nu-1}(\R^{2n}).$
\end{Prop}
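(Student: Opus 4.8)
The plan is to treat the conjugation $e^{\rho \pd^{1/\theta}} \circ p(x,D) \circ e^{-\rho \pd^{1/\theta}}$ by first identifying each factor as a pseudodifferential operator in one of the classes introduced in this section and then applying the composition calculus. By part ii) of Proposition \ref{appaelambda}, since $\rho \pxi^{1/\theta} \in \textbf{\textrm{SG}}^{(1/\theta,0)}_\mu(\R^{2n})$, the symbols $e^{\pm \rho \pxi^{1/\theta}}$ belong to $\textbf{\textrm{SG}}^{\infty,0}_{\mu,\theta}(\R^{2n})$; moreover $p \in \textbf{\textrm{SG}}^m_{\mu,\nu}(\R^{2n}) \subset \textbf{\textrm{SG}}^{\infty,m_2}_{\mu,\nu,\theta}(\R^{2n})$. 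Since $\theta > \mu+\nu-1$, the analogue of Theorem \ref{composition} for the class $\textbf{\textrm{SG}}^{\infty,\tau}_{\mu,\nu,\theta}(\R^{2n})$ (as noted in the Remark preceding the statement) applies, so we may compose the three operators two at a time, obtaining an operator $p_{1,\rho}(x,D) + \tilde r_1(x,D)$ with $p_{1,\rho} \in \textbf{\textrm{SG}}^{\infty,m_2}_{\mu,\nu,\theta}(\R^{2n})$ and $\tilde r_1 \in \mathcal{S}_{\mu+\nu-1}(\R^{2n})$, with an asymptotic expansion in $FS^{\infty,m_2}_{\mu,\nu,\theta}(\R^{2n})$ given by iterating the Leibniz-type formula of Theorem \ref{composition}.

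The second step is to extract the first two terms of that expansion explicitly and show they reconstruct $p$ plus a symbol of the claimed (finite) order. Composing $p(x,D)$ with $e^{-\rho\pd^{1/\theta}}$ first gives a symbol asymptotic to $\sum_\alpha (\alpha!)^{-1}\partial_\xi^\alpha p(x,\xi)\, D_x^\alpha e^{-\rho\pxi^{1/\theta}} = p(x,\xi)e^{-\rho\pxi^{1/\theta}}$, since $e^{-\rho\pxi^{1/\theta}}$ is independent of $x$ and only the $\alpha=0$ term survives. Then composing $e^{\rho\pd^{1/\theta}}$ on the left with this symbol $c(x,\xi)$ produces $\sum_\alpha(\alpha!)^{-1}\partial_\xi^\alpha e^{\rho\pxi^{1/\theta}}\, D_x^\alpha c(x,\xi)$. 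Writing $c = p\cdot e^{-\rho\pxi^{1/\theta}}$ and expanding the Leibniz products, one sees that the $\alpha=0$ term gives back $p$, while the $|\alpha|=1$ terms contribute $q_\rho(x,\xi) = \sum_{|\alpha|=1}\partial_\xi^\alpha p(x,\xi)\cdot(\rho e^{\rho\pxi^{1/\theta}})D_x^\alpha(e^{-\rho\pxi^{1/\theta}})$-type expressions; the key observation is that each derivative $\partial_\xi$ hitting $e^{\pm\rho\pxi^{1/\theta}}$ produces a factor behaving like $\pxi^{1/\theta-1}$ together with the compensating exponential, so after cancelling $e^{\rho\pxi^{1/\theta}}e^{-\rho\pxi^{1/\theta}}=1$ the term $q_\rho$ has symbol order $(m_1 - 1 + 1/\theta, m_2)$ in $x$ and $\xi$, i.e.\ $q_\rho\in\textbf{\textrm{SG}}^{(m_1-1+1/\theta,m_2)}_\nu(\R^{2n})$ (the $\mu$-regularity being absorbed since $\mu\le\nu$ can be assumed, or stated as $\textbf{\textrm{SG}}_{\mu,\nu}$). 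All higher-order terms $|\alpha|\ge 2$ contribute to a symbol of order at most $(m_1-2+2/\theta,m_2)$ hence, together with the regularizing remainder from the calculus, can be re-absorbed: but since the statement only records the first correction $q_\rho$, I would simply lump the tail $\sum_{|\alpha|\ge 2}$ into a symbol of the same type as $q_\rho$ (it has strictly lower order) or note it separately.

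The main obstacle is the bookkeeping of the exponential weights through the asymptotic expansion: one must verify that the formal sum $\sum_\alpha (\alpha!)^{-1}\partial_\xi^\alpha e^{\rho\pxi^{1/\theta}} D_x^\alpha(p\, e^{-\rho\pxi^{1/\theta}})$ genuinely belongs to $FS^{\infty,m_2}_{\mu,\nu,\theta}(\R^{2n})$, i.e.\ that the $j$-th term is controlled on $Q^e_{Bj^{\mu+\nu-1}}$ with the correct $(j!)^{\mu+\nu-1}$ and $\pxi^{-j}$ gains, using the estimates $|\partial_\xi^\gamma e^{\pm\rho\pxi^{1/\theta}}|\le C^{|\gamma|+1}\gamma!^\mu\pxi^{-|\gamma|}e^{\pm\rho\pxi^{1/\theta}}\cdot(\text{polynomial in }\pxi^{1/\theta})$ from Proposition \ref{appaelambda} (or its proof). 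Once this is checked, Proposition \ref{sviluppo} (in the $\textbf{\textrm{SG}}^{\infty,\tau}_{\mu,\nu,\theta}$ version) produces a genuine symbol realizing the expansion, and the difference between that realization and $p+q_\rho$ is a symbol asymptotic to zero in $FS^{(m_1-2+2/\theta,m_2)}_\nu$, hence, by the $\textbf{\textrm{SG}}_{\mu,\nu}$-analogue of Proposition \ref{regas}, it differs from a symbol of order $(m_1-2+2/\theta,m_2)$ — in any case of lower order than $q_\rho$ — modulo an $\mathcal{S}_{\mu+\nu-1}$-regularizing operator $\tilde r_1$. This yields the decomposition in the statement. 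I would leave the detailed factorial estimates to the reader, as is done elsewhere in this Appendix, and emphasize only the cancellation $e^{\rho\pxi^{1/\theta}}e^{-\rho\pxi^{1/\theta}}=1$ and the resulting order count as the substantive points.
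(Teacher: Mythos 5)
Your overall strategy --- place the factors in the infinite--order classes, compose, and extract the leading terms --- is the same as the paper's, but the execution contains a concrete error in the very step you highlight as the substantive one.

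When you compose $e^{\rho\pd^{1/\theta}}$ on the left with $c(x,\xi)=p(x,\xi)e^{-\rho\pxi^{1/\theta}}$, the composition formula puts $\partial_\xi^\alpha$ on the \emph{left} symbol and $D_x^\alpha$ on the \emph{right} one, giving
$\sum_\alpha(\alpha!)^{-1}\,\partial_\xi^\alpha e^{\rho\pxi^{1/\theta}}\cdot D_x^\alpha\!\bigl(p(x,\xi)e^{-\rho\pxi^{1/\theta}}\bigr)
=\sum_\alpha(\alpha!)^{-1}\,e^{-\rho\pxi^{1/\theta}}\partial_\xi^\alpha e^{\rho\pxi^{1/\theta}}\cdot D_x^\alpha p(x,\xi)$
(since the exponential is $x$--independent). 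Your displayed expression has them the other way round: you write $\partial_\xi^\alpha p(x,\xi)\cdot\rho e^{\rho\pxi^{1/\theta}}D_x^\alpha(e^{-\rho\pxi^{1/\theta}})$, and of course $D_x^\alpha e^{-\rho\pxi^{1/\theta}}\equiv0$ for $\alpha\neq0$, so taken literally this term vanishes. More to the point, the misplacement hides where the second order gain comes from: it is the $D_x^\alpha p$ factor that costs one power of $\px$, while the factor $e^{-\rho\pxi^{1/\theta}}\partial_\xi^\alpha e^{\rho\pxi^{1/\theta}}$ (via the Fa\`a di Bruno expansion used in the paper) gives $\pxi^{1/\theta-|\alpha|}$ at worst $\pxi^{-(1-1/\theta)|\alpha|}$. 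Thus the $j=1$ piece lies in $\textbf{\textrm{SG}}_{\mu,\nu}^{(m_1-1+1/\theta,\;m_2-1)}$, not $\textbf{\textrm{SG}}_\nu^{(m_1-1+1/\theta,\;m_2)}$ as you assert. Your proposal therefore does not establish the $m_2-1$ drop that the statement claims, which is exactly the feature the proof must deliver.

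A second, less visible, gap: having the formal sum $\sum_{j\geq1}q_{\rho,j}$ in $FS^{\infty,m_2}_{\mu,\nu,\theta}$ does not by itself produce a \emph{finite--order} symbol; realizing it via the version of Proposition \ref{sviluppo} for that class only gives an element of $\textbf{\textrm{SG}}^{\infty,m_2}_{\mu,\nu,\theta}$. To land in $\textbf{\textrm{SG}}_{\mu,\nu}^{(m_1-1+1/\theta,m_2-1)}$ one must exploit that each term gains $\pxi^{-(1-1/\theta)j}\px^{-j}$, and the paper does this by repeating the proof of Proposition \ref{sviluppo} with the modified regions $Q^\theta_{Bj^{\mu+\nu-1}}=\{\pxi^{1-1/\theta}<Bj^{\mu+\nu-1},\ \px<Bj^{\mu+\nu-1}\}$. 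This adjustment is what converts the infinite--order formal sum into the claimed finite--order symbol; without it your argument only gives $q_\rho$ in an infinite--order class.
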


\begin{proof}
First of all we observe that by Proposition \ref{appaelambda} we have 
that $ e^{\rho \pxi^{\frac{1}{\theta}}}$ and $p(x,\xi) e^{-\rho \pxi^{\frac{1}{\theta}}}$ both belong to $\textrm{\textbf{SG}}^{\infty,m_2}_{\mu,\nu,\theta} (\R^{2n})$. Then the symbol of the operator $p_{1,\rho}(x,D)$
is such that 
$$p_{1,\rho}(x,\xi)=p(x,\xi)+ q_\rho(x,\xi)+ \tilde{r}_1(x,D)$$ with $\tilde{r}_1 \in \mathcal{S}_{\mu+\nu-1}(\R^{2n})$ and  $q_\rho \sim \sum\limits_{j \geq 1} q_{\rho, j}$ in $FS^{\infty, m_2}_{\mu,\nu,\theta}(\R^{2n})$, where
\begin{eqnarray} \label{sviluppoconiugazionexi}
q_{\rho,j}(x,\xi) &= &  \sum_{|\alpha|=j} (\alpha!)^{-1} \partial_\xi^\alpha e^{\rho \pxi^{\frac{1}{\theta}}}D_x^\alpha \left(p(x,\xi) e^{-\rho \pxi^{\frac{1}{\theta}}}\right) \\
 \nonumber &=&  \sum_{|\alpha|=j} (\alpha!)^{-1} e^{-\rho \pxi^{\frac{1}{\theta}}} \partial_\xi^\alpha e^{\rho \pxi^{\frac{1}{\theta}}}D_x^\alpha p(x,\xi). 
\end{eqnarray}
Estimating by Fa\`a di Bruno formula  we obtain 
that 
$$e^{-\rho \pxi^{\frac{1}{\theta}}} \partial_\xi^\alpha e^{\rho \pxi^{\frac{1}{\theta}}} = \sum_{h=1}^{|\alpha|} \frac{\rho^h}{h!} \sum_{\stackrel{\alpha_1+\ldots +\alpha_h =\alpha}{\alpha_i \neq 0}} \frac{\alpha!}{\alpha_1! \ldots \alpha_h!} \prod_{\mu=1}^h \partial_\xi^{\alpha_\mu} \pxi^{\frac{1}{\theta}}$$
from which it follows that
\begin{eqnarray*}
|q_{\rho,j}(x,\xi)|& \leq &   \sum_{|\alpha|=j} \sum_{h=1}^{|\alpha|} \frac{\rho^h}{h!} \sum_{\stackrel{\alpha_1+\ldots +\alpha_h =\alpha}{\alpha_i \neq 0}} C^{|\alpha|+1} \alpha!^\nu
(\alpha_1! \ldots \alpha_h! )^{\mu-1} \pxi^{ m_1+\frac{h}{\theta}-|\alpha|} \px^{m_2-|\alpha|}
\\
& \leq &
 C_1^{2j+1}(j!)^{\mu+\nu-1} \pxi^{m_1-\left(1-\frac{1}{\theta} \right)j} \px^{m_2-j}.
\end{eqnarray*}
Similarly we can estimate the derivatives of $q_{\rho,j}$ and we obtain that 
$$|\partial_\xi^\gamma \partial_x^\delta q_{\rho,j}(x,\xi) | \leq C^{|\gamma|+|\delta|+2j+1} (\gamma!)^\mu(\delta!)^{\nu}(j!)^{\mu+\nu-1} \pxi^{m_1-\left(1-\frac{1}{\theta} \right)j-|\gamma|} \px^{m_2-j-|\delta|}, \qquad j \geq 1,$$
for some positive constant $C$ independent of $j, \gamma, \delta.$
\\
 Then we can repeat readily the argument of the proof of Proposition \ref{sviluppo} by replacing $Q_{Bj^{\mu+\nu-1}}$ by the set
$$Q^\theta_{Bj^{\mu+\nu-1}}= \{ (x,\xi) \in \R^{2n}: \pxi^{1-\frac{1}{\theta}} < Bj^{\mu+\nu-1} \quad \textit{and} \quad \px < Bj^{\mu+\nu-1} \}$$ and $Q_{Bj^{\mu+\nu-1}}^e$ by 
$Q_{Bj^{\mu+\nu-1}}^{\theta,e}= \R^{2n} \setminus Q^\theta_{Bj^{\mu+\nu-1}}.$
Then we obtain that $q_\rho \in\textbf{\textrm{SG}}_{\mu,\nu}^{(m_1+1/\theta-1,m_2-1)}(\R^{2n})$. Details are left to the reader. 
\end{proof}

\begin{Prop} \label{congx}
Let $p \in \textrm{\textbf{SG}}^m_{\mu,\nu} (\R^{2n})$ for some $m \in \R^2$. Then for every $\rho \in \R$ and for every $s >\mu+\nu-1$ the operator 
$$p_{2,\rho}(x,D) = e^{\rho \px^{\frac{1}{s}}} \circ p(x,D) \circ e^{-\rho \px^{\frac{1}{s}}} = p(x,D) + r_\rho (x,D)+ \tilde{r}_2(x,D),$$
for some $r_\rho \in \textrm{\textbf {SG}}_{\nu}^{(m_1-1, m_2-1+1/s)} (\R^{2n})$ and $\tilde{r}_{2} \in \mathcal{S}_{\mu+\nu-1}(\R^{2n})$.
\end{Prop}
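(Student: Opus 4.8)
The plan is to run, mutatis mutandis, the proof of Proposition \ref{conjxi}, interchanging the roles of $x$ and $\xi$ (and of $s$ and $\theta$) and carrying out the calculus in the classes $\textbf{\textrm{SG}}^{\tau,\infty}_{\mu,\nu,s}(\R^{2n})$ of symbols of infinite order in $x$ instead of $\textbf{\textrm{SG}}^{\infty,\tau}_{\mu,\nu,\theta}(\R^{2n})$. First I would observe that $e^{\pm\rho\px^{1/s}}$ does not depend on $\xi$, so $e^{\rho\px^{1/s}}(x,D)$ is simply the multiplication operator by $e^{\rho\px^{1/s}}$ and is trivially invertible with inverse $e^{-\rho\px^{1/s}}$; moreover, by Proposition \ref{appaelambda}(i), $e^{\pm\rho\px^{1/s}}\in\textbf{\textrm{SG}}^{0,\infty}_{\mu,s}(\R^{2n})$ and hence $e^{\rho\px^{1/s}}p(x,\xi)\in\textbf{\textrm{SG}}^{m_1,\infty}_{\mu,\nu,s}(\R^{2n})$. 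Writing $p_{2,\rho}(x,D)=\big(e^{\rho\px^{1/s}}p(x,D)\big)\circ e^{-\rho\px^{1/s}}(x,D)$, where the first factor has symbol precisely $e^{\rho\px^{1/s}}p(x,\xi)$, I would apply Theorem \ref{composition} (legitimate since $s>\mu+\nu-1$) to get $p_{2,\rho}(x,D)=c(x,D)+\tilde r_2(x,D)$ with $\tilde r_2\in\mathcal{S}_{\mu+\nu-1}(\R^{2n})$ and $c\sim\sum_{j\geq 0}r_{\rho,j}$ in $FS^{m_1,\infty}_{\mu,\nu,s}(\R^{2n})$, where, since $\partial_\xi$ does not reach the exponential,
$$r_{\rho,j}(x,\xi)=\sum_{|\alpha|=j}(\alpha!)^{-1}\big(\partial_\xi^\alpha p(x,\xi)\big)\,e^{\rho\px^{1/s}}D_x^\alpha e^{-\rho\px^{1/s}}.$$
The term $j=0$ equals $p(x,\xi)$, so it remains to analyse $r_\rho:=c-p\sim\sum_{j\geq 1}r_{\rho,j}$.

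Next I would estimate the factors $e^{\rho\px^{1/s}}D_x^\alpha e^{-\rho\px^{1/s}}$ by the Fa\`a di Bruno formula, exactly as $e^{-\rho\pxi^{1/\theta}}\partial_\xi^\alpha e^{\rho\pxi^{1/\theta}}$ was treated in \eqref{sviluppoconiugazionexi}: one writes
$$e^{\rho\px^{1/s}}\partial_x^\alpha e^{-\rho\px^{1/s}}=\sum_{h=1}^{|\alpha|}\frac{(-\rho)^h}{h!}\sum_{\stackrel{\alpha_1+\cdots+\alpha_h=\alpha}{\alpha_l\neq 0}}\frac{\alpha!}{\alpha_1!\cdots\alpha_h!}\prod_{l=1}^h\partial_x^{\alpha_l}\px^{1/s},$$
and by \eqref{KB1} each block contributes a factor $\px^{1/s-|\alpha_l|}$, so the $h$-fold product is $O(\px^{h/s-|\alpha|})$, the worst exponent being $\px^{-(1-1/s)|\alpha|}$ at $h=|\alpha|$. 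Together with $|\partial_\xi^{\alpha+\gamma}\partial_x^{\delta'}p|\leq C^{\cdots}((\alpha+\gamma)!)^\mu(\delta'!)^\nu\pxi^{m_1-|\alpha|-|\gamma|}\px^{m_2-|\delta'|}$ and the Leibniz rule for the $x$-derivatives, this will yield, for all $\gamma,\delta\in\N^n$ and $j\geq 1$,
$$|\partial_\xi^\gamma\partial_x^\delta r_{\rho,j}(x,\xi)|\leq C^{|\gamma|+|\delta|+2j+1}(\gamma!)^\mu(\delta!)^\nu(j!)^{\mu+\nu-1}\pxi^{m_1-j-|\gamma|}\px^{m_2-(1-1/s)j-|\delta|},$$
with $C$ independent of $j,\gamma,\delta$.

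Finally, these bounds say that $\sum_{j\geq 1}r_{\rho,j}$ is a formal sum of exactly the type handled in Proposition \ref{conjxi}, with the gain now carried by $x$. I would then repeat the argument of Proposition \ref{sviluppo} with $Q_{Bj^{\mu+\nu-1}}$ replaced by $Q^s_{Bj^{\mu+\nu-1}}=\{(x,\xi)\in\R^{2n}:\pxi<Bj^{\mu+\nu-1}\ \textit{and}\ \px^{1-1/s}<Bj^{\mu+\nu-1}\}$ (and $Q^e$ correspondingly), reconstructing $r_\rho$ modulo an $\mathcal{S}_{\mu+\nu-1}$-regularizing term and concluding $r_\rho\in\textbf{\textrm{SG}}_{\mu,\nu}^{(m_1-1,\,m_2-1+1/s)}(\R^{2n})\subset\textbf{\textrm{SG}}_{\nu}^{(m_1-1,\,m_2-1+1/s)}(\R^{2n})$, which is the assertion. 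The one genuinely substantive point — everything else being the routine factorial bookkeeping of the resummation, which can be left to the reader — is, as in Proposition \ref{conjxi}, to verify that the exponential factors $e^{\pm\rho\px^{1/s}}$, which separately force infinite order in $x$, cancel so as to leave a finite-order symbol whose $j$-th homogeneous term gains precisely one power of $\px^{-(1-1/s)}$.
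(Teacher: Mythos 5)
Your proposal is correct and follows precisely the route the paper takes: write $p_{2,\rho}(x,D)$ as the composition of multiplication by $e^{\rho\px^{1/s}}p(x,\xi)\in\textbf{\textrm{SG}}^{m_1,\infty}_{\mu,\nu,s}(\R^{2n})$ with $e^{-\rho\px^{1/s}}$, invoke Theorem \ref{composition} to get the asymptotic expansion with leading term $p$, estimate $e^{\rho\px^{1/s}}D_x^\alpha e^{-\rho\px^{1/s}}$ by Fa\`a di Bruno and \eqref{KB1}, and resum via the variant of Proposition \ref{sviluppo} with $\px^{1-1/s}$ replacing $\pxi^{1-1/\theta}$ in the cut-off regions. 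The paper simply states "proceed as in Proposition \ref{conjxi} by interchanging the roles of $x$ and $\xi$" and leaves the details to the reader; you have filled in exactly those details.
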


\begin{proof}
We observe that the symbols $e^{\rho \px^{\frac{1}{s}}} p(x,\xi)$ and $e^{-\rho \px^{\frac{1}{s}}} $ are both in $\textrm{\textbf{SG}}^{m_1,\infty}_{\mu,\nu,s}(\R^{2n})$ and that, by Theorem \ref{composition} the symbol of the composed operator is, modulo terms in $\mathcal{S}_{\mu+\nu-1}(\R^{2n}),$ the sum of $p(x,\xi)$ and of a symbol $r_\rho(x,\xi)$ such that
\begin{eqnarray*} r_\rho(x,\xi) & \sim & \sum_{j \geq 1} \sum_{|\alpha|=j} (\alpha)^{-1}\partial_\xi^\alpha \left( e^{\rho  \px^{\frac{1}{s}}}p(x,\xi) \right) D_x^\alpha  e^{-\rho  \px^{\frac{1}{s}}} \\
&=& \sum_{j \geq 1} \sum_{|\alpha|=j} (\alpha)^{-1} \partial_\xi^\alpha p(x,\xi)  e^{\rho  \px^{\frac{1}{s}}} D_x^\alpha  e^{-\rho  \px^{\frac{1}{s}}} \qquad \textrm{in} \quad FS^{m_1,\infty}_{\mu,\nu,s}(\R^{2n}).
\end{eqnarray*}
Hence we can proceed as in the proof of Proposition \ref{conjxi} by simply interchanging the roles of $x$ and $\xi$. We leave the details to the reader.
\end{proof}

By combination of the previous two propositions we obtain the following continuity result.
 
\begin{Th} \label{doubleconj}
Let $p \in \textrm{\textbf{SG}}^{m'}_{\mu,\nu}(\R^{2n})$ for some $m' =(m'_1,m'_2) \in \R^2$. Then, for every $m, \rho \in \R^2$ and $s, \theta$ such that $\min\{s,\theta\}>\mu+\nu-1$ the operator $p(x,D)$ extends to a continuous map from $H^m_{\rho,s,\theta}(\R^n)$ into $H^{m-m'}_{\rho,s, \theta}(\R^n)$.
\end{Th}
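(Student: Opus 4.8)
The plan is to obtain the claimed mapping property $p(x,D)\colon H^m_{\rho,s,\theta}(\R^n)\to H^{m-m'}_{\rho,s,\theta}(\R^n)$ by conjugating $p(x,D)$ with the defining weight $\Pi_{m,\rho,s,\theta}$ and reducing to the already-known continuity of finite-order $\textbf{\textrm{SG}}$ operators on the ordinary weighted Sobolev spaces $H^{m}(\R^n)$. Recall that, by definition, $u\in H^m_{\rho,s,\theta}(\R^n)$ if and only if $\Pi_{m,\rho,s,\theta}u\in L^2(\R^n)$, and more generally $\|u\|_{H^m_{\rho,s,\theta}}=\|\Pi_{m,\rho,s,\theta}u\|$. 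So what has to be shown is that the operator
$$
Q:=\Pi_{m-m',\rho,s,\theta}\,p(x,D)\,\Pi_{m,\rho,s,\theta}^{-1}
$$
is bounded on $L^2(\R^n)$. Since $\Pi_{m,\rho,s,\theta}=\langle\cdot\rangle^{m_2}\langle D\rangle^{m_1}e^{\rho_2\langle\cdot\rangle^{1/s}}e^{\rho_1\langle D\rangle^{1/\theta}}$, a single conjugation of $p(x,D)$ with $\Pi_{m,\rho,s,\theta}$ can be performed in four steps, peeling off one factor at a time, and each step is governed by one of the conjugation propositions already established.

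First I would conjugate with the two exponential factors. By Proposition \ref{conjxi}, conjugating $p(x,D)$ by $e^{\rho_1\langle D\rangle^{1/\theta}}$ (and its inverse) produces $p(x,D)+q_{\rho_1}(x,D)+\tilde r_1(x,D)$ with $q_{\rho_1}\in\textbf{\textrm{SG}}_{\mu,\nu}^{(m'_1-1+1/\theta,\,m'_2-1)}(\R^{2n})\subset\textbf{\textrm{SG}}_{\mu,\nu}^{m'}(\R^{2n})$ (here one uses $1/\theta<1$ and $-1<0$, so orders drop) and $\tilde r_1\in\mathcal S_{\mu+\nu-1}(\R^{2n})$, which is regularizing and hence harmless; this requires exactly $\theta>\mu+\nu-1$. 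Then, applying Proposition \ref{congx} to the resulting finite-order symbol and conjugating by $e^{\rho_2\langle\cdot\rangle^{1/s}}$, one obtains again an operator of the form (finite-order $\textbf{\textrm{SG}}^{m'}_{\mu,\nu}$-symbol) plus (regularizing), using $s>\mu+\nu-1$. The net effect is that $e^{\rho_2\langle\cdot\rangle^{1/s}}e^{\rho_1\langle D\rangle^{1/\theta}}p(x,D)e^{-\rho_1\langle D\rangle^{1/\theta}}e^{-\rho_2\langle\cdot\rangle^{1/s}}=\tilde p(x,D)+R$ with $\tilde p\in\textbf{\textrm{SG}}^{m'}_{\mu,\nu}(\R^{2n})$ and $R$ an $\mathcal S_{\mu+\nu-1}$-regularizing operator; the latter is in particular bounded from $H^{m}(\R^n)$ to $H^{m-m'}(\R^n)$ (indeed to any $H^{m''}(\R^n)$), so it may be discarded. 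Finally the remaining conjugation is by the classical multipliers $\langle\cdot\rangle^{m_2}$ and $\langle D\rangle^{m_1}$, which together with the order-$m'$ operator $\tilde p(x,D)$ yields an operator whose symbol lies in $\textbf{\textrm{SG}}^{(0,0)}_{\mu,\nu}(\R^{2n})$ (a straightforward $\textbf{\textrm{SG}}$-composition computation, or one simply invokes that $\langle\cdot\rangle^{m_2}\langle D\rangle^{m_1}$ maps $H^{m}$ to $H^{0,0}=L^2$ and $H^{m-m'}$ to $L^2$ appropriately); by the $L^2$-boundedness of $\textbf{\textrm{SG}}^{(0,0)}_{\mu,\nu}$ operators from \cite{Co,Pa} this gives the desired $L^2$-bound for $Q$.

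The only mild subtlety, and the step I expect to require the most care, is bookkeeping the order shifts through the two exponential conjugations and checking they are all non-positive so that one never leaves the class $\textbf{\textrm{SG}}^{m'}_{\mu,\nu}(\R^{2n})$: the first conjugation shifts the $\xi$-order by $1/\theta-1<0$ per unit of expansion index and the $x$-order by $-1$, the second shifts the $x$-order by $1/s-1<0$ and the $\xi$-order by $-1$, so in both cases the principal symbol is unchanged and the remainders have strictly lower order; this is exactly where the hypothesis $\min\{s,\theta\}>\mu+\nu-1$ is consumed (it is needed to apply Propositions \ref{conjxi} and \ref{congx}, equivalently the reduction/composition Theorems \ref{reduction}, \ref{composition} for the auxiliary infinite-order classes). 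Once this is in place, the composition at each stage is routine $\textbf{\textrm{SG}}$-calculus, the regularizing remainders are absorbed, and the conclusion follows. One then notes the result is stated for the extension of $p(x,D)$ to these Sobolev spaces, which is legitimate since $\mathscr S(\R^n)$ (indeed $\mathcal S^\theta_s(\R^n)$) is dense in $H^m_{\rho,s,\theta}(\R^n)$ and $Q$ is already $L^2$-bounded, so $p(x,D)=\Pi_{m-m',\rho,s,\theta}^{-1}\,Q\,\Pi_{m,\rho,s,\theta}$ extends continuously as claimed.
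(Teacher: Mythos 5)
Your proposal matches the paper's proof: both conjugate $p(x,D)$ by $e^{\rho_1\langle D\rangle^{1/\theta}}$ and then by $e^{\rho_2\langle\cdot\rangle^{1/s}}$ via Propositions \ref{conjxi} and \ref{congx}, observe that the result is again a finite-order $\textbf{\textrm{SG}}^{m'}_{\mu,\nu}$-operator plus an $\mathcal S_{\mu+\nu-1}$-regularizing remainder (here $\min\{s,\theta\}>\mu+\nu-1$ is used), and conclude by the standard $H^m\to H^{m-m'}$ continuity of finite-order $\textbf{\textrm{SG}}$ operators. The only cosmetic difference is that you peel off the power weights $\langle\cdot\rangle^{m_2}\langle D\rangle^{m_1}$ as a separate conjugation step to land on $L^2$, while the paper stops at the $H^m$-norm identity $\|u\|_{H^m_{\rho,s,\theta}}=\|e^{\rho_2\langle\cdot\rangle^{1/s}}e^{\rho_1\langle D\rangle^{1/\theta}}u\|_{H^m}$ and invokes the $H^m\to H^{m-m'}$ mapping property directly; the two are equivalent.
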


\begin{proof}
Applying first Proposition \ref{conjxi}
$$  e^{\rho_1 \pd^{\frac{1}{\theta}}} \circ p(x,D)\circ e^{-\rho_1 \pd^{\frac{1}{\theta}}} =  p(x,D) + q_{\rho_1}(x,D) + \tilde{r_1}(x,D),$$
where $q_{\rho_1} \in  \textrm{\textbf {SG}}_{\mu,\nu}^{(m'_1-1+1/\theta, m'_2-1)} (\R^{2n})$, and $\tilde{r}_1 \in \mathcal{S}_{\mu+\nu-1}(\R^{2n}).$
We now apply Proposition \ref{congx} and we get
$$ e^{\rho_2 \px^{\frac{1}{s}}} \circ ( p(x,D) + q_{\rho_1}(x,D) ) \circ e^{-\rho_2 \px^{\frac{1}{s}}} = p(x,D) +  q_{\rho_1}(x,D) + r_{\rho_2}(x,D)+ q_{\rho_1,\rho_2}(x,D) + \tilde{r}_2(x,D),$$
with $ r_{\rho_2} \in \textrm{\textbf {SG}}_{\mu, \nu}^{(m'_1-1, m'_2-1+1/s)} (\R^{2n})$, $ q_{\rho_1,\rho_2} \in \textrm{\textbf {SG}}_{\mu, \nu}^{(m'_1-2+ 1/\theta, m'_2-2+1/s)} (\R^{2n})$  and $\tilde{r}_2 \in \mathcal{S}_{\mu+\nu-1}(\R^{2n})$.
In particular, we have that the operator $$q_\rho(x,D)= p(x,D) + q_{\rho_1}(x,D) + r_{\rho_2}(x,D)+q_{\rho_1, \rho_2}(x,D)$$ is continuous from $H^m$ into $H^{m-m'}$ for every $m \in \R^2.$ Moreover, since $\min\{s, \theta\} >\mu+\nu-1$ then 
$ \mathcal{S}_{\mu+\nu-1}(\R^{2n}) \subset \mathscr{S}_s^\theta(\R^{2n})$, hence the operators $\tilde{r}_j, j=1,2$ map $(\mathscr{S}_s^\theta)'  (\R^n)$ into $\mathscr{S}_s^\theta (\R^n)$. Hence, the same holds for the operators
$ e^{\rho_2 \px^{\frac{1}{s}}} \circ \tilde{r}_j(x,D) \circ e^{-\rho_2 \px^{\frac{1}{s}}}, j=1,2.$ Hence we have
\begin{eqnarray*}
\|p(x,D) u\|_{H^{m-m'}_{\rho,s, \theta}} &=& \|  e^{\rho_2 \px^{\frac{1}{s}}} e^{\rho_1 \pd^{\frac{1}{\theta}}} \circ p(x,D) \circ e^{-\rho_1 \pd^{\frac{1}{\theta}}}  e^{-\rho_2 \px^{\frac{1}{s}}}(e^{\rho_2 \px^{\frac{1}{s}}} e^{\rho_1 \pd^{\frac{1}{\theta}}} u) \|_{H^{m-m'}}\\ &\leq&
\| q_\rho(x,D) (e^{\rho_2 \px^{\frac{1}{s}}} e^{\rho_1 \pd^{\frac{1}{\theta}}} u)\|_{H^{m-m'}} \\ &&+ \|  e^{\rho_2 \px^{\frac{1}{s}}} \circ \tilde{r}_1(x,D) \circ e^{-\rho_2 \px^{\frac{1}{s}}}(e^{\rho_2 \px^{\frac{1}{s}}} e^{\rho_1 \pd^{\frac{1}{\theta}}} u) \|_{H^{m-m'}}
\\
&&+ \|  \tilde{r}_2(x,D) (e^{\rho_2 \px^{\frac{1}{s}}} e^{\rho_1 \pd^{\frac{1}{\theta}}} u) \|_{H^{m-m'}}
\\
&\leq & C \| e^{\rho_2 \px^{\frac{1}{s}}} e^{\rho_1 \pd^{\frac{1}{\theta}}} u \|_{H^{m}}  = C\| u\|_{H^{m}_{\rho,s, \theta}}.
\end{eqnarray*}

\end{proof}

\begin{Rem}
From Theorem \ref{doubleconj} it follows that any operator with symbol $p \in \textrm{\textbf{SG}}^{m'}_{1,\nu}(\R^{2n})$ with $\nu >1$ extends to a linear continuous map from $H^m_{\rho,s, \theta}$ into $H^{m-m'}_{\rho,s, \theta}$ for every $\rho, m \in \R^2$ and $\min\{s, \theta\}>\nu$. This can be obtained regarding $p$ as a symbol in $\textrm{\textbf{SG}}^{m'}_{\mu,\nu}(\R^{2n})$ for every $\mu>1$ and applying Theorem \ref{doubleconj}. The condition $\min\{s, \theta\} >\mu+\nu-1$ in the theorem then reduces to $\min\{s, \theta\}>\nu$ by choosing $\mu$ arbitrarily close to $1$.
\end{Rem}

We report now the proofs of Propositions \ref{continforder}, \ref{compwithrev} and Theorem \ref{conjugationthm}.
\\

\textit{Proof of Proposition \ref{continforder}.}
We can write, for any $\delta \in \R:$
$$e^\lambda(x,D) u(x) = e^{\delta\px^{\frac{1}{s}}}a(x,D)u(x),$$
where  $a(x,\xi)= e^{-\delta \px^{\frac{1}{s}}+\lambda(x,\xi)}.$ It is easy to verify that if $\delta >C(\lambda)=\sup_{(x,\xi)\in\R^{2n}}\lambda(x,\xi)/\px^{1/s},$ then $a \in \textbf{\textrm{SG}}^{(0,0)}_{\nu}(\R^{2n}),$ and then by Theorem \ref{doubleconj} $a(x,D)$ is bounded on $H^m_{\rho, s, \theta}$ for every $m,\rho \in \R^2$ and for every $\min \{s,\theta\} >2\nu-1$.
Hence we have
$$\| e^\lambda(x,D)u\|_{H^m_{\rho-\delta e_2,s, \theta}} = \| a(x,D) u\|_{H^m_{\rho, s, \theta}} \leq C \| u\|_{H^m_{\rho, s, \theta}}$$
for every $u \in H^m_{\rho, s, \theta}.$ The assertion is then proved.
\qed \\

\textit{Proof of Proposition \ref{compwithrev}.} 
We shall prove the result only for the first composition, the second being similar.
By Proposition \ref{appaelambda} the symbol $e^{\lambda(x,\xi)} \in \textbf{\textrm{SG}}^{0,\infty}_{\mu,s}(\R^{2n})$ whereas the operator  $^R \hskip-1pt e^{- \lambda}$ can be regarded as an operator with amplitude in $\Pi_{\mu,s}^{0, \infty}(\R^{3n})$, hence, by Proposition \ref{reduction}, Theorem \ref{composition} and Remark \ref{refcomp}, we have: 
$$e^{\lambda}(x,D) \circ  ^R \hskip-1pt e^{-\lambda} = I + r_1(x,D)+\tilde{r}_1(x,D), $$ where $\tilde{r}_1 \in \Sigma_s(\R^{2n})$ and $r_1$ has the asymptotic expansion (cf. Definition \ref{equiv}) $r_1\sim \sum\limits_{j \geq 1} r_{1,j}$ with  
\begin{eqnarray} \label{svilid+rem}
r_{1,j} (x,\xi) &=& \sum_{|\alpha|+k = j} \sum_{|\beta|=k}(\alpha!\beta!)^{-1} \partial_\xi^\alpha e^{\lambda(x,\xi)} \cdot D_x^{\alpha +\beta} \partial_\xi^\beta e^{-\lambda(x,\xi)}
\\  \nonumber &=& \sum_{|\gamma|=j} (\gamma!)^{-1}\partial_\xi^\gamma \left[ e^{\lambda(x,\xi)} D_x^\gamma e^{-\lambda(x,\xi)} \right].
\end{eqnarray}
By the results of the calculus, the symbol $r_1(x,\xi)$ turns out to be in $\textbf{\textrm{SG}}^{0,\infty}_{\mu,s}(\R^{2n}).$ To conclude the proof, we need to prove that indeed $r_1$ has finite orders, namely $r_1 \in \textbf{\textrm{SG}}_{\mu}^{(0, -1+1/s)}(\R^{2n}).$
Now, using Fa\`a di Bruno formula, we have $$
e^{\lambda(x,\xi)} D_x^\gamma e^{-\lambda(x,\xi)} = \sum_{h=1}^{|\gamma|} \frac{(-1)^{|\gamma|}}{h!} \sum_{\stackrel{\gamma_1+\ldots +\gamma_h =\gamma}{\gamma_i \neq 0}} \frac{\gamma!}{\gamma_1! \ldots \gamma_h!} \prod_{\kappa=1}^h D_x^{\gamma_\kappa} \lambda(x,\xi) .$$
Hence, Leibniz formula gives
\begin{eqnarray*} 
r_{1,j}(x,\xi) &=& \sum_{|\gamma|=j}\sum_{h=1}^{|\gamma|} \frac{(-1)^{|\gamma|}}{h!} \sum_{\stackrel{\gamma_1+\ldots +\gamma_h =\gamma}{\gamma_i \neq 0}} \sum_{\gamma'_1+\ldots +\gamma'_h =\gamma}\frac{\gamma!}{\gamma_1! \ldots \gamma_h!\gamma'_1! \ldots \gamma'_h!} \\
&& \times \partial_\xi^{\gamma'_1}D_x^{\gamma_1} \lambda(x,\xi)\cdot \ldots \cdot \partial_\xi^{\gamma'_h}D_x^{\gamma_h} \lambda(x,\xi),
\end{eqnarray*} 
from which it follows that
\begin{eqnarray*}
|r_{1,j}(x,\xi)| &\leq &  \sum_{|\gamma|=j} \sum_{h=1}^{|\gamma|} \frac{\gamma!}{h!} \sum_{\stackrel{\gamma_1+\ldots +\gamma_h =\gamma}{\gamma_i \neq 0}} \sum_{\gamma'_1+\ldots +\gamma'_h =\gamma} C^{2|\gamma|}
(\gamma_1! \ldots \gamma_h! \gamma'_1! \ldots \gamma'_h!)^{\mu-1} \px^{\frac{h}{s}-|\gamma|} \pxi^{-|\gamma|} \\ &\leq & C_1^{2j+1}(j!)^{2\mu-1}\px^{-\left(1-\frac{1}{s} \right)j} \pxi_h^{-j}.
\end{eqnarray*}
Similarly we can estimate the derivatives of $r_{1,j}$ and we obtain that 
$$|\partial_\xi^\gamma \partial_x^\delta r_{1,j}(x,\xi) | \leq C^{|\gamma|+|\delta|+2j+1} (\gamma!\delta!)^{\mu}(j!)^{2\mu-1} \pxi^{-j-|\gamma|} \px^{-\left(1-\frac{1}{s} \right)j-|\delta|}, \qquad j \geq 1,$$
for some positive constant $C$ independent of $j, \gamma, \delta.$
\\
Moreover, we observe that $r_{1,0}=0$ in the asymptotic expansion of $r_1$. Then we can repeat readily the argument of the proof of Proposition \ref{sviluppo} by replacing $Q_{Bj^{2\mu-1}}$ by the set
$$Q^s_{Bj^{2\mu-1}}= \{ (x,\xi) \in \R^{2n}: \px^{1-\frac{1}{s}} < Bj^{2\mu-1} \quad \textit{and} \quad \pxi < Bj^{2\mu-1} \}$$ and $Q_{Bj^{2\mu-1}}^e$ by 
$Q_{Bj^{2\mu-1}}^{s,e}= \R^{2n} \setminus Q^s_{Bj^{2\mu-1}}.$
We obtain in this way that $r_1$ is in fact in $\textbf{\textrm{SG}}_{\mu}^{(0,1/s-1)}(\R^{2n})$. Deatils are left to the reader. 
\qed
\\

\textit{Proof of Theorem \ref{conjugationthm}.}
First of all we observe that 
$$(e^{\lambda}(x,D))^{-1} = ^R \hskip-1pt e^{-\lambda} \sum_{j \geq 0 }(-r_1(x,D))^j= ^R \hskip-1pt e^{-\lambda} \circ (I -r_{1,1}(x,D)+s(x,D)),$$
where $s \in \textbf{\textrm{SG}}^{(0,2(1/s-1))}(\R^{2n}).$ 
By Theorem \ref{composition} we have that the symbol of $e^{\lambda}(x,D) a(x,D) ^R \hskip-1pt e^{-\lambda}$ has the asymptotic expansion $\sum_{j \geq 0} a_j(x,\xi)$ where
\begin{eqnarray*}
a_j(x,\xi) &=& \sum_{j \geq 0} \sum_{|\alpha|+h+k=j} \sum_{|\gamma|=h} \sum_{|\delta|=k} (\alpha! \gamma! \delta!)^{-1}\partial_\xi^{\alpha}\left[\partial_\xi^\gamma e^{\lambda(x,\xi)} \cdot D_x^\gamma a(x,\xi) \right]  \partial_\xi^\delta D_x^{\alpha+\delta} e^{-\lambda(x,\xi)} \\
&=& \sum_{|\beta|+h=j} (\beta!)^{-1} \sum_{|\gamma|=h}\partial_\xi^\beta \left[ (\gamma!)^{-1} \partial_\xi^\gamma e^{\lambda(x,\xi)}D_x^\gamma a(x,\xi) D_x^\beta e^{-\lambda(x,\xi)}\right]. 
\end{eqnarray*}
Observe that $a_0=a$ and $a_1(x,\xi)= q(x,\xi)-\sum_{\ell=1}^n a(x,\xi)(\partial_{\xi_\ell}D_{x_\ell} \lambda)(x,\xi)$, with $q$ as in \eqref{qexpression}. 
Using Fa\`a di Bruno formula  and Leibniz formula and arguing as in the proof of Proposition \ref{compwithrev} it is easy to prove that for $j \geq 2$:
$$|\partial_\xi^\gamma \partial_x^\delta a_j(x,\xi)| \leq C^{|\gamma|+|\delta+2j} (\gamma!\delta!)^\nu (j!)^{2\nu-1} \pxi^{m_1-|\gamma|-j} \px^{m_2-\left(1-\frac{1}{s} \right)j-|\delta|}$$
for some positive constant $C$.
Let us now consider the composition of $e^{\lambda}(x,D) a(x,D) ^R \hskip-1pt e^{-\lambda}(x,D)$ with $-r_{1,1}(x,D)$. It is easy to notice that the leading term in $-r_{1,1}(x,\xi)$ is $\sum_{\ell=1}^n \partial_{\xi_\ell}D_{x_\ell} \lambda(x,\xi).$ Then it is easy to verify that 
$$e^{\lambda}(x,D) a(x,D) ^R \hskip-1pt e^{-\lambda}(x,D) \circ r_{1,1}(x,D) = \textrm{op} \left( \sum_{\ell=1}^n a(x,\xi)(\partial_{\xi_\ell}D_{x_\ell} \lambda)(x,\xi)\right) + r(x,D)$$
with $r \in \textbf{\textrm{SG}}^{(m_1-2,m_2+2(1/s-1))}(\R^{2n}).$
Summing up, the symbol of  $e^{\lambda}(x,D) a(x,D) (e^{\lambda}(x,D))^{-1} $ is given, modulo terms in $\textbf{\textrm{SG}}^{\left(m_1 -2, m_2-2(1-1/s)\right) }(\R^{2n}),$ by $a(x,\xi)+q(x,\xi)$ with $q$ as in \eqref{qexpression}.
\qed

\vskip1cm

\noindent
\textbf{Acknowledgements.}
The authors would like to thank the referee for his remarks and suggestions thanks to which the quality of the paper has considerably improved. \\
The authors have been supported in the preparation of the paper by
 INdAM-GNAMPA grants 
2015, 2016 and 2017.


\end{document}